\documentclass[%
  a4paper,
  onecolumn,
  colorlinks,
]{preprint}


\usepackage[noadjust]{cite}

\usepackage[english]{babel}

\usepackage{amsmath}
\usepackage{amssymb}
\usepackage{amsthm}
\usepackage{mathtools}
\usepackage{dsfont}

\usepackage[linesnumbered, lined, algoruled]{algorithm2e}

\usepackage{xcolor}
\usepackage{graphicx}

\usepackage{tikz}
\usepackage{pgfplots}
  \pgfplotsset{%
    compat        = 1.17,
    table/col sep = comma}
  \usepgfplotslibrary{external}
  \tikzset{external/system call = {%
    pdflatex \tikzexternalcheckshellescape
      -halt-on-error
      -interaction=batchmode
      -jobname "\image" "\texsource"}}
  \tikzexternalize[prefix = graphics/externalize/]
  \tikzexternaldisable

\usepackage{caption}
\usepackage{subcaption}


\newcommand{\C}{\ensuremath{\mathbb{C}}}
\newcommand{\R}{\ensuremath{\mathbb{R}}}
\newcommand{\N}{\ensuremath{\mathbb{N}}}

\newcommand{\M}{\ensuremath{\mathcal{M}}}
\newcommand{\Mk}{\ensuremath{\mathcal{M}^{(k)}}}
\newcommand{\MConj}{\ensuremath{\overline{\M}}}
\newcommand{\MAll}{\ensuremath{\widetilde{\M}}}
\newcommand{\Pset}{\ensuremath{\mathcal{P}}}
\newcommand{\PConj}{\ensuremath{\overline{\Pset}}}
\newcommand{\PAll}{\ensuremath{\widetilde{\Pset}}}
\newcommand{\Pk}{\ensuremath{\Pset^{(k)}}}
\newcommand{\PSO}{\ensuremath{\Pset_{\operatorname{SO}}}}
\newcommand{\PSOConj}{\ensuremath{\PConj_{\operatorname{SO}}}}
\newcommand{\PSOAll}{\ensuremath{\PAll_{\operatorname{SO}}}}
\newcommand{\PkSO}{\ensuremath{\Pk_{\operatorname{SO}}}}

\newcommand{\trans}{\ensuremath{\mkern-1.5mu\mathsf{T}}}
\DeclareMathOperator{\mdiag}{diag}
\DeclareMathOperator*{\argmax}{argmax}
\DeclareMathOperator*{\argmin}{argmin}
\DeclareMathOperator{\real}{Re}
\DeclareMathOperator{\imag}{Im}

\newcommand{\mA}{\ensuremath{\boldsymbol{A}}}
\newcommand{\mD}{\ensuremath{\boldsymbol{D}}}
\newcommand{\mE}{\ensuremath{\boldsymbol{E}}}
\newcommand{\mG}{\ensuremath{\boldsymbol{G}}}
\newcommand{\mJ}{\ensuremath{\boldsymbol{J}}}
\newcommand{\mK}{\ensuremath{\boldsymbol{K}}}
\newcommand{\mM}{\ensuremath{\boldsymbol{M}}}
\newcommand{\mV}{\ensuremath{\boldsymbol{V}}}

\newcommand{\malpha}{\ensuremath{\boldsymbol{\alpha}}}
\newcommand{\mbeta}{\ensuremath{\boldsymbol{\beta}}}
\newcommand{\mLambda}{\ensuremath{\boldsymbol{\Lambda}}}
\newcommand{\mSigma}{\ensuremath{\boldsymbol{\Sigma}}}
\newcommand{\mPsi}{\ensuremath{\boldsymbol{\Psi}}}

\newcommand{\ETAk}{\boldsymbol{\eta}^{(k)}}
\newcommand{\etakDenomWeighted}{%
  \ensuremath{\boldsymbol{\eta}^{(k)}_{\hat{\operatorname{d}}}}}
\newcommand{\etaDenomWeighted}{%
  \ensuremath{\boldsymbol{\eta}_{\hat{\operatorname{d}}}}}
\newcommand{\ETAReal}{\ensuremath{\boldsymbol{\eta}_{\operatorname{real}}}}

\newcommand{\gWeighted}{\ensuremath{\mg_{\boldsymbol{\eta}}}}

\newcommand{\mb}{\ensuremath{\boldsymbol{b}}}
\newcommand{\mc}{\ensuremath{\boldsymbol{c}}}
\newcommand{\mf}{\ensuremath{\boldsymbol{f}}}
\newcommand{\mg}{\ensuremath{\boldsymbol{g}}}
\newcommand{\mh}{\ensuremath{\boldsymbol{h}}}
\newcommand{\mr}{\ensuremath{\boldsymbol{r}}}
\newcommand{\mw}{\ensuremath{\boldsymbol{w}}}
\newcommand{\mx}{\ensuremath{\boldsymbol{x}}}
\newcommand{\mz}{\ensuremath{\boldsymbol{z}}}

\newcommand{\wk}{\ensuremath{w^{(k)}}}
\newcommand{\mwk}{\ensuremath{\mw^{(k)}}}
\newcommand{\mwReal}{\ensuremath{\mw_{\operatorname{real}}}}
\newcommand{\mwConj}{\ensuremath{\overline{w}}}

\newcommand{\mgk}{\ensuremath{\mg^{(k)}}}

\newcommand{\mgReal}{\ensuremath{\mg_{\operatorname{real}}}}

\newcommand{\ek}{\ensuremath{\hat{\mr}^{(k)}}}
\newcommand{\ekSO}{\ensuremath{\ek}_{\operatorname{SO}}} 
\newcommand{\ekTrue}{\ensuremath{\mr^{(k)}}}
\newcommand{\ekTrueSO}{\ensuremath{\ekTrue_{\operatorname{SO}}}}
\newcommand{\eSO}{\ensuremath{\hat{\mr}_{\operatorname{SO}}}}
\newcommand{\eTrueSO}{\ensuremath{\mr_{\operatorname{SO}}}}
\newcommand{\ekResults}{\ensuremath{\boldsymbol{\epsilon}(\M)}}

\newcommand{\arbErrVec}{\ensuremath{\mz^{(k)}}}

\newcommand{\LtwoWeighted}{\ensuremath{\epsilon_{\Ltwo, \boldsymbol{\eta}}}}
\newcommand{\LinfWeighted}{\ensuremath{\epsilon_{\Linf, \boldsymbol{\eta}}}}
\newcommand{\relerr}{\ensuremath{\epsilon_{\operatorname{ptw}}}}

\newcommand{\JacSOk}{\ensuremath{\mJ^{(k)}_{\operatorname{SO}}}}
\newcommand{\JacSONLkFull}{\ensuremath{\mJ_{\operatorname{SONL}}}}
\newcommand{\JacSONLkW}{\ensuremath{\JacSONLkFull(\mwk)}}
\newcommand{\JacSONLkSig}{\ensuremath{\JacSONLkFull(\sigkVec)}}
\newcommand{\JacSO}{\ensuremath{\mJ_{\operatorname{SO}}}}
\newcommand{\JacSONoConj}{\ensuremath{\JacSO(\msigma)}}
\newcommand{\JacSOConj}{\ensuremath{\JacSO(\msigmaConj)}}
\newcommand{\JacRealSONLkW}{%
  \ensuremath{\widetilde{\mJ}_{\operatorname{SONL}}(\mw)}}
\newcommand{\JacRealSONLkSig}{%
  \ensuremath{\widetilde{\mJ}_{\operatorname{SONL}}(\msigma)}}
\newcommand{\JacRealSONLkSigConj}{%
  \ensuremath{\widetilde{\mJ}_{\operatorname{SONL}}(\msigmaConj)}}
\newcommand{\JacRealSONLkFull}{%
  \ensuremath{\widetilde{\mJ}_{\operatorname{SONL}}(\mw, \msigma)}}
\newcommand{\JacRealSONLkWConj}{%
  \ensuremath{\widetilde{\mJ}_{\operatorname{SONL}}(\mwConj)}}
\newcommand{\JacRealSONLkFullConj}{%
  \ensuremath{\widetilde{\mJ}_{\operatorname{SONL}}(\mwConj,\msigmaConj)}}

\newcommand{\msigma}{\ensuremath{\boldsymbol{\sigma}}}
\newcommand{\sigk}{\ensuremath{\sigma^{(k)}}}
\newcommand{\sigkVec}{\ensuremath{\msigma}^{(k)}}
\newcommand{\msigmaConj}{\ensuremath{\overline{\msigma}}}

\newcommand{\Hr}{\ensuremath{\widehat{H}}}
\newcommand{\Hrk}{\ensuremath{\widehat{H}^{(k)}}}
\newcommand{\Hso}{\ensuremath{H_{\operatorname{SO}}}}
\newcommand{\HrSO}{\ensuremath{\Hr_{\operatorname{SO}}}}
\newcommand{\HrkSO}{\ensuremath{\Hrk_{\operatorname{SO}}}}

\newcommand{\numerUSk}{\ensuremath{\hat{n}^{(k)}}}
\newcommand{\denomUSk}{\ensuremath{\hat{d}^{(k)}}}

\newcommand{\nso}{\ensuremath{n_{\operatorname{SO}}}}
\newcommand{\dso}{\ensuremath{d_{\operatorname{SO}}}}

\newcommand{\numerSOk}{\ensuremath{\nso^{(k)}}}
\newcommand{\denomSOk}{\ensuremath{\dso^{(k)}}}

\newcommand{\FOM}{\ensuremath{G}}

\newcommand{\LowUS}{\ensuremath{\mathbb{L}}}
\newcommand{\LowUSk}{\ensuremath{\LowUS^{(k)}}}
\newcommand{\LowSO}{\ensuremath{\LowUS_{\operatorname{SO}}}}
\newcommand{\LowSOk}{\ensuremath{\LowUSk}_{\operatorname{SO}}}
\newcommand{\LowSONL}{\ensuremath{\LowUS_{\operatorname{SONL}}}}
\newcommand{\LowSONLk}{\ensuremath{\LowUSk_{\operatorname{SONL}}}}
\newcommand{\LowUStoSO}{\ensuremath{\widehat{\LowUS}_{\operatorname{SO}}}}
\newcommand{\LowUSReal}{\ensuremath{\breve{\LowUS}}}
\newcommand{\LowSOReal}{\ensuremath{\breve{\LowUS}_{\operatorname{SO}}}}

\theoremstyle{plain}\newtheorem{theorem}{Theorem}
\theoremstyle{plain}\newtheorem{lemma}{Lemma}
\theoremstyle{plain}\newtheorem{corollary}{Corollary}
\theoremstyle{definition}\newtheorem{remark}{Remark}

\newcommand{\eye}[1]{\ensuremath{\boldsymbol{I}_{#1}}}
\newcommand{\ones}[1]{\ensuremath{\mathds{1}_{#1}}}
\newcommand{\maxOrder}{\ensuremath{k_{\max}}}
\newcommand{\imunit}{\ensuremath{\mathfrak{i}}}
\newcommand{\Linf}{\ensuremath{\mathcal{L}_{\infty}}}
\newcommand{\Ltwo}{\ensuremath{\mathcal{L}_{2}}}

\newcommand{\VP}{\texttt{VarPro}}
\newcommand{\AAA}{\texttt{AAA}}
\newcommand{\AAAtwo}{\texttt{AAA2}}
\newcommand{\AAAFull}{\texttt{SO-AAA}}
\newcommand{\AAALin}{\texttt{LSO-AAA}}
\newcommand{\AAAFullNL}{\texttt{NSO-AAA}}


\newcommand{\plotfontsize}{\small}

\newcommand{%
  \tikzexternalenable%
  \tikzsetnextfilename{}%
  \input{graphics/.tikz}%
  \tikzexternaldisable%
}[1]{%
  \tikzexternalenable%
  \tikzsetnextfilename{#1}%
  \input{graphics/#1.tikz}%
  \tikzexternaldisable%
}

\definecolor{matlabblue}{HTML}{0072BD}
\definecolor{matlaborange}{HTML}{D95319}
\definecolor{matlabyellow}{HTML}{EDB120}
\definecolor{matlabpurple}{HTML}{7E2F8E}
\definecolor{matlabgreen}{HTML}{77AC30}
\definecolor{matlablightblue}{HTML}{4DBEEE}
\definecolor{matlabred}{HTML}{A2142F}

\colorlet{trueDataColor}{black}
\colorlet{AAAColor}{matlabblue}
\colorlet{SOAAAColor}{matlabpurple}
\colorlet{LSOAAAColor}{matlaborange}
\colorlet{NLSOAAAColor}{matlabgreen}
\colorlet{AAA2Color}{matlabred}

\tikzstyle{trueData} = [
  trueDataColor,
  solid,
  only marks,
  mark size = 1.75pt,
  mark      = *
]

\tikzstyle{AAAConverge} = [
  AAAColor,
  dashed,
  line width   = 1.25pt,
  mark         = x,
  mark size    = 1.75pt,
  mark options = {solid},
  mark repeat  = {2}
]

\tikzstyle{SOAAAConverge} = [
  SOAAAColor,
  solid,
  line width   = 1.252pt,
  mark         = *,
  mark size    = 1.75pt,
  mark options = {solid},
  mark repeat  = {2}
]

\tikzstyle{LSOAAAConverge} = [
  LSOAAAColor,
  dotted,
  line width   = 1.25pt,
  mark         = diamond*,
  mark size    = 1.75pt,
  mark options = {solid},
  mark repeat  = {2}
]

\tikzstyle{NLSOAAAConverge} = [
  NLSOAAAColor,
  dashdotted,
  line width   = 1.25pt,
  mark         = square*,
  mark size    = 1.5pt,
  mark options = {solid},
  mark repeat  = {2}
]

\tikzstyle{AAA2kConverge} = [
  AAA2Color,
  dashed,
  line width   = 1.25pt,
  mark         = triangle*,
  mark size    = 1.75pt,
  mark options = {solid},
  mark repeat  = {2}
]

\tikzstyle{AAAResponse} = [
  AAAConverge,
  mark repeat = {50},
  mark phase  = {0}
]

\tikzstyle{SOAAAResponse} = [
  SOAAAConverge,
  mark repeat = {50},
  mark phase  = {10}
]

\tikzstyle{LSOAAAResponse} = [
  LSOAAAConverge,
  mark repeat = {50},
  mark phase  = {20}
]

\tikzstyle{NLSOAAAResponse} = [
  NLSOAAAConverge,
  mark repeat = {50},
  mark phase  = {30}
]

\tikzstyle{AAA2kResponse} = [
  AAA2kConverge,
  mark repeat = {50},
  mark phase  = {40}
]


\begin{document}


\title{Second-order AAA algorithms for structured data-driven modeling}

\author[$\ast$]{Michael S. Ackermann}
\affil[$\ast$]{Department of Mathematics, Virginia Tech, Blacksburg,
  VA 24061, USA.\authorcr
  \email{amike98@vt.edu}, \orcid{0000-0003-3581-6299}}

\author[$\dagger$]{Ion Victor Gosea}
\affil[$\dagger$]{
  Max Planck Institute for Dynamics of Complex Technical Systems,
  Sandtorstr. 1, 39106 Magdeburg, Germany.\authorcr
  \email{gosea@mpi-magdeburg.mpg.de}, \orcid{0000-0003-3580-4116}
}
  
\author[$\ddagger$]{Serkan Gugercin}
\affil[$\ddagger$]{%
  Department of Mathematics and Division of Computational Modeling and Data
  Analytics, Academy of Data Science, Virginia Tech,
  Blacksburg, VA 24061, USA.\authorcr
  \email{gugercin@vt.edu}, \orcid{0000-0003-4564-5999}
}

\author[$\S$]{Steffen W. R. Werner}
\affil[$\S$]{%
  Department of Mathematics, Division of Computational Modeling and
  Data Analytics, and Virginia Tech National Security Institute, Virginia Tech,
  Blacksburg, VA 24061, USA.\authorcr
  \email{steffen.werner@vt.edu}, \orcid{0000-0003-1667-4862}
}

\shorttitle{Second-order AAA algorithms}
\shortauthor{Ackermann, Gosea, Gugercin, Werner}
\shortdate{2025-06-02}
\shortinstitute{}

\keywords{%
  data-driven modeling,
  second-order systems,
  reduced-order modeling,
  rational functions,
  barycentric forms
}

\msc{%
  41A20, 
  65D15, 
  93B15, 
  93C05, 
  93C80  
}

\abstract{%
  The data-driven modeling of dynamical systems has become an essential
  tool for the construction of accurate computational models from real-world
  data.
  In this process, the inherent differential structures underlying the
  considered physical phenomena are often neglected making the reinterpretation
  of the learned models in a physically meaningful sense very challenging.
  In this work, we present three data-driven modeling approaches for
  the construction of dynamical systems with second-order differential
  structure directly from frequency domain data.
  Based on the second-order structured barycentric form, we extend the
  well-known Adaptive Antoulas-Anderson algorithm to the case of second-order
  systems.
  Depending on the available computational resources, we propose variations of
  the proposed method that prioritize either higher computation speed or
  greater modeling accuracy, and
  we present a theoretical analysis for the expected accuracy and performance
  of the proposed methods.
  Three numerical examples demonstrate the effectiveness of our new structured
  approaches in comparison to classical unstructured data-driven modeling.
}

\novelty{%
  We develop three new computational methods for the data-driven modeling of
  second-order dynamical systems from frequency domain data.
  The proposed methods adaptively determine the modeling order needed to satisfy
  a given error tolerance.
  We provide a theoretical analysis of the expected accuracy and performance
  for all proposed methods.
}

\maketitle


\section{Introduction}%
\label{sec:intro}

The data-driven modeling of dynamical systems is an essential tool for the
construction of high-fidelity models of physical phenomena when first-principle
formulations are not available yet abundant input/output data are.
Of particular importance for the meaningful physical interpretation of
data-based models is the integration of differential structures into the
modeling process.
In this work, we consider linear dynamical systems that are described by
second-order ordinary differential equations of the form
\begin{subequations} \label{eqn:MechSystem}
\begin{align}
  \mM \ddot{\mx}(t) + \mD \dot{\mx}(t) + \mK \mx(t) & = \mb u(t),\\
  y(t) & = \mc^{\trans} \mx(t),
\end{align}
\end{subequations}
where $\mM \in \C^{k \times k}$ is nonsingular,
$\mD, \mK \in \C^{k \times k}$,
$\mb \in \C^{k}$ and $\mc \in \C^{k}$.
The quantities $\mx(t)$, $u(t)$ and $y(t)$ in~\cref{eqn:MechSystem} denote the
internal system state, the external input and output of the system,
respectively.
Systems like~\cref{eqn:MechSystem} typically appear in the modeling process of
mechanical, electrical and acoustical
structures~\cite{AbrM87, AumW23, Lob18, Bla18a}.
The input-to-output behavior of the system~\cref{eqn:MechSystem} can
be equivalently described in the frequency (Laplace) domain by the
corresponding transfer function
\begin{equation} \label{eqn:MechTransfer}
  \Hso(s) = \mc^{\trans} (s^{2} \mM + s \mD + \mK)^{-1} \mb,
\end{equation}
with the complex variable $s \in \C$. 

In the setting of data-driven modeling that we are interested in, we do not
assume access to internal state-space data, as in~\eqref{eqn:MechSystem}, but
only the input-output data in the form of frequency domain data.
In other words, we have only access to transfer function measurements~$g_{i}$
of an underlying dynamical system at the sampling (frequency) points~$\mu_{i}$,
for $i = 1, \ldots, N$.
Then, the task we consider in this work is the construction of structured
dynamical systems of the form~\cref{eqn:MechSystem} from given transfer function
measurements $\{ (\mu_{i}, g_{i}) \}_{i = 1}^{N}$, so that the data is
approximated well by the corresponding transfer function
\begin{equation}
  \lVert g_{i} - \Hso(\mu_{i}) \rVert \leq \tau
  \quad\text{for}\quad
  i = 1, \ldots, N,
\end{equation}
in some suitable norm and with a user-defined tolerance $\tau \geq 0$.
This is a challenging problem because (i) the fitting of system parameters
towards the given data typically leads to nonlinear, nonconvex optimization
problems, and (ii) the recovery of the second-order structure from an
unstructured transfer function is usually not possible.
In this work, we will utilize the structured barycentric forms developed
in~\cite{GosGW24} to derive a series of new structure-enforcing greedy
data-driven modeling methods to construct second-order
systems~\cref{eqn:MechSystem} directly from frequency domain data.

In the last decades, there have been many significant advances in the area of
data-driven modeling of dynamical systems from frequency domain data for
the case of unstructured (first-order) systems.
This includes transfer function interpolation via the
Loewner framework~\cite{AntA86, MayA07},
nonlinear least-squares approaches like vector fitting~\cite{GusS99, DrmGB15}
and the RKFIT method~\cite{BerG15, BerG17}, and greedy approaches
like the Adaptive Antoulas-Anderson (AAA) algorithm~\cite{NakST18}.
In particular, the AAA algorithm has gained significantly in popularity for
the modeling of dynamical systems in the frequency domain and
has been successfully applied in different applications ranging from
power systems~\cite{MonJIetal20}
to optics~\cite{BetHZetal24}
to acoustics~\cite{BraGAetal25}, and it has been extended to fitting
multivariate problems, e.g., transfer functions of parametric dynamical
systems~\cite{RodBG23}.
In recent years, there have been efforts to extend existing data-driven
modeling approaches to second-order systems~\cref{eqn:MechSystem},
like the interpolatory Loewner framework in~\cite{PonGB22, SchUBetal18} or
the vector fitting method for least-squares fitting in~\cite{WerGG22}.
In contrast to these existing works, we will provide extensions for the greedy
AAA algorithm towards the data-driven modeling of second-order
systems~\cref{eqn:MechSystem}.

The remainder of this manuscript is organized as follows.
In \Cref{sec:Preliminaries}, we outline the mathematical foundations for this
paper by establishing the connection between dynamical systems and rational
functions, introducing barycentric forms, and reminding the reader of the basics
of the varibale projection method for solving nonlinear least-squares problems.
Then, we introduce the unstructured AAA algorithm for the type of the barycentric form
that we consider in this paper in \Cref{sec:AAA_Unstructured}.
From there, we extend the AAA algorithm to the setting of structured
second-order systems in \Cref{sec:SecondOrderAAAs} to propose three new
structured variants of the method.
In \Cref{sec:ModelStructureAndAccuracyHeuristics}, we analyze the potential
performance of our proposed methods in comparison to the classical approach
and in \Cref{sec:realification}, we introduce modifications needed to construct
dynamical systems with real matrices.
The performance of the proposed methods is tested in \Cref{sec:numerics} in
three numerical examples, including the vibrational response of a beam and a
gyroscope as well as the acoustical behavior of a damped cavity.
The paper is concluded in \Cref{sec:conclusions}.


\section{Mathematical preliminaries}%
\label{sec:Preliminaries}

In this section, we recap the foundations of unstructured and structured
barycentric forms as used in the data-driven modeling of dynamical systems.
We will also remind the reader of the variable projection method for nonlinear
least-squares problems as we will employ it later in some of our proposed
methods.


\subsection{First-order systems and the unstructured barycentric form}%
\label{sec:UnstructSystems}

A linear dynamical system of the form
\begin{subequations} \label{eqn:UnstructuredSystem}
\begin{align}
  \mE \dot{\mx}(t) & = \mA \mx(t) + \mb u(t)\\
  y(t) &= \mc^{\trans} \mx(t),
\end{align}
\end{subequations}
where $\mE \in \C^{k \times k}$ is nonsingular, $\mA \in \C^{k\times k}$,
$\mb \in \C^{k}$, and $\mc \in \C^{k}$, is called a \emph{first-order}
(unstructured) system.
In~\cref{eqn:UnstructuredSystem}, the quantities $\mx(t) \in \C^{k}$,
$u(t) \in \C$, $y(t) \in \C$ are the internal system state, external input
and output, respectively, at time $t \in \R$.
The \emph{state-space dimension} of the system~\cref{eqn:UnstructuredSystem} is
given by $k$, the size of the state vector.
Using the Laplace transform, the system~\cref{eqn:UnstructuredSystem} can
equivalently described in the frequency domain via its transfer function
\begin{equation} \label{eqn:UnstructuredTransfer}
    H(s) = \mc^{\trans} (s\mE - \mA)^{-1} \mb,
\end{equation}
with $s \in \C$.
The transfer function in~\cref{eqn:UnstructuredTransfer} is a strictly
proper degree-$k$ rational function in the complex variable $s$.
Note that any strictly proper rational function of degree $k$ can be written in
the form~\cref{eqn:UnstructuredTransfer}.

A powerful tool for numerical computation with rational functions is the
\emph{barycentric form}, given by 
\begin{equation} \label{eqn:UnstructuredBaryForm}
  H(s) = \frac{n(s)}{d(s)} =
    \frac{\displaystyle \sum\limits_{j = 1}^{k} \frac{h_{j} w_{j}}%
    {s - \lambda_{j}}}%
    {\displaystyle 1 + \sum\limits_{j = 1}^{k} \frac{w_{j}}{s - \lambda_{j}}},
\end{equation}
where $\lambda_{j}, w_{j}, h_{j} \in \C$ and $w_{j} \neq 0$,
for $j = 1, \ldots, k$.
The $\lambda_{j}$'s are referred to as the \emph{barycentric support points},
the $w_{j}$'s are the called \emph{barycentric weights}, and
the $h_{j}$'s are \emph{function values}.
Provided that all $w_{j} \neq 0$, the barycentric
model~\cref{eqn:UnstructuredBaryForm} has a removable singularity at each
support point $\lambda_{j}$ and satisfies $\lim_{s \to \lambda_{j}} = h_{j}$.
Therefore, the particular form~\cref{eqn:UnstructuredBaryForm} is also known as
the \emph{interpolatory barycentric form}.
In the remainder of this work, we will consider the analytic continuation of
the barycentric form~\cref{eqn:UnstructuredBaryForm} to include its support
points, that is, we will write $H(\lambda_{j}) = h_{j}$.
Furthermore, we assume that rational functions are irreducible.
That means the rational functions have no pole-zero cancellations, i.e.,
numerator and denominator do not have common roots.
As the following result shows, by varying the weights $w_{j}$ in the barycentric
form~\cref{eqn:UnstructuredBaryForm}, any strictly proper irreducible
degree-$k$ rational function that takes the value $h_{j}$ at $\lambda_{j}$,
for~$j = 1, \ldots, k$, can be recovered.

\begin{lemma} \label{lmm:BaryFormRecoverAnything}
  Any irreducible strictly proper rational function $f$ of degree $k$ can be
  represented by~\cref{eqn:UnstructuredBaryForm}.
  Furthermore, the distinct support points $\lambda_{j}$,
  for $j = 1, \ldots, k$, can be chosen arbitrary, provided $f$ has no pole at
  $\lambda_{j}$.
\end{lemma}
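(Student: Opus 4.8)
The plan is to reduce the claim to a single linear-algebra fact about the node polynomials $\ell_j(s) = \prod_{l \ne j}(s - \lambda_l)$. First I would write the target function in lowest terms as $f = p/q$, where $q$ is monic of degree $k$ and $\deg p \le k - 1$; irreducibility of $f$ means $\gcd(p, q) = 1$. Since the barycentric form satisfies $H(\lambda_j) = h_j$ by the analytic-continuation convention, matching $H = f$ forces the function values $h_j = f(\lambda_j)$, so the $h_j$ are not free. This leaves the weights $w_j$ as the only parameters to determine, and I would recast the problem in terms of them by clearing the partial-fraction denominators: multiplying numerator and denominator of~\cref{eqn:UnstructuredBaryForm} by $\prod_{l=1}^{k}(s - \lambda_l)$ turns it into a ratio $H = n/d$ of the polynomials
\begin{align*}
  n(s) &= \sum_{j=1}^{k} h_j w_j \, \ell_j(s), &
  d(s) &= \prod_{l=1}^{k}(s - \lambda_l) + \sum_{j=1}^{k} w_j \, \ell_j(s),
\end{align*}
with $\deg n \le k - 1$ and $d$ monic of degree $k$.

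The key step is to choose the weights so that $d = q$. Because the support points are distinct, the node polynomials $\ell_1, \dots, \ell_k$ are linearly independent (they satisfy $\ell_j(\lambda_i) = 0$ for $i \ne j$ and $\ell_j(\lambda_j) \ne 0$) and hence form a basis of the space of polynomials of degree at most $k - 1$. Consequently $\sum_j w_j \ell_j$ can be made equal to any prescribed polynomial of degree $\le k - 1$, in particular to $q(s) - \prod_l (s - \lambda_l)$; both terms here are monic of degree $k$, so their difference indeed has degree $\le k - 1$. Evaluating this identity at $s = \lambda_i$ annihilates every term except $j = i$ and yields the explicit formula
\begin{equation*}
  w_i = \frac{q(\lambda_i)}{\prod_{l \ne i}(\lambda_i - \lambda_l)}.
\end{equation*}

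With the denominators matched, I would verify that the numerator matches for free. Evaluating $n$ at $\lambda_i$ and substituting the weight formula gives $n(\lambda_i) = h_i\, w_i \prod_{l \ne i}(\lambda_i - \lambda_l) = h_i\, q(\lambda_i) = p(\lambda_i)$, using $h_i = p(\lambda_i)/q(\lambda_i)$. Since $n$ and $p$ both have degree at most $k - 1$ and agree at the $k$ distinct nodes $\lambda_i$, they are identical; therefore $H = n/d = p/q = f$, and $H$ inherits the degree $k$ and irreducibility of $f$.

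The only genuine obstacle, and the place where the hypothesis enters, is ensuring that every weight is nonzero, as the barycentric form requires. This follows precisely because $f$ has no pole at $\lambda_i$: by irreducibility the poles of $f$ are exactly the roots of $q$, so $q(\lambda_i) \ne 0$ and hence $w_i \ne 0$. I expect the remaining bookkeeping, namely the degree counts, the monic normalization of $q$, and the handling of the support points via the interpolation convention, to be routine.
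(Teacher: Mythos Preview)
Your argument is correct and takes a genuinely different route from the paper's proof. The paper proceeds by a degree-of-freedom count: it picks $k$ additional test points $\mu_1,\dots,\mu_k$ (distinct from the $\lambda_j$ and not poles of $f$), observes that forcing $\Hr(\mu_i)=f(\mu_i)$ amounts to a $k\times k$ linear system $-\LowUS\,\mw=\mg$ with a Loewner matrix, invokes the classical result that this Loewner matrix is nonsingular because the data come from an order-$k$ rational function, and then concludes $\Hr\equiv f$ since the two degree-$k$ strictly proper rational functions agree at $2k$ points. By contrast, you bypass both the auxiliary interpolation nodes and the Loewner-matrix rank citation: you clear denominators, match $d=q$ directly via the Lagrange node-polynomial basis, and then observe that the numerator identity $n=p$ follows for free by evaluating at the $\lambda_i$. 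Your route is more elementary and fully self-contained, and it yields the explicit closed form $w_i=q(\lambda_i)/\prod_{l\ne i}(\lambda_i-\lambda_l)$, from which the nonvanishing of the weights is immediate under the no-pole hypothesis---a point the paper's proof leaves implicit. The paper's approach, on the other hand, highlights the Loewner-matrix structure that drives the AAA least-squares step later on, so it foreshadows the algorithmic development even if it leans on an external rank result.
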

\begin{proof}
  Let $f$ be a strictly proper rational function of degree-$k$.
  Then, the function $f$ has $2k$ degrees of freedom, and any strictly proper
  rational function $\Hr$ of degree $k$, which interpolates $f$ at
  $2k$ distinct points, must satisfy $\Hr \equiv f$ on $\C$.
  Let
  \begin{equation} \label{eqn:tmp_interpolationData_inproof}
    \{ (\lambda_{j}, h_{j})\}_{j = 1}^{k} \quad\text{and}\quad
    \{ (\mu_{i}, g_{i}) \}_{i = 1}^{k},
  \end{equation}
  where $h_{j} = f(\lambda_{j})$ and $g_{i} = f(\mu_{i})$, be two data sets
  so that $\lambda_{j}$ and $\mu_{i}$ are distinct and no poles of $f$,
  for $j, i = 1, \ldots, k$.
  By construction, the barycentric form~\cref{eqn:UnstructuredBaryForm}
  satisfies $\Hr(\lambda_{j}) = h_{j}$, for all $j = 1, \ldots, k$.
  Then, it is left to show that there exist barycentric weights
  \begin{equation} \label{eqn:tmp_Baryweights_inproof}
    \mw = \begin{bmatrix} w_{1} & \ldots & w_{k} \end{bmatrix}^{\trans}
  \end{equation}
  such that~\cref{eqn:UnstructuredBaryForm} satisfies $\Hr(\mu_{i}) = g_{i}$,
  for all $i = 1, \ldots, k$.
  Thus, for fixed support points and function values, the barycentric weights
  need to be chosen so that
  \begin{equation} \label{eqn:tmp_InterConds_inproof}
    \frac{\displaystyle \sum_{j = 1}^{k} \frac{h_{j} w_{j}}%
    {\mu_{i} - \lambda_{j}}}%
    {\displaystyle 1 + \sum_{j = 1}^{k} \frac{w_{j}}{\mu_{i} - \lambda_{j}}}
    = g_{i},
  \end{equation}
  holds for $i = 1, \ldots, k$.
  Define the Loewner matrix $\LowUS \in \C^{k \times k}$ and the
  vector of function values $\mg \in \C^{k}$ via
  \begin{equation}
    \LowUS_{i, j} = \frac{g_{i} - h_{j}}{\mu_{i} - \lambda_{j}}
    \quad \text{and}\quad
    \mg_{i} = g_{i}, \quad\text{for}~i, j = 1, \ldots, k.
  \end{equation}
  The constraints~\cref{eqn:tmp_InterConds_inproof} are equivalent to the
  existence of $\mw \in \C^{k}$ such that
  \begin{equation} \label{eqn:tmp_FitExtraInterp_inproof}
    -\LowUS \mw = \mg.
  \end{equation}
  Since the data~\cref{eqn:tmp_interpolationData_inproof} comes from a rational
  function of degree $k$, the Loewner matrix $\LowUS$ has full
  rank~\cite{AntA86}.
  Therefore, there exists the vector $\mw$ as the solution
  to~\cref{eqn:tmp_FitExtraInterp_inproof}.
  Then, the barycentric form~\cref{eqn:UnstructuredBaryForm}, with the
  parameters $\lambda_{j}, h_{j}$ from~\cref{eqn:tmp_interpolationData_inproof}
  and weights $w_{j}$ that satisfy~\cref{eqn:tmp_FitExtraInterp_inproof},
  interpolates the rational function $f$ at the $2k$
  points~\cref{eqn:tmp_interpolationData_inproof}, which shows the desired
  result that $\Hr \equiv f$.
\end{proof}

The result from \Cref{lmm:BaryFormRecoverAnything} implies that the support
points in~\cref{eqn:UnstructuredBaryForm} may be chosen as any set of $k$
distinct values in $\C$, which are not the locations of poles of the rational
function.
To relate the barycentric form~\cref{eqn:UnstructuredBaryForm} to the
state-space realization in~\cref{eqn:UnstructuredSystem} and its transfer
function as in~\cref{eqn:UnstructuredTransfer}, define
\begin{subequations} \label{eqn:Lam_w_h_def}
\begin{align}
    \mLambda & = \mdiag(\lambda_{1}, \ldots, \lambda_{k}) \in \C^{k\times k}, \\
    \mw & = \begin{bmatrix} w_{1} & \ldots & w_{k} \end{bmatrix}^{\trans}
      \in \C^{k}, \\
    \mh & = \begin{bmatrix} h_{1} & \ldots & h_{k} \end{bmatrix}^{\trans}
      \in \C^{k}.
\end{align}
\end{subequations}
Also, let $\ones{k} \in \R^{k}$ denote the vector of all ones of length $k$.
Then, a state-space realization of the dynamical
system~\cref{eqn:UnstructuredSystem} (and transfer
function~\cref{eqn:UnstructuredTransfer}) corresponding to the barycentric
form~\cref{eqn:UnstructuredBaryForm} is given via
\begin{equation} \label{eqn:EAbc_def}
  \mE = \eye{k}, \quad
  \mA = \mLambda - \mb \ones{k}^{\trans}, \quad
  \mb = \mw \quad
  \text{and} \quad
  \mc = \mh,
\end{equation}
where $\eye{k} \in \R^{k \times k}$ is the $k$-dimensional identity matrix;
see,~\cite{GosGW24} for further details.


\subsection{Second-order systems and second-order barycentric forms}%
\label{sec:MechBaryForm}

Recently in~\cite{GosGW24}, the barycentric form has been extended to
second-order systems~\cref{eqn:MechSystem} with the structured transfer
function~\cref{eqn:MechTransfer}.
First, we note that~\cref{eqn:MechTransfer} is a degree-$2k$ rational function
in the complex variable $s$.
Also, note that the state-space dimension of~\cref{eqn:MechSystem} is $k$,
while its transfer function is a degree-$2k$ rational function.
Under some mild assumptions, transfer functions of the
form~\cref{eqn:MechTransfer} can be represented via a structured
barycentric form
\begin{equation} \label{eqn:MechBaryForm}
  \Hso(s) = \frac{\nso(s)}{\dso(s)} =
    \frac{\displaystyle \sum_{j = 1}^{k} \frac{h_{j} w_{j}}%
    {(s - \lambda_{j}) (s - \sigma_{j})}}%
    {\displaystyle 1 + \sum_{j = 1}^{k} \frac{w_{j}}%
    {(s - \lambda_{j})(s - \sigma_{j})}},
\end{equation}
with $\lambda_{j}, w_{j}, h_{j}, \sigma_{j} \in \C$ for $j = 1, \ldots, k$;
see~\cite{GosGW24}.
As before, the $\lambda_{j}$'s are referred to as
\emph{barycentric support points}, the $w_{j}$'s are the
\emph{barycentric weights}, and the $h_{j}$'s are \emph{function values}.
The additional parameters $\sigma_j$ are called the \emph{quasi-support points}.
The term quasi-support points has been introduced in~\cite{GosGW24} and
refers to the fact that~\cref{eqn:MechBaryForm} satisfies, additionally to 
$\Hso(\lambda_{j}) = h_{j}$, also $\Hso(\sigma_{j}) = h_{j}$,
provided that $w_{j} \neq 0$, for $j = 1, \ldots k$.
    
As was the case for the unstructured barycentric
form~\cref{eqn:UnstructuredBaryForm}, one may obtain a state-space
representation of the second-order system~\cref{eqn:MechSystem}
(and its transfer function~\cref{eqn:MechTransfer}) from the barycentric
representation~\cref{eqn:MechBaryForm}.
Recall the definitions of $\mLambda$, $\mw$, and $\mh$
in~\cref{eqn:Lam_w_h_def}, and additionally define
\begin{equation} \label{eqn:SIGMA_def}
  \mSigma = \mdiag(\sigma_{1}, \ldots, \sigma_{k}) \in \C^{k \times k}.
\end{equation}
Then, a matrix representation~\cref{eqn:MechTransfer} of the second-order barycentric form~\cref{eqn:MechBaryForm} can be obtained as
\begin{equation} \label{eqn:MDK_def}
  \mM = \eye{k}, \quad
  \mD = -\mLambda - \mSigma, \quad
  \mK = \mLambda \mSigma + \mw \ones{k}^{\trans}, \quad
  \mb = \mw  \quad
  \text{and}\quad
  \mc = \mh;
\end{equation}
see,~\cite{GosGW24} for further details.
Unlike the unstructured transfer function~\cref{eqn:UnstructuredTransfer}, not
every degree-$2k$ rational function can be written as a second-order transfer
function~\cref{eqn:MechTransfer}.
Thus, to recover second-order models from data, specialized methods such as
those that we develop in this work are required.

Note that in addition to~\cref{eqn:MechBaryForm},~\cite{GosGW24} provides
another barycentric representation for second-order
systems~\cref{eqn:MechSystem}.
This additional barycentric form imposes different assumptions on the
barycentric parameters, which complicates its use for data-driven modeling.
Therefore, we will not further investigate this additional barycentric form
in this work.


\subsection{The Variable Projection method}%
\label{sec:varpro}

The Variable Projection method~\cite{GolP73} (\VP) is a powerful
technique to accelerate the optimization of objective functions in linearly
separable form.
Since we will employ \VP{} in one of our proposed algorithms in
\Cref{sec:SecondOrderAAAs}, we will briefly summarize it here.

\VP{} considers optimization problems of the form
\begin{equation} \label{eqn:VarProMinimizationProblem}
  \min\limits_{\malpha, \mbeta \in \C^{n}}
    \lVert \mr(\malpha, \mbeta) \rVert_{2}^{2},
\end{equation}
with $\mr(\malpha, \mbeta) \in \C^{M}$ being defined entrywise
\begin{equation} \label{eqn:variableProjObjFun}
  \left[ \mr(\malpha, \mbeta) \right]_{i} = f_{i} -
    \sum\limits_{j = 1}^{k} \alpha_{j} \psi_{i, j}(\mbeta),
  \quad\text{for}~i = 1, \ldots, M,
\end{equation}
where $\mf = \begin{bmatrix} f_{1} & \ldots & f_{M} \end{bmatrix}^{\trans}
\in \C^{M}$ are function value samples, $\malpha, \mbeta \in \C^{k}$ are
parameters to be optimized, and each $\psi_{i, j}$ is a nonlinear function of
one or more components of $\mbeta$.
Defining the matrix $\mPsi(\mbeta) \in \C^{M \times k}$ entrywise as
\begin{equation}
  \left[ \mPsi(\mbeta) \right]_{i, j} = \psi_{i, j}(\mbeta)
\end{equation}
allows to express~\cref{eqn:VarProMinimizationProblem} equivalently via
\begin{equation} \label{eqn:variableProjObjFun_asMat}
  \min\limits_{\malpha,\mbeta}
    \left\lVert \mPsi(\mbeta) \malpha - \mf \right\rVert_{2}^{2}.
\end{equation}
For a fixed value of $\mbeta$, the optimal value of $\malpha$ is given by
$\mPsi(\mbeta)^{\dagger}\mf$, where $\mV^{\dagger}$ denotes the Moore-Penrose
pseudoinverse of the matrix $\mV$.
Substituting this optimal $\malpha$ into~\cref{eqn:variableProjObjFun_asMat}
yields the projected optimization problem
\begin{equation} \label{eqn:variableProjObjFun_projected}
  \min\limits_{\mbeta} \lVert \mPsi(\mbeta) \mPsi(\mbeta)^{\dagger} \mf -
    \mf \rVert_{2}^{2}.
\end{equation}
The projected problem~\cref{eqn:variableProjObjFun_projected} is now a
nonlinear least-squares problem depending only on the $k$ parameters in
$\mbeta$.
After the minimization of~\cref{eqn:variableProjObjFun_projected}, the optimal
nonlinear parameters,~$\mbeta_{\ast}$, have been identified.
The optimal linear parameters~$\malpha_{\ast}$ can easily be identified by
computing
\begin{equation} \label{eqn:optimalLinParmsGivenNonLin}
  \malpha_{\ast} = \mPsi(\mbeta_{\ast})^{\dagger} \mf.
\end{equation}

The optimization of~\cref{eqn:variableProjObjFun_projected} requires
differentiating the pseudoinverse of $\mPsi(\mbeta)$ with respect
to the optimization variables $\mbeta$.
In general, gradients of the cost
function~\cref{eqn:variableProjObjFun_projected} can be rather computationally
expensive so that a single step of an optimization solver
for~\cref{eqn:variableProjObjFun_projected} is typically more expensive
than a single step of a solver for the original
problem~\cref{eqn:VarProMinimizationProblem}.
However, it has been frequently observed that \VP{} converges in far fewer
iterations than optimization methods applied
to~\cref{eqn:VarProMinimizationProblem}.
In fact, in the case that the Gauss-Newton method is used, \VP{} is
guaranteed to converge faster than the direct approach~\cite{RuhW80}.
Furthermore, it has been observed in~\cite{Kro74} that \VP{} may converge when
direct optimization methods fail, which could be due to a better optimization
landscape coming from the projected cost function.

Since the introduction of \VP{} in~\cite{GolP73}, there have been many
advances of the method; see, for example, the work on derivative
calculations~\cite{Kau75} and the implementational details in~\cite{OLeR13}.
For a survey on \VP{} and its applications, see~\cite{GolP03}.


\section{Revisiting the unstructured AAA algorithm}%
\label{sec:AAA_Unstructured}

The methods that we propose in this work for structured data-driven modeling
are strongly inspired by the Adaptive-Anderson-Antoulas (AAA)
algorithm~\cite{NakST18}.
To this end, we recap the foundations and basic functionality of the AAA
algorithm in this section.

Let $\FOM\colon \C \to \C$ denote an arbitrary unknown function for which the
following data set is given
\begin{equation} \label{eqn:M_def}
  \M \coloneqq \{ (\mu_{1}, g_{1}, \eta_{1}), \ldots,
    (\mu_{M}, g_{M}, \eta_{M}) \},
\end{equation}
where $\mu_{i} \in \C$ are \emph{evaluation points},
$g_{i} = G(\mu_{i}) \in \C$ are the \emph{function values} corresponding to the
evaluation points, and $\eta_{i} > 0$ are optional \emph{weights}.
We aim to find a degree-$k$ strictly proper rational function in barycentric
form~\cref{eqn:UnstructuredBaryForm} that approximates the given data
in~\cref{eqn:M_def} well.
Due to the interpolatory properties of the barycentric
form~\cref{eqn:UnstructuredBaryForm}, we may choose support points
$\{ \lambda_{j} \}_{j = 1}^{k} \subseteq \{ \mu_{i} \}_{i = 1}^{M}$ from
$\M$ at which to interpolate the corresponding function values
$\{ h_{j} = G(\lambda_{j}) \}_{j = 1}^{k} \subseteq \{ g_{i} \}_{i = 1}^{M}$
by the construction of the barycentric form.
We gather this interpolation data in a set of tuples
\begin{equation}
  \Pk = \{ (\lambda_{1}, h_{1}), \ldots, (\lambda_{k}, h_{k}) \}.
\end{equation}
Once $\Pk$ has been determined, the \emph{barycentric weights} 
\begin{equation} \label{eqn:wkVec_def}
  \mwk = \begin{bmatrix} \wk_{1} & \ldots & \wk_{k} \end{bmatrix}^{\trans}
\end{equation}
are chosen to improve the fit of the function approximation with respect to the
remaining data in $\M$.  
The barycentric weights $\mwk$ can be chosen to enforce additional
interpolation conditions, as is done in~\cite{AntA86}.
In AAA, the weights $\mwk$ are used instead to minimize a linearized
least-squares residual on the uninterpolated data.

Here, we rederive AAA with several modifications from its original
presentation in~\cite{NakST18}.
These modifications adapt the algorithm to be a suitable foundation for our
proposed second-order AAA algorithms, and the adapted \AAA{} will be more
appropriate for numerical comparisons later on.
Specifically, we introduce a different initialization strategy, enforce
strictly proper rational functions, and explicitly incorporate weighted data
fitting.
From here on, we denote the \emph{unstructured} AAA algorithm by \AAA{}, since,
in contrast to the methods we develop in \Cref{sec:SecondOrderAAAs}, it does
not enforce any structure on the rational functions it generates.
    
Define the initial rational function and data sets to be
\begin{equation}
  \Hr^{(0)}(s) = 0, \quad \M^{(0)} = \M \quad\text{and}\quad
  \Pset^{(0)} = \emptyset.
\end{equation}
Then, in iteration $k > 0$, \AAA{} first determines the tuple
$(\mu, g, \eta) \in \M^{(k - 1)}$ that maximizes the (nonlinear) weighted
approximation error
\begin{equation} \label{eqn:NextLamIsWorstMu_USAAA}
  (\mu, g, \eta) = \argmax\limits_{(\mu_{i}, g_{i}, \eta_{i}) \in
    \M^{(k - 1)}}\eta_{i} \lvert \Hr^{(k - 1)} (\mu_{i}) - g_{i} \vert.
\end{equation}
The selected data sample is then added to the interpolation data set
$\Pk = \Pset^{(k - 1)} \cup \{ (\lambda_{k}, h_{k}) \}$, where the evaluation
point $\mu$ is relabeled as $\lambda_{k}$ and the corresponding function value
$g$ as $h_{k}$.
The tuple is then removed from the test set $\Mk = \M^{(k - 1)} \setminus
\{(\mu, g, \eta)\}$.
Thus, at iteration $k$, the set $\Mk$ contains $M - k$ tuples and
$\Pk$ contains $k$ tuples.
After the removal of $\{(\mu, g, \eta)\}$, the indices of elements in $\Mk$ are
relabeled to $1, \ldots, M - k$, that is, there are no gaps in the index set.

Note that the maximizer of~\cref{eqn:NextLamIsWorstMu_USAAA} might not be
unique.
For example, in the case of relative data weighting
(i.e., $\eta_{i} = 1 / \lvert g_{i} \rvert$), for $k = 1$, approximation error
in~\cref{eqn:NextLamIsWorstMu_USAAA} satisfies
$\eta_{i} \lvert \Hr^{(0)}(\mu_{i}) - g_{i} \vert = 1$ for all
$i = 1, \ldots, M$.
Assume that at iteration $k$ there is a non-singleton
$\M^{(k - 1)}_{\max} \subset \M^{(k-1)}$ solution
to~\cref{eqn:NextLamIsWorstMu_USAAA}.
To remove the ambiguity, we propose to continue the selection of a single
error maximizer by selecting the point that maximizes the unweighted
approximation error from the set $\M_{\max}^{(k - 1)}$, i.e.,
\begin{equation} \label{eqn:NextLamIsWorstMu_unweighted_USAAA}
  (\mu, g, \eta) = \argmax\limits_{(\mu_{i}, g_{i}, \eta_{i}) \in
    \M_{\max}^{(k - 1)}} \rvert \Hr^{(k - 1)}(\mu_{i}) - g_{i} \lvert.
\end{equation}
If there are still multiple maximizers
in~\cref{eqn:NextLamIsWorstMu_unweighted_USAAA}, we fall back on choosing the
maximizer with the smallest index.

After the selection of $\lambda_{k}$ and $h_{k}$ and the update of the sets
$\Mk$ and $\Pk$, the barycentric weights $\mwk$ must be determined to yield
good approximations with respect to the uninterpolated data in $\Mk$.
Ideally, we would like to solve
\begin{equation} \label{eqn:TrueLSProblem_unstruct}
    \min\limits_{\mwk} \lVert \ekTrue \rVert_{2}^{2},
\end{equation}
where $\ekTrue \in \C^{M - k}$ is defined as
\begin{subequations} \label{eqn:ektrueDef_unstructured}
\begin{align}
  \ekTrue_{i} = \eta_{i} \left( \Hrk(\mu_{i}) - g_{i} \right) & =
    \eta_{i} \left( \frac{\numerUSk(\mu_{i})}{\denomUSk(\mu_{i})} - g_{i}
    \right) \\
  & =  \eta_{i} \left( \left( \frac{\sum_{j = 1}^{k}
    \frac{h_{j} w_{j}}{\mu_{i} - \lambda_{j}}}%
    {1 + \sum_{j = 1}^{k}
    \frac{w_{j}}{\mu_{i} - \lambda_{j}}} \right) - g_{i} \right),
\end{align}
\end{subequations}
for $(\mu_{i}, g_{i}, \eta_{i}) \in \Mk$ and $i = 1, \ldots, M - k$.
Since~\cref{eqn:ektrueDef_unstructured} is nonlinear in the barycentric weights
$w_{j}$, \AAA{} instead considers the linearized weighted residual vector
$\ek \in \C^{M - k}$ whose $i$-th entry is
\begin{subequations} \label{eqn:linearizedError_unstruct}
\begin{align}
  \ek_{i} & = \eta_{i} (\numerUSk(\mu_{i}) - g_{i} \denomUSk(\mu_{i})) \\
  & = \eta_{i} \left( \left( \sum_{j = 1}^{k} \frac{h_{j} \wk_{j}}%
    {\mu_{i} - \lambda_{j}} \right) - g_{i} \left(1 + \sum_{j = 1}^{k}
    \frac{\wk_{j}}{\mu_{i} - \lambda_{j}} \right) \right) \\
  & = \eta_{i} \left( \left( \sum_{j = 1}^{k} \wk_{j} \frac{h_{j} - g_{i}}%
    {\mu_{i} - \lambda_{j}} \right) - g_{i} \right)
\end{align}
\end{subequations}
for $(\mu_{i}, g_{i}, \eta_{i}) \in \Mk$ and $i = 1, \ldots, M - k$.
Define the Loewner matrix $\LowUSk \in \C^{(M - k) \times k}$ entrywise via 
\begin{equation} \label{eqn:LoewnerMat_USAAA}
  \LowUSk_{i, j} = \frac{g_{i} - h_{j}}{\mu_{i} - \lambda_{j}},
    \quad \text{where}\quad
    (\mu_{i}, g_{i}, \eta_{i}) \in \Mk
    \quad\text{and}\quad
    (\lambda_{j}, h_{j}) \in \Pk,
\end{equation}
for $i = 1, \ldots, M - k$ and $j = 1, \ldots k$.
Also, define the vectors $\mgk \in \C^{M - k}$ and $\ETAk \in \R^{M-k}$
entrywise to be
\begin{equation} \label{eqn:GkAndEtakDef}
  \mgk_{i} = g_{i} \quad\text{and}\quad
  \ETAk_{i} = \eta_{i}, \quad\text{where}\quad
  (\mu_{i}, g_{i}, \eta_{i}) \in \Mk,
\end{equation}
for $i = 1, \ldots, M - k$.
Then, the desired barycentric weights $\mwk$ are computed as the solution to
the linear least-squares problem
\begin{equation} \label{eqn:LinearLsProblem_unstruct}
  \mwk = \argmin\limits_{\mw \in \C^{k}} \left\lVert
    \mdiag(\ETAk) \left( -\LowUSk \mw - \mgk \right) \right\rVert_{2}^{2}.
\end{equation}
Note that we keep the negative signs in~\cref{eqn:LinearLsProblem_unstruct} for
the clarity of presentation of the linear least-squares problem.
The vector of barycentric weights $\mwk$ that
solves~\cref{eqn:LinearLsProblem_unstruct} is given by
\begin{equation} \label{eqn:WeightedLS_USAAA}
  \mwk = -\left(\mdiag(\ETAk) \LowUSk \right)^{\dagger}
    \left( \mdiag(\ETAk) \mgk \right).
\end{equation}

This process of greedily choosing the next support point $\lambda_{k}$ and
function value $h_{k}$ via~\cref{eqn:NextLamIsWorstMu_USAAA}, then fitting the
barycentric weights $\mwk$ by solving the linearized least-squares
problem~\cref{eqn:LinearLsProblem_unstruct} is iteratively repeated for
$k = 1, \ldots, \maxOrder$.
Here, the number $\maxOrder$ is the maximum number of iteration steps as well as
the maximum order of the associated model.
Optionally, a convergence tolerance for approximation error measures based on
the nonlinear residual $\ekTrue$ in~\cref{eqn:linearizedError_unstruct} can
allow the iteration to terminate early when the approximation is accurate
enough.
Such measures are typically weighted norms of $\ekTrue$; see, for example, the
setup of the numerical experiments in \Cref{sec:numerics}.
We summarize the unstructured AAA algorithm in \Cref{alg:UnstructAAA}.
System matrices $\mE, \mA, \mb, \mc$ corresponding to the output of
\Cref{alg:UnstructAAA} can be directly obtained using~\cref{eqn:EAbc_def}.

\begin{algorithm}[t]
  \SetAlgoHangIndent{1pt}
  \DontPrintSemicolon
  \caption{Unstructured AAA algorithm (\AAA).}%
  \label{alg:UnstructAAA}

  \KwIn{Data set $\M = \{(\mu_{i}, g_{i}, \eta_{i})\}_{i = 1}^{M}$ and
    maximum model order $\maxOrder$.}
  \KwOut{Parameters of the barycentric form
    $\lambda_{j}, h_{j}, w_{j}$, for~$j = 1, \ldots, k$.}
  
  \For{$k = 1, \ldots, \maxOrder$}{
    Find $(\mu, g, \eta) \in \M^{(k - 1)}$ that maximizes the
      weighted approximation error
      \begin{equation*}
        (\mu, g, \eta) = \argmax\limits_{(\mu_{i}, g_{i}, \eta_{i}) \in
          \M^{(k - 1)}} \eta_{i} \lvert \Hr^{(k - 1)}(\mu_{i}) - g_{i} \rvert.
      \end{equation*}\;
      \vspace{-\baselineskip}
      
    Update the barycentric parameters and the data sets
      \begin{equation*}
        \lambda_{k} = \mu, \quad
          h_{k} = g, \quad
          \Mk = \M^{(k - 1)} \setminus \{(\mu, g, \eta)\}, \quad
          \Pk = \Pset^{(k - 1)} \cup \{(\lambda_{k}, h_{k})\}.
      \end{equation*}\;
      \vspace{-\baselineskip}
    
    Solve the linearized least-squares problem
      \begin{equation*}
        \mwk = \argmin\limits_{\mw \in \C^{k}} \left\lVert \mdiag(\ETAk)
          \left(-\LowUSk \mw - \mgk \right) \right\rVert_{2}^{2}
      \end{equation*}
      via~\cref{eqn:WeightedLS_USAAA} for the barycentric weights.\;
  }
\end{algorithm}


\section{Second-order AAA algorithms}
\label{sec:SecondOrderAAAs}

The (unstructured) AAA algorithm presented in \Cref{sec:AAA_Unstructured} makes
use of the interpolatory barycentric form~\cref{eqn:UnstructuredBaryForm} and
the linearized residual~\cref{eqn:linearizedError_unstruct} to
combine rational function interpolation with least-squares approximation.
Our goal in this section is to develop second-order AAA algorithms, that is,
algorithms that follow the same two-step procedure of greedy interpolation
and least-squares minimization as \AAA{} but produce second-order systems of
the form~\cref{eqn:MechSystem}.
The remainder of this section is organized as follows:
\Cref{sec:settingStage} discusses the general structure of our algorithms and
presents a nonlinear least-squares problem similar
to~\cref{eqn:TrueLSProblem_unstruct}.
Then, \Cref{sec:SOAAA,sec:SONLAAA,sec:LinSOAAA} discuss three approaches for
solving this optimization problem, each of which lead to a different
second-order AAA algorithm.


\subsection{Algorithmic foundations}%
\label{sec:settingStage}

As in \Cref{sec:AAA_Unstructured}, we assume access to the data
\begin{equation} \label{eqn:M_def2}
  \M \coloneq \{ (\mu_{1}, g_{1}, \eta_{1}), \ldots,
    (\mu_{M}, g_{M}, \eta_{M})\}.
\end{equation}
However, this time we seek to determine the parameters of a second-order
barycentric model
\begin{equation}
  \HrkSO(s) = \frac{\numerSOk(s)}{\denomSOk(s)} =
    \frac{\displaystyle \sum_{j = 1}^{k} \frac{h_{j} \wk_{j}}%
    {(s - \lambda_{j}) (s - \sigk_{j})}}%
    {\displaystyle 1 + \sum\limits_{j = 1}^{k} \frac{\wk_{j}}%
    {(s - \lambda_{j}) (s - \sigk_{j})}}
\end{equation}
to fit the data in $\M$.
Like the unstructured barycentric form~\cref{eqn:UnstructuredBaryForm},
the second-order barycentric from~\cref{eqn:MechBaryForm} has
the interpolation property $\HrkSO(\lambda_{j}) = h_{j}$,
for $j = 1, \ldots, k$.
Thus, we will greedily choose the barycentric support points
$\{ \lambda_{j} \}_{j = 1}^{k}$ from $\M$ at which to interpolate the
corresponding function values $\{ h_{j} \}_{j = 1}^{k}$.
As before, we collect these parameters into the set
\begin{equation}
  \PkSO = \{ (\lambda_{1}, h_{1}, \sigk_{1}), \ldots,
    (\lambda_{k}, h_{k},\sigk_{k}) \},
\end{equation}
where the $\{ \sigk_{j} \}_{j = 1}^{k}$ are \emph{initializations of the
quasi-support points}.
At step $k$ of the algorithm, after the support point $\lambda_{k}$ and
function value $h_{k}$ have been chosen according to the greedy procedure
described in \Cref{sec:AAA_Unstructured}, and the quasi-support points
\begin{equation} \label{eqn:sigkVec_def}
  \sigkVec = \begin{bmatrix} \sigk_{1} & \ldots & \sigk_{k}
    \end{bmatrix}^{\trans} \in \C^{k}
\end{equation}
have been initialized, we then seek to update the quasi-support points
$\sigkVec$ as well as the barycentric weight vector
\begin{equation}
  \mwk = \begin{bmatrix} \wk_{1} & \ldots & \wk_{k} \end{bmatrix}^{\trans}
    \in \C^{k}
\end{equation}
to solve the nonlinear least-squares problem
\begin{equation} \label{eqn:TrueLSProblem_SO}
  \min\limits_{\mwk, \sigkVec} \left\lVert \ekTrueSO \right\rVert_{2}^{2},
\end{equation}
where $\ekTrueSO \in \C^{M - k}$ is the residual vector defined entrywise
defined via
\begin{equation} \label{eqn:err_trueSO}
  \left[ \ekTrueSO \right]_{i} \coloneqq \eta_{i} \left(
    \frac{\numerSOk(\mu_{i})}{\denomSOk(\mu_{i})} - g_{i} \right) =
    \eta_{i} \left( \left( \frac{\sum_{j = 1}^{k} \frac{h_{j} \wk_{j}}%
    {(\mu_{i} - \lambda_{j}) (\mu_{i} - \sigk_{j})}}%
    {1 + \sum_{j = 1}^{k} \frac{\wk_{j}}{(\mu_{i} - \lambda_{j})
    (\mu_{i} - \sigk_{j})}} \right) - g_{i} \right),
\end{equation}
for $(\mu_{i}, g_{i}, \eta_{i})\in \Mk$ and $i = 1, \ldots, M - k$.

It is important to note that, in general, we cannot use the $\sigk$ parameters
to enforce additional interpolation conditions.
This is because not only does the barycentric form~\cref{eqn:MechBaryForm}
ensure that $\HrkSO(\lambda_{j}) = h_{j}$ but also $\HrkSO(\sigk_{j}) = h_{j}$.
Thus, unless there are two data tuples $(\mu_{p}, g_{p}, \eta_{p}) \in \M$ and
$(\mu_{q}, g_{q}, \eta_{q}) \in \M$ where $g_{p} = g_{q}$ while
$\mu_{p} \neq \mu_{q}$, the $\sigk$ parameters cannot be used to enforce
additional interpolation conditions.
In the following, we discuss three different methods to solve the nonlinear
least-squares problem~\cref{eqn:TrueLSProblem_SO}, leading to three different
second-order variants of the AAA algorithm. 


\subsection{Simplification to separable residual vector}%
\label{sec:SOAAA}

The residual~\cref{eqn:err_trueSO} is nonlinear in both the barycentric weights
$\wk_{j}$ and the quasi-support points $\sigk_{j}$.
Similar to the unstructured AAA algorithm in \Cref{sec:AAA_Unstructured}, we
seek to find a residual expression that is easier to optimize than the true
nonlinear residual~\cref{eqn:err_trueSO}.
We apply the same idea as in \Cref{sec:AAA_Unstructured} to obtain
the simplified residual $\ekSO \in \C^{M-k}$, that is, we factor out and remove
the denominator of the barycentric form.
Entrywise, this residual vector is given as
\begin{subequations} \label{eqn:separableObjFun}
\begin{align}
  \left[ \ekSO \right]_{i} & =
    \eta_{i} \left( \numerSOk(\mu_{i}) - g_{i} \denomSOk(\mu_{i}) \right)\\
  & = \eta_{i} \left( \left( \sum\limits_{j = 1}^{k}
    \frac{h_{j} \wk_{j}}{(\mu_{i} - \lambda_{j}) (\mu_{i} - \sigk_{j})} \right)
    - g_{i} \left(1 + \sum\limits_{j = 1}^{k}
    \frac{\wk_{j}}{(\mu_{i} - \lambda_{j}) (\mu_{i} - \sigk_{j})} \right)
    \right)\\
  & = \eta_{i} \left( \left( \sum\limits_{j = 1}^{k} \wk_{j}
    \frac{h_{j} - g_{i}}{(\mu_{i} - \lambda_{j}) (\mu_{i} - \sigk_{j})}
    \right) - g_{i} \right),
\end{align}
\end{subequations}
for $i = 1, \ldots, M - k$.
Note that while $\ek$ in \Cref{sec:AAA_Unstructured} was linear in its unknowns
after this simplification, the structured residual $\ekSO$ is still nonlinear.
More specifically, the residual $\ekSO$ has a \emph{separable} nonlinearity in
the sense that each entry of $\ekSO$ is a linear combination of nonlinear
functions of the quasi-support points $\sigk_{j}$ with the barycentric weights
as linear coefficients $\wk_{j}$.
Due to this particular structure in~\cref{eqn:separableObjFun}, we seek to use
\VP{} for the optimization.
Following \Cref{sec:varpro}, we define the matrix of nonlinear functions of
$\sigk_{j}$ as the Loewner-like matrix $\LowSOk \in \C^{(M - k)\times k}$,
with the entries
\begin{equation} \label{eqn:LoewnerMat_SO}
  \left[ \LowSOk \right]_{i, j} = \frac{g_{i} - h_{j}}%
    {(\mu_{i} - \lambda_{j}) (\mu_{i} - \sigk_{j})},
\end{equation}
where $(\mu_{i}, g_{i}, \eta_{i}) \in \Mk$ and
$(\lambda_{j}, h_{j}, \sigk_{j}) \in \PkSO$,
for $i = 1, \ldots, M - k$ and $j = 1, \ldots, k$.
Defining $\mgk$ and $\ETAk$ as in~\cref{eqn:GkAndEtakDef}, we aim to solve
\begin{equation} \label{eqn:SeparableLSProblem}
  \min\limits_{\mwk, \sigkVec} \lVert \ekSO \rVert_{2}^{2} =
    \min\limits_{\mwk, \sigkVec} \left\lVert \mdiag(\ETAk)
      \left(-\LowSOk \mwk - \mgk \right) \right\rVert_{2}^{2}
\end{equation}
as a surrogate for~\cref{eqn:TrueLSProblem_SO}.
While not fully linear, we expect the separable
residual~\cref{eqn:separableObjFun} to be easier to optimize
than~\cref{eqn:err_trueSO}, especially when $k$ (the current model size) is
large.
Solving the optimization problem with \VP{} will further decrease computation
times.

Successful optimization of~\cref{eqn:SeparableLSProblem} depends on a good
initialization of the $\sigk_{j}$ parameters.
At iteration $k$, the previous $\sigma^{(k - 1)}_{j}$,
for $j = 1,\ldots, k - 1$, parameters have already been optimized.
Thus, it is suitable to set $\sigk_{j} = \sigma^{(k - 1)}_{j}$ for
$j = 1,\ldots, k-1$.
In this case, an initialization strategy is needed only for the remaining
$\sigk_{k}$.
As remarked in~\cite{GosGW24}, since our model $\HrkSO$ satisfies
$\HrkSO(\sigk_{k}) = h_{k}$, initializing $\sigk_{k}$ near another sample point
$\mu_{p}$ could lead to large approximation errors if $h_{p} \neq h_{k}$. 
Thus, $\sigk_{k}$ should be initialized far from the given sample points.
In typical applications, this means far from the imaginary axis.
The heuristics in~\cite{GosGW24} suggest to choose $\sigk_{k}$ far inside the
left half-plane.
We observed that this selection causes the objective
function~\cref{eqn:SeparableLSProblem}
to decrease monotonically as the order of approximation $k$ increases,
as desired.
In particular, we initialize $\sigk_k$ as 
\begin{equation}
  \sigk_{k} = c - \imunit\imag(\lambda_k),
\end{equation}
where $c$ is a large negative real number, $\imunit \coloneq \sqrt{-1}$ is
the imaginary unit and $\imag(\lambda_{k})$ is the imaginary part of the
barycentric support point $\lambda_{k}$.
In \Cref{sec:AccuracyHeuristics}, we provide a theoretical analysis that
further supports this initialization strategy.

For completeness, we supply the Jacobian of the residual vector $\ekSO$
in~\cref{eqn:separableObjFun} with respect to $\sigkVec$ that will be used
in~\VP.
Conveniently, this quantity can be computed easily from the Loewner-like
matrix~\cref{eqn:LoewnerMat_SO} entrywise as
\begin{equation} \label{eqn:Jacobian_SO}
  \left[ \JacSOk(\sigkVec) \right]_{i, j} =
    -\left[ \LowSOk \right]_{i, j} \frac{\eta_{i}}{\mu_{i} - \sigk_{j}},
\end{equation}
for $i = 1, \ldots, M - k$ and $j = 1, \ldots, k$.
The presence of $\LowSOk$ in the Jacobian computation offers a great advantage
in terms of computation costs:
We must compute $\LowSOk$ for function evaluations so that reusing it for
Jacobian computations yields significant savings.

The resulting separable second-order AAA algorithm (\AAAFull) is summarized in
\Cref{alg:SOAAA}.
Second-order system matrices $\mM, \mD, \mK,\mb,\mc$ corresponding to the
output of \Cref{alg:SOAAA} can be directly obtained using~\cref{eqn:MDK_def}.

\begin{algorithm}[t]
  \SetAlgoHangIndent{1pt}
  \DontPrintSemicolon
  \caption{Separable second-order AAA algorithm (\AAAFull).}%
  \label{alg:SOAAA}

  \KwIn{Data set $\M = \{(\mu_{i}, g_{i}, \eta_{i})\}_{i = 1}^{M}$ and
    maximum model order $\maxOrder$.}
  \KwOut{Parameters of the second-order barycentric form
    $\lambda_{j}, \sigma_{j}, h_{j}, w_{j}$, for~$j = 1, \ldots, k$.}
  
  \For{$k = 1, \ldots, \maxOrder$}{
    Find $(\mu, g, \eta) \in \M^{(k - 1)}$ that maximizes the weighted
      approximation error
      \begin{equation*}
        (\mu, g, \eta) = \argmax\limits_{(\mu_{i}, g_{i}, \eta_{i}) \in
          \M^{(k - 1)}} \eta_{i} \lvert \HrSO^{(k - 1)}(\mu_{i}) - g_{i} \rvert.
      \end{equation*}\;
      \vspace{-\baselineskip}

    Update the barycentric parameters
      \begin{equation*}
          \lambda_{k} = \mu, \quad
          h_{k} = g, \quad
          \sigkVec = \begin{bmatrix} \msigma^{(k-1)} \\
            c - \imunit\imag(\lambda_{k}) \end{bmatrix}
      \end{equation*}
      and the data sets
      \begin{equation*}
        \Mk = \M^{(k - 1)} \setminus \{(\mu, g, \eta)\}, \quad
        \PkSO = \PSO^{(k - 1)} \cup \{(\lambda_{k}, h_{k}, \sigk)\}.
      \end{equation*}\;
      \vspace{-\baselineskip}
      
    Solve the separable nonlinear least-squares problem
      \begin{equation*}
        \min\limits_{\mwk, \sigkVec} \left\lVert \mdiag(\ETAk)
          \left( -\LowSOk \mwk - \mgk \right) \right\rVert_{2}^{2}
      \end{equation*}
      using \VP{} for the barycentric weights $\mwk$ and quasi-support points
      $\sigkVec$.\;
  }
\end{algorithm}

\begin{remark}
  Note that~\cref{eqn:SeparableLSProblem} requires optimization of a real
  function that depends on complex parameters, which is necessarily
  non-analytic.
  Hence, \Cref{eqn:SeparableLSProblem} cannot be optimized in a traditional
  sense, even though the Jacobian~\cref{eqn:Jacobian_SO} of the residual
  vector~\cref{eqn:separableObjFun} is readily computable.
  This issue can be circumvented by expressing the optimization parameters in
  terms of their real and complex parts, and then minimizing these as
  two sets of real parameters.
  The downside of this approach is that the Jacobians of these objective
  functions typically lose the convenient structures present in the complex
  Jacobians.
  In this work, we instead rely on the
  \emph{Wirtinger calculus}~\cite{Wir27} (or $\C\R$-calculus) and the
  so-called \emph{complex gradient operator}.
  A nonanalytic function $f(\mz): \C^n \to \R$ can be optimized with the
  Wirtinger calculus if it can be written as $\hat{f}(\mz, \overline{\mz})$
  that is analytic in $\mz$ when $\overline{\mz}$ is regarded as a
  constant and analytic in $\overline{\mz}$ when $\mz$ is regarded as a
  constant.
  For more information, we refer to~\cite{KooQKetal23, Kre09, SorVD12}.
\end{remark}


\subsection{Other second-order AAA methods}%
\label{sec:othermethods}

For most problems of interest, we recommend using \AAAFull{} from the previous
\Cref{sec:SOAAA} to construct second-order dynamical systems from data.
The separable form of the residual vector~\cref{eqn:separableObjFun} can be
efficiently optimized with \VP{}, and offers typically good approximation
quality.
However, in some applications such as those involving real-time data collection
like digital twins~\cite{TaoZZ24, WilS24}, models must be created or updated in
fractions of a second.
In such scenarios, even the speed up offered by \VP{} may not be sufficient.  
For such cases, we introduce the Fully Linearized Second-order AAA Algorithm
(\AAALin) in \Cref{sec:LinSOAAA}.
This method avoids the solution of nonlinear optimization problems altogether
for the price of approximation accuracy.

On the other hand, in some applications such as control system
design~\cite{GilGPetal17, Sax19}, the final size of the data-driven model
must be as small and as accurate as possible, while the computation time
used for constructing the model is not as important.
While very competitive at moderate to large reduced orders, we find that the
separable objective function considered in \Cref{sec:SOAAA} may limit
the accuracy of the models for very small sizes $k$, and thus may be less
suitable for these applications.
For such cases, we introduce the Fully Nonlinear Second-order AAA Algorithm
(\AAAFullNL) in \Cref{sec:SONLAAA}.
Both \AAALin{} and \AAAFullNL{} use the greedy update selection described in
\Cref{sec:AAA_Unstructured} to choose their barycentric support points.


\subsubsection{Linearized second-order AAA}%
\label{sec:LinSOAAA}

The separable objective function~\cref{eqn:separableObjFun} is linear in the
barycentric weights $\wk_{j}$ but nonlinear in the quasi-support points
$\sigk_{j}$.
As discussed in \Cref{sec:SOAAA}, placing the $\sigk_{j}$ parameters close to
the sample points $\mu_{i}$ is likely to lead to approximation errors, while
placing the $\sigk_{j}$'s far in the left half plane leads to monotonic
decay of the simplified residual $\ekSO$ in~\cref{eqn:SeparableLSProblem}.
This is true even if $\sigkVec$ is not updated after its initialization.
Thus, we may choose to leave the quasi-support points $\sigk_{j}$ where they
have been initialized in \AAAFull{} to obtain a fully linearized second-order AAA
method.

The fully linearized second-order AAA algorithm (\AAALin) then only requires
single linear least-squares solve in each iteration $k$, and as such has speed
comparable to \AAA.
Specifically, define the Loewner-like matrix $\LowSOk$ as
in~\cref{eqn:LoewnerMat_SO}.
Then, with the vector of function values $\mgk \in \C^{(M-k)}$  and the
data weights $\ETAk \in \R^{(M-k)}$ as in~\cref{eqn:GkAndEtakDef}, we seek to
solve
\begin{equation} \label{eqn:LinearLSProblem}
  \min\limits_{\mwk} \lVert \ekSO \rVert_{2}^{2} =
    \min\limits_{\mwk} \left\lVert \mdiag(\ETAk) \left( -\LowSOk\mwk - \mgk
    \right) \right\rVert_{2}^{2}.
\end{equation}
The unique vector of barycentric weights $\mwk$ that minimizes
$\lVert \ekSO \rVert$ is given by
\begin{equation} \label{eqn:LinearLSSolution_SOLAAA}
  \mwk = -\left( \mdiag(\ETAk) \LowSOk \right)^{\dagger}
    \left( \mdiag(\ETAk) \mgk \right).
\end{equation} 
The resulting fully linearized second-order AAA algorithm, \AAALin{}, is
summarized in \Cref{alg:LinAAA}.
Similar to the previous second-order AAA method, \AAALin{} computes the
barycentric parameters so that we can obtain matrices of the corresponding
second-order system realization using~\cref{eqn:MDK_def}.

\begin{algorithm}[t]
  \SetAlgoHangIndent{1pt}
  \DontPrintSemicolon
  \caption{Linearized second-order AAA algorithm (\AAALin).}%
  \label{alg:LinAAA}

  \KwIn{Data set $\M = \{(\mu_{i}, g_{i}, \eta_{i})\}_{i = 1}^{M}$ and
    maximum model order $\maxOrder$.}
  \KwOut{Parameters of the second-order barycentric form
    $\lambda_{j}, \sigma_{j}, h_{j}, w_{j}$, for~$j = 1, \ldots, k$.}
  
  \For{$k = 1, \ldots, \maxOrder$}{
    Find $(\mu, g, \eta) \in \M^{(k - 1)}$ that maximizes the weighted
      approximation error
      \begin{equation*}
        (\mu, g, \eta) = \argmax\limits_{(\mu_{i}, g_{i}, \eta_{i})
          \in \M^{(k - 1)}} \eta_{i} \lvert \HrSO^{(k - 1)}(\mu_{i}) - g_{i}
          \rvert.
      \end{equation*}\;
      \vspace{-\baselineskip}

    Update the barycentric parameters
      \begin{equation*}
          \lambda_{k} = \mu, \quad
          h_{k} = g, \quad
          \sigkVec = \begin{bmatrix} \msigma^{(k-1)} \\
            c - \imunit\imag(\lambda_{k}) \end{bmatrix}
      \end{equation*}
      and the data sets
      \begin{equation*}
        \Mk = \M^{(k - 1)} \setminus \{(\mu, g, \eta)\}, \quad
        \PkSO = \PSO^{(k - 1)} \cup \{(\lambda_{k}, h_{k}, \sigk)\}.
      \end{equation*}\;
      \vspace{-\baselineskip}
    
    Solve the linearized least-squares problem
      \begin{equation*}
         \mwk = \argmin\limits_{\mw} \left\lVert \mdiag(\ETAk)
           \left( -\LowSOk \mw - \mgk \right) \right\rVert_{2}^{2}
      \end{equation*}
      via~\cref{eqn:LinearLSSolution_SOLAAA} for the barycentric weights.\;
  }
\end{algorithm}

We emphasize that \AAALin{} is meant for cases where computation time is the
most important factor.
Keeping the quasi-support points $\sigma_{j}$ fixed leads to much faster
computation times, with the possible trade-off of reduced approximation quality.


\subsubsection{Fully nonlinear second-order AAA Algorithm}%
\label{sec:SONLAAA}

Finally, in this section, we consider a variant of the second-order AAA
algorithm with the fully nonlinear least-squares problem
\begin{equation} \label{eqn:nonlinLS}
  \min\limits_{\mwk, \sigkVec} \left\lVert \ekTrueSO \right\rVert_{2}^{2},
\end{equation}
with the elements of the residual vector  $\ekTrueSO \in \C^{M - k}$ being
defined as
\begin{equation} \label{eqn:nonlinError}
  \left[ \ekTrueSO \right]_{i} =
    \eta_{i} \lvert \HrkSO(\mu_{i}) - g_{i} \rvert =
    \eta_{i} \left\lvert \frac{\sum_{j = 1}^{k} \frac{h_{j} \wk_{j}}%
    {(\mu_{i} - \lambda_{j}) (\mu_{i} - \sigk_{j})}}{1 + \sum_{j = 1}^{k}
    \frac{\wk_{j}}{(\mu_{i} - \lambda_{j}) (\mu_{i} - \sigk_{j})}} -
    g_{i}\right\rvert,
\end{equation}
where $(\mu_{i}, g_{i}, \eta_{i}) \in \Mk$ and for $i = 1, \ldots, M - k$.
In contrast to the separable residual~\cref{eqn:separableObjFun}, the
expression~\cref{eqn:nonlinError} is nonlinear in both the barycentric weights
$\wk_{j}$ and quasi-support points~$\sigk_{j}$ leading to a more difficult
optimization problem.
However, in the cases where particularly high approximation accuracy is
required, the optimization of~\cref{eqn:nonlinError} will provide the best
results.
The rest of this section describes \AAAFullNL{}, our second-order AAA method
for optimizing $\ekTrueSO$.

To optimize $\ekTrueSO$, we initialize $\sigk_{k}$ in the same manner as we did
for the separable objective function $\ekSO$ in \Cref{sec:SOAAA}.
Since the barycentric weights $\wk_{j}$ will now also be optimized by a
nonlinear solver, they too require an initialization strategy.
At iteration $k$, the previous weights $w_{j}^{(k-1)}$, for $j = 1,\ldots,k-1$,
have already been optimized.
Hence, we initialize $\wk_{j} = w^{(k-1)}_{j}$, for $j = 1, \ldots, k - 1$.
As was our strategy with the initialization of $\sigk_{k}$, we aim to promote a
monotonic decrease in the least-squares residual
$\lVert \ekTrueSO \rVert_{2}^{2}$ for increasing model size $k$.
One idea could be to initialize the remaining barycentric weight with
$\wk_{k} = 0$ so that the optimization starts with a function equal to the
previous approximation $\HrSO^{(k-1)}$.
However, due to the interpolation property of the barycentric form, the
objective function is not differentiable when $\wk_{j} = 0$ for any $j$.
Instead, we choose to initialize $\wk_{k}$ with a small absolute value to
minimize its impact at the start of the optimization.
In our numerical experiments, we have observed that initializing
$\wk_{k} = -1$ yields a monotonic decrease of $\ekTrueSO$.

It remains to provide the Jacobian of the fully nonlinear residual
vector~\cref{eqn:nonlinError}.
Define the Loewner-like matrix $\LowSONLk \in \C^{(M - k)\times k}$ entrywise
via
\begin{equation} \label{eqn:LoewnerMat_nonLinJac}
    \left[ \LowSONLk \right]_{i, j} = \frac{\HrkSO(\mu_{i}) - h_{j}}%
      {(\mu_{i} - \lambda_{j}) (\mu_{i} - \sigma_{j})},
\end{equation}
for $i = 1, \ldots, M - k$ and $j = 1, \ldots, k$.
Note that the difference between~\cref{eqn:LoewnerMat_nonLinJac}
and~\cref{eqn:LoewnerMat_SO} is that the value $\FOM(\mu_{i}) = g_{i}$
is replaced by $\HrkSO(\mu_{i})$, the current \AAAFullNL{} approximation
to $g_{i} = G(\mu_{i})$.
Define the vector $\etakDenomWeighted \in \C^{M - k}$ with the entries
\begin{equation}
  \left[ \etakDenomWeighted \right]_{i} = \frac{\eta_{i}}{\denomSOk(\mu_{i})},
\end{equation}
with $\denomSOk(s)$, the denominator rational function in the second-order
barycentric form~\cref{eqn:MechBaryForm} of \AAAFullNL{} approximation at
iteration $k$.
Then, the Jacobian of $\ekTrueSO$ with respect to $\mwk$ is given by
\begin{equation} \label{eqn:JacobianComplexSONLwrtW}
  \JacSONLkW = -\mdiag(\etakDenomWeighted) \LowSONLk,
\end{equation}
and the Jacobian of $\ekTrueSO$ with respect to $\sigkVec$ is entrywise given by
\begin{equation} \label{eqn:JacobianComplexSONLwrtSig}
  \left[ \JacSONLkSig \right]_{i, j} =
    -\frac{w_{j} \left[ \etakDenomWeighted \right]_{i}}%
    {\mu_{i} - \sigma_{j}} \left[ \LowSONLk \right]_{i,j},
\end{equation}
with $i = 1, \ldots, M - k$ and $j = 1, \ldots, k$.
Finally, the Jacobian of the residual vector $\ekTrueSO$ with respect to the
full vector of nonlinear parameters $\begin{bmatrix} \left( \mwk
\right)^{\trans} & \left( \sigkVec \right)^{\trans} \end{bmatrix}^{\trans}
\in \C^{2k}$ is then given as
\begin{equation} \label{eqn:JacobianComplexSONLFull}
  \JacSONLkFull = \begin{bmatrix} \JacSONLkW \\ \JacSONLkSig \end{bmatrix}.
\end{equation}
With the Jacobians at hand, we summarize the fully nonlinear second-order AAA
algorithm (\AAAFullNL) in \Cref{alg:NSOAAA}.
As for the previous second-order AAA methods, we can easily obtain matrices of
the corresponding second-order system realization for \AAAFullNL{}
using~\cref{eqn:MDK_def}.

\begin{algorithm}[t]
  \SetAlgoHangIndent{1pt}
  \DontPrintSemicolon
  \caption{Fully nonlinear second-order AAA algorithm (\AAAFullNL).}%
  \label{alg:NSOAAA}

  \KwIn{Data set $\M = \{(\mu_{i}, g_{i}, \eta_{i})\}_{i = 1}^{M}$ and
    maximum model order $\maxOrder$.}
  \KwOut{Parameters of the second-order barycentric form
    $\lambda_{j}, \sigma_{j}, h_{j}, w_{j}$, for~$j = 1, \ldots, k$.}
  
  \For{$k = 1, \ldots, \maxOrder$}{
    Find $(\mu, g, \eta) \in \M^{(k - 1)}$ that maximizes the weighted
      approximation error
      \begin{equation*}
        (\mu, g, \eta) = \argmax\limits_{(\mu_{i}, g_{i}, \eta_{i})
          \in \M^{(k - 1)}} \eta_{i} \lvert \HrSO^{(k - 1)}(\mu_{i}) - g_{i}
          \rvert.
      \end{equation*}\;
      \vspace{-\baselineskip}

    Update the barycentric parameters
      \begin{equation*}
        \lambda_{k} = \mu, \quad
        h_{k} = g, \quad
        \sigkVec = \begin{bmatrix} \msigma^{(k-1)} \\
          c - \imunit\imag(\lambda_{k}) \end{bmatrix}, \quad
        \mwk = \begin{bmatrix} \mw^{(k - 1)} \\ -1 \end{bmatrix}
      \end{equation*}
      and the data sets
      \begin{equation*}
        \Mk = \M^{(k - 1)} \setminus \{(\mu, g, \eta)\}, \quad
        \PkSO = \PSO^{(k - 1)} \cup \{(\lambda_{k}, h_{k}, \sigk)\}.
      \end{equation*}\;
      \vspace{-\baselineskip}
    
    Solve the nonlinear least-squares problem
      \begin{equation*}
        \min\limits_{\mwk, \sigkVec} \left\lVert \ekTrueSO \right\rVert_{2}^{2}
      \end{equation*}
      for the barycentric weights $\mwk$ and quasi-support points $\sigkVec$.\;
  }
\end{algorithm}


\section{Heuristics on attainable accuracy}%
\label{sec:ModelStructureAndAccuracyHeuristics}

In this section, we first show that a second-order barycentric model can always
be represented as an unstructured barycentric model, but that the converse is
typically not true.
These results provide further justification for enforcing second-order
structure algorithmically as opposed to using a post-processing step, and
leads to heuristics on the attainable accuracy of our second-order AAA
algorithms developed in \Cref{sec:SecondOrderAAAs}.

For simplicity of presentation, we drop here the iteration index $k$ from the
parameters $\sigk$ and $\wk$ but keep the order $k$ of the barycentric models
(i.e., $\Hrk$ and $\HrkSO$) to trim the notation while keeping the order of the
associated barycentric approximations explicit.
We also assume all weights as $\eta_{i} = 1$, though the analysis holds for any
weights.
We remind the reader that an order-$k$ unstructured barycentric model ($\Hrk$)
is a degree $k$ strictly proper rational function, while an order-$k$
second-order barycentric model ($\HrkSO$) is a degree-$2k$ strictly proper
rational function.


\subsection{Switching between model structures}%
\label{sec:SwitchModelStructure}

It is well known~\cite{TisM01} that any degree-$2k$ second-order transfer
function~\cref{eqn:MechTransfer} can be exactly written as an unstructured
degree-$2k$ transfer function~\cref{eqn:UnstructuredTransfer}.
Generally, the converse is not true, that is, an unstructured transfer function
cannot necessarily be written as a second-order transfer function.
In this section, we show that these relations extend to the barycentric
forms~\cref{eqn:UnstructuredBaryForm,eqn:MechBaryForm}.
We begin by showing that any degree-$2k$ stricly proper rational function that
can be represented by the second-order barycentric form~\cref{eqn:MechBaryForm}
can be exactly represented by the unstructured barycentric
form~\cref{eqn:UnstructuredBaryForm}.

\begin{lemma} \label{lmm:2ndOrderIsUnstructured}
  The second-order barycentric form
  \begin{equation} \label{eqn:MechBaryForm2}
    \HrkSO(s) = \frac{\sum_{j = 1}^{k} \frac{h_{j} w_{j}}%
      {(s - \lambda_{j}) (s - \sigma_{j})}}%
      {1 + \sum_{j = 1}^{k} \frac{w_{j}}{(s - \lambda_{j}) (s - \sigma_{j})}}
  \end{equation}
  can be exactly expressed as an unstructured barycentric form
  \begin{equation} \label{eqn:MechBaryAsUnstruct}
    \Hr^{(2k)}(s) = \frac{\sum_{j = 1}^{k} \left( \frac{h_{j} \hat{w}_{j}}%
      {s - \lambda_j} - \frac{h_{j} \hat{w}_{j}}{s - \sigma_{j}} \right)}%
      {1 + \sum_{j = 1}^{k} \left( \frac{\hat{w}_{j}}{s - \lambda_{j}} -
      \frac{\hat{w}_{j}}{s - \sigma_{j}} \right)},
  \end{equation}
  where the unstructured barycentric weights are defined
  \begin{equation}
    \hat{w}_{j} = \frac{w_{j}}{\lambda_{j} - \sigma_{j}}.
  \end{equation}
\end{lemma}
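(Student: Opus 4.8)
The plan is to reduce the claim to the single partial fraction identity
\[
  \frac{1}{(s - \lambda_{j})(s - \sigma_{j})}
    = \frac{1}{\lambda_{j} - \sigma_{j}}
      \left( \frac{1}{s - \lambda_{j}} - \frac{1}{s - \sigma_{j}} \right),
\]
which holds whenever $\lambda_{j} \neq \sigma_{j}$ (it is verified by bringing the right-hand side over a common denominator). Multiplying through by $w_{j}$ and recognizing $\hat{w}_{j} = w_{j} / (\lambda_{j} - \sigma_{j})$ immediately rewrites each second-order denominator term as a difference of two simple poles, $\hat{w}_{j} / (s - \lambda_{j}) - \hat{w}_{j} / (s - \sigma_{j})$.

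First I would apply this identity termwise to the denominator $\dso(s)$ of~\cref{eqn:MechBaryForm2}, which converts $1 + \sum_{j} w_{j} / ((s - \lambda_{j})(s - \sigma_{j}))$ into exactly the denominator of~\cref{eqn:MechBaryAsUnstruct}. Next I would repeat the same manipulation on the numerator $\nso(s)$: here each summand carries an extra factor $h_{j}$, so the identity turns $h_{j} w_{j} / ((s - \lambda_{j})(s - \sigma_{j}))$ into $h_{j} \hat{w}_{j} / (s - \lambda_{j}) - h_{j} \hat{w}_{j} / (s - \sigma_{j})$, producing the numerator of~\cref{eqn:MechBaryAsUnstruct}. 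Since each rational term has been rewritten as an equal rational function, the numerator and denominator are unchanged as functions of $s$, and hence $\HrkSO \equiv \Hr^{(2k)}$ on $\C$.

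To finish, I would interpret the result as a genuine unstructured barycentric form of the type~\cref{eqn:UnstructuredBaryForm}: the $2k$ support points are $\lambda_{1}, \ldots, \lambda_{k}, \sigma_{1}, \ldots, \sigma_{k}$, the associated weights are $\hat{w}_{1}, \ldots, \hat{w}_{k}, -\hat{w}_{1}, \ldots, -\hat{w}_{k}$, and the function values attached to both $\lambda_{j}$ and $\sigma_{j}$ equal $h_{j}$ --- consistent with the property $\HrkSO(\sigma_{j}) = h_{j}$ noted after~\cref{eqn:MechBaryForm}. I would observe that the standing assumption $w_{j} \neq 0$ guarantees $\hat{w}_{j} \neq 0$, so the weights are admissible in the sense of~\cref{eqn:UnstructuredBaryForm}.

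There is essentially no hard step here; the calculation is a mechanical partial fraction split. The only point that warrants care is the degenerate case $\lambda_{j} = \sigma_{j}$, for which the identity fails and $\hat{w}_{j}$ is undefined. I would handle it by invoking the standing assumption that the support and quasi-support points are distinct, so that this case does not arise; alternatively one notes that a genuine double pole $(s - \lambda_{j})^{-2}$ cannot be produced by the pole differences appearing in~\cref{eqn:MechBaryAsUnstruct}, confirming that the unstructured form is the correct target only in the distinct-pole regime.
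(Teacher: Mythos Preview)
Your proof is correct and follows exactly the paper's approach: the paper's proof is the single sentence ``The result follows directly from expanding each term in the numerator and denominator of~\cref{eqn:MechBaryForm2} as partial fractions,'' which is precisely the identity you spell out. Your additional remarks on the degenerate case $\lambda_{j} = \sigma_{j}$ and the interpretation as an order-$2k$ unstructured barycentric form go beyond what the paper records but are consistent with it.
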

\begin{proof}
  The result follows directly from expanding each term in the numerator and
  denominator of~\cref{eqn:MechBaryForm2} as partial fractions.
\end{proof}

The result of \Cref{lmm:2ndOrderIsUnstructured} is unsurprising, as it has
already been shown in \Cref{lmm:BaryFormRecoverAnything} that any strictly
proper irreducible degree-$2k$ rational function can be represented by a model
of the form~\cref{eqn:UnstructuredBaryForm}.
However, \Cref{lmm:2ndOrderIsUnstructured} yields the explicit relation
between the two barycentric forms in terms of their weights.

We now explore conditions on the parameters of the unstructured barycentric
form~\cref{eqn:UnstructuredBaryForm} that allow to express it equivalently in
the second-order barycentric form.
First, note that the order-$2k$ unstructured barycentric
form~\cref{eqn:MechBaryAsUnstruct} resulting from a second-order barycentric
form has only $k$ unique function value
parameters $h_{j}$, with $j = 1,\ldots, k$.
While this property may seem special, we show in the following that any
degree-$2k$ strictly proper rational function admits a barycentric
representation with only $k$ unique function value parameters.

\begin{lemma} \label{lmm:ValueAttainedTwiceUnique}
  Let $f(s) = \frac{n(s)}{d(s)}$ be a strictly proper, irreducible rational
  function, where the degree of $f(s)$ is $k \geq 2$.
  Then, for all but at most $2k - 2$ values of $h \in \C$, there exist points
  $\lambda_{1} \in \C$ and $\lambda_{2} \in \C$ so that
  $\lambda_{1} \neq \lambda_{2}$ and $f(\lambda_{1}) = f(\lambda_{2}) = h$ hold.
\end{lemma}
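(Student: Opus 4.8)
The plan is to reduce the statement to counting the values $h$ at which the equation $f(s) = h$ fails to have $k$ distinct solutions. First I would fix $h \in \C$ with $h \neq 0$ and clear denominators. Since $f = n/d$ is strictly proper of degree $k$, the numerator $n$ has degree at most $k-1$ while $d$ has degree exactly $k$, so $f(s) = h$ is equivalent to the polynomial equation $n(s) - h\,d(s) = 0$, which has degree exactly $k$ because the leading term comes from $-h\,d$ and $h \neq 0$. By irreducibility, no root of this polynomial is a pole of $f$, so each of its $k$ roots (counted with multiplicity) is a genuine solution of $f(s) = h$. Hence $f(s) = h$ has exactly $k \geq 2$ solutions with multiplicity, and as soon as these are all distinct we obtain two points $\lambda_{1} \neq \lambda_{2}$ with $f(\lambda_{1}) = f(\lambda_{2}) = h$, as required.

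A value $h \neq 0$ is therefore exceptional only when $n(s) - h\,d(s)$ has a repeated root, i.e.\ shares a root $\lambda$ with its derivative $n'(s) - h\,d'(s)$. Eliminating $h$ from the two conditions $n(\lambda) = h\,d(\lambda)$ and $n'(\lambda) = h\,d'(\lambda)$ gives $n'(\lambda)\,d(\lambda) - n(\lambda)\,d'(\lambda) = 0$; that is, $\lambda$ must be a root of the Wronskian-type polynomial
\[
  w(s) \coloneqq n'(s)\,d(s) - n(s)\,d'(s).
\]
A direct inspection of the leading terms of $n'd$ and $nd'$ shows $\deg w \leq 2k-2$, and $w \not\equiv 0$ since $w = d^{2} f'$ and $f$ is non-constant. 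Consequently $w$ has at most $2k-2$ roots, and every exceptional $h \neq 0$ has the form $h = f(\lambda)$ for such a root $\lambda$, producing at most $2k-2$ exceptional values. For all remaining $h \neq 0$ the polynomial $n - h\,d$ has $k$ distinct roots and the conclusion holds.

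The remaining value $h = 0$ is where I expect the main obstacle to lie, because strict properness forces $f(\infty) = 0$, so the fiber over $0$ behaves differently from every other fiber: $f(s) = 0$ is equivalent to $n(s) = 0$, which has only $\deg n \leq k-1$ finite roots, and if these coincide or are too few then $0$ admits fewer than two distinct finite preimages. The way I would absorb this into the count is to observe that when $n$ has a repeated root, that root is already a finite critical point of $f$, so $0$ is among the critical values counted above and contributes nothing new; the delicate point is the degenerate configuration of a single \emph{simple} finite zero, where $0$ is genuinely exceptional yet is not a finite critical value. Here one must exploit that a low-degree numerator forces the degree of $w$ to drop, so that the slack created in the bound $\deg w \leq 2k-2$ exactly offsets the extra value at infinity. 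Carefully relating the degree deficiency of $n$ to the ramification of $f$ at infinity, and thereby showing the fiber over $0$ does not push the total past $2k-2$, is the crux of the argument.
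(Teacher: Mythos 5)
For $h \neq 0$ your argument is correct and is essentially the paper's: the paper likewise identifies exceptional values with critical values, i.e.\ with images of zeros of the numerator $w(s) = n'(s)\,d(s) - n(s)\,d'(s)$ of $f'$, and bounds their number by $\deg w \leq 2k-2$. Your elimination-style write-up is, if anything, more careful, since you verify that the roots of $n - h\,d$ are not poles, that its degree is exactly $k$ when $h \neq 0$, and that $w \not\equiv 0$.

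The gap is the one you name yourself: the fiber over $h = 0$ is never handled, so the proposal is not a complete proof. But your instinct that this is the crux is exactly right, and pushing on it reveals more than you may expect. The case cannot be repaired in general, because the lemma as stated is \emph{false} for $k = 2$: the function $f(s) = s/(s^{2}+1)$ is strictly proper, irreducible and of degree $2$, and its exceptional values are $h = 0$ (only preimage $s = 0$), $h = \tfrac{1}{2}$ (only preimage $s = 1$, a double root of $\tfrac{1}{2}s^{2} - s + \tfrac{1}{2}$) and $h = -\tfrac{1}{2}$ (only preimage $s = -1$), i.e.\ three values, exceeding $2k - 2 = 2$. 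The paper's own proof overlooks precisely the point you flagged: it opens by asserting that a strictly proper degree-$k$ rational function attains every value in $\C$ exactly $k$ times counting multiplicity, which fails at $h = 0$ since strict properness forces $\deg n \leq k-1$ (the missing preimages sit at $s = \infty$), so its inference ``unique preimage $\Rightarrow$ zero of multiplicity $k$ $\Rightarrow$ critical point'' breaks down there. For $k \geq 3$, your ``slack'' idea does close the gap cleanly: if $n$ has a repeated root, then $0$ is itself a critical value, consuming a zero of $w$ distinct from the zeros consumed by the nonzero exceptional values, so the total stays within $\deg w \leq 2k-2$; otherwise $n$ has at most one root and it is simple, so $\deg n \leq 1$, hence $\deg w = \deg n + k - 1 \leq k \leq 2k-3$, and adding the single extra value $h = 0$ still gives at most $2k-2$. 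So your route, completed as you sketch, proves the lemma for $k \geq 3$ and shows that for $k = 2$ the statement itself needs repair (e.g.\ weakening the bound to $2k-1$ or assuming $k \geq 3$); note that Corollary~\ref{cor:UnstructBaryWithkValues} is unaffected, since it only needs the exceptional set to be finite.
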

\begin{proof}
  Let $h \in \C$ and note that a degree-$k$ strictly proper rational function
  attains each value in $\C$ exactly $k$ times (counting multiplicity).
  If $\lambda_{1} \in \C$ is the only point such that $f(s) = h$, then
  \begin{equation}
    g(s) = f(s) - h = \frac{n(s)}{d(s)} - h
  \end{equation}
  has a zero of multiplicity $k$ at $s = \lambda_{1}$.
  Hence, $f'(\lambda_{1}) = g'(\lambda_{1}) = 0$.
  Since
  \begin{equation}
    f'(s) = \frac{d(s) n'(s) - n(s) d'(s)}{(d(s))^{2}}
  \end{equation}
  has the numerator degree $2k - 2$, the derivative $f'(s)$ has $2k - 2$ zeros,
  counting multiplicity.
  These zeros are shared between $f'(s)$ and $g'(s)$.
  Therefore, there are at most $2k - 2$ values $h$, which may be attained by
  $f$ at only one point in $\C$.
  In particular, those possible values are $f(\lambda_{i})$ where
  $f'(\lambda_{i}) = 0$.
  This concludes the proof.
\end{proof}

\Cref{lmm:ValueAttainedTwiceUnique} shows that a degree-$k$ strictly proper
rational function attains all but at most $2k - 2$ values at least at two
distinct points in $\C$.
To allow easy comparison between arbitrary strictly proper rational functions
in barycentric form and those rational functions, which can be represented with
the second-order barycentric form, a direct consequence of
\Cref{lmm:ValueAttainedTwiceUnique} is presented below.

\begin{corollary} \label{cor:UnstructBaryWithkValues}
  Let $f(s)$ has the form
  \begin{equation}
    f(s) = \frac{\sum_{j = 1}^{2k} \frac{h_{j} w_{j}}{s - \lambda_{j}}}%
      {1 + \sum_{i = j}^{2k} \frac{w_{j}}{s - \lambda_{j}}}.
  \end{equation}
  Then there exist barycentric parameters
  \begin{equation}
    \tilde{\lambda}_{j} \in \C, \quad
    \tilde{\sigma}_{j} \in \C, \quad \text{and} \quad
    \tilde{h}_{j} \in \C, \quad \text{for}\quad j = 1, \ldots, k
  \end{equation}
  and
  \begin{equation}
    \tilde{w}_{j} \in \C, \quad \text{for}\quad j = 1, \ldots, 2k
  \end{equation}
  such that
  \begin{equation} \label{eqn:UnstructInCorrolary}
    f(s) = \frac{\sum_{j = 1}^{k} \left( \frac{\tilde{h}_{j} \tilde{w}_{j}}%
      {s - \tilde{\lambda}_{j}} + \frac{\tilde{h}_{j} \tilde{w}_{j + k}}%
      {s - \tilde{\sigma}_{j}} \right)}%
      {1 + \sum_{j = 1}^{k} \left( \frac{\tilde{w}_{j}}%
      {s - \tilde{\lambda}_{j}} + \frac{\tilde{w}_{j + k}}%
      {s - \tilde{\sigma}_{j}} \right)}
    \end{equation}
    holds.
\end{corollary}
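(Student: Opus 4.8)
The plan is to exploit the freedom in choosing the support points of a barycentric representation, together with \Cref{lmm:ValueAttainedTwiceUnique}, to force the function values to repeat in pairs. Since $f$ is a strictly proper irreducible rational function of degree $2k$, I would first invoke \Cref{lmm:ValueAttainedTwiceUnique} applied with the degree equal to $2k$: all but at most $4k - 2$ values $h \in \C$ are attained by $f$ at two distinct points. As the exceptional set is finite while $\C$ is infinite, I can select $k$ \emph{distinct} values $\tilde{h}_{1}, \ldots, \tilde{h}_{k} \in \C$ that each lie outside this set, so that for every $j$ there are two distinct points $\tilde{\lambda}_{j} \neq \tilde{\sigma}_{j}$ with $f(\tilde{\lambda}_{j}) = f(\tilde{\sigma}_{j}) = \tilde{h}_{j}$.

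Next I would verify that the resulting $2k$ candidate support points $\tilde{\lambda}_{1}, \ldots, \tilde{\lambda}_{k}, \tilde{\sigma}_{1}, \ldots, \tilde{\sigma}_{k}$ are admissible for \Cref{lmm:BaryFormRecoverAnything}, that is, that they are mutually distinct and contain no pole of $f$. The distinctness is automatic: within each pair it holds by construction, and across pairs it follows from $f$ being single-valued, since $\tilde{h}_{i} \neq \tilde{h}_{j}$ forces their preimages to be disjoint. None of the points is a pole of $f$, because $f$ takes the finite value $\tilde{h}_{j}$ there, whereas $f$ is infinite at its poles.

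With the support points fixed, I would apply \Cref{lmm:BaryFormRecoverAnything} to represent the degree-$2k$ function $f$ in unstructured barycentric form over these $2k$ support points. The construction pins the function value at each support point to the value $f$ attains there, namely $\tilde{h}_{j}$ at both $\tilde{\lambda}_{j}$ and $\tilde{\sigma}_{j}$; labeling the corresponding weights $\tilde{w}_{j}$ (for $\tilde{\lambda}_{j}$) and $\tilde{w}_{j+k}$ (for $\tilde{\sigma}_{j}$) then produces exactly \cref{eqn:UnstructInCorrolary}.

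The genuinely delicate point is not any calculation but the combination of the two guarantees: \Cref{lmm:ValueAttainedTwiceUnique} supplies values attained at two distinct points, and \Cref{lmm:BaryFormRecoverAnything} permits an arbitrary choice of distinct, pole-free support points. The only thing to watch is that enough values are attained twice to furnish $k$ of them, which is ensured by the finiteness ($\leq 4k - 2$) of the exceptional set. A minor caveat worth flagging is the standing assumption that $f$ genuinely has degree $2k$ and is irreducible, so that \Cref{lmm:ValueAttainedTwiceUnique} applies with degree $2k$; this is consistent with the paper's blanket irreducibility assumption.
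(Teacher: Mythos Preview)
Your proposal is correct and follows essentially the same approach as the paper: invoke \Cref{lmm:ValueAttainedTwiceUnique} to obtain $k$ values each attained at two distinct points, then apply \Cref{lmm:BaryFormRecoverAnything} with those $2k$ points as support points. Your version is in fact more careful than the paper's, which does not explicitly verify distinctness of the $2k$ support points or that they avoid the poles of $f$.
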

\begin{proof}
  By \Cref{lmm:ValueAttainedTwiceUnique}, for any given degree-$2k$ strictly
  proper rational function we can always choose new support points
  $\tilde{\lambda}_{j}$ and $\tilde{\sigma}_{j}$ such that
  $f(\tilde{\lambda}_{j}) = f(\tilde{\sigma}_{j}) = \tilde{h}_{j}$,
  for $j = 1, \ldots, k$.
  Then, since the unstructured barycentric form can recover any strictly proper
  rational function that does not have poles at its support points
  (\Cref{lmm:BaryFormRecoverAnything}), there must exist
  $\tilde{w}_{j}$, with $j = 1,\ldots, 2k$, such that $f(s)$ can be written
  as~\cref{eqn:UnstructInCorrolary}.
\end{proof}

\Cref{cor:UnstructBaryWithkValues} shows that any degree-$2k$ strictly proper
rational function has a representation in barycentric form with $2k$ distinct
support points but only $k$ distinct function value parameters $h_{j}$,
with $j = 1, \ldots, k$, each appearing exactly twice.
Comparing the expression~\cref{eqn:UnstructInCorrolary}, which can represent
any degree-$2k$ strictly proper  rational function,
with~\cref{eqn:MechBaryAsUnstruct}, which can represent any strictly proper
rational function also representable via the second-order barycentric form, we
observe that the main difference is that~\cref{eqn:UnstructInCorrolary} has
$2k$ (possibly distinct) barycentric weights $\tilde{w}_{j}$,
for $j = 1, \ldots, 2k$, while only $k$ barycentric weights appear
in~\cref{eqn:MechBaryAsUnstruct}.
In the following theorem, we use this observation to show when an arbitrary
strictly proper degree-$2k$ rational function~\cref{eqn:UnstructInCorrolary}
can be represented by the second-order barycentric form.

\begin{theorem} \label{thm:secondOrderRestricted}
  Assume that for a degree-$2k$ strictly proper rational function $\Hr^{(2k)}$
  in unstructured barycentric form, the $2k$ unique support points are chosen
  so that there are only $k$ unique function value parameters $h_{j}$, each
  appearing exactly twice so that
  \begin{equation} \label{eqn:UnstructBary_LamsAndSigs}
    f(s) = \frac{\sum_{j = 1}^{k} \left(\frac{h_{j} w_{j}}{s - \lambda_{j}} +
      \frac{h_{j} w_{j + k}}{s - \sigma_{j}}\right)}%
      {1 + \sum_{j = 1}^{k} \left( \frac{w_{j}}{s - \lambda_{j}} +
      \frac{w_{j + k}}{s - \sigma_{j}}\right)}.
  \end{equation}
  Then, \Cref{eqn:UnstructBary_LamsAndSigs} may be represented in the
  second-order barycentric form~\cref{eqn:MechBaryForm} only if
  \begin{equation}
    w_{j} = -w_{j + k}
  \end{equation}
  holds for the unstructured barycentric weights.
\end{theorem}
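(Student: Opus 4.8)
The plan is to argue by uniqueness of the unstructured barycentric representation with prescribed support points, combined with the explicit conversion formula of \Cref{lmm:2ndOrderIsUnstructured}. Suppose that the function $f$ in \cref{eqn:UnstructBary_LamsAndSigs} can be written in the second-order barycentric form \cref{eqn:MechBaryForm} using the same support points $\lambda_{j}$, quasi-support points $\sigma_{j}$, and function values $h_{j}$ that already appear in \cref{eqn:UnstructBary_LamsAndSigs}, together with some second-order weights $w_{j}'$, for $j = 1, \ldots, k$. This is the natural candidate: the pairing of $\lambda_{j}$ with $\sigma_{j}$ through the shared value $h_{j}$ is precisely the quasi-support relation $f(\lambda_{j}) = f(\sigma_{j}) = h_{j}$ enforced by \cref{eqn:MechBaryForm}.

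First I would apply \Cref{lmm:2ndOrderIsUnstructured} to this candidate second-order form. The lemma rewrites it as an unstructured barycentric form over the same $2k$ support points $\lambda_{1}, \ldots, \lambda_{k}, \sigma_{1}, \ldots, \sigma_{k}$, in which the denominator weight attached to $(s - \lambda_{j})^{-1}$ equals $\hat{w}_{j}' = w_{j}' / (\lambda_{j} - \sigma_{j})$, while the weight attached to $(s - \sigma_{j})^{-1}$ equals $-\hat{w}_{j}'$. Thus $f$ would possess two unstructured barycentric representations over the identical set of $2k$ distinct support points: the given one in \cref{eqn:UnstructBary_LamsAndSigs}, with denominator weights $w_{j}$ on $\lambda_{j}$ and $w_{j+k}$ on $\sigma_{j}$, and the converted one, with weights $\hat{w}_{j}'$ on $\lambda_{j}$ and $-\hat{w}_{j}'$ on $\sigma_{j}$.

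The crux is that for a fixed irreducible strictly proper rational function of degree $2k$ and a fixed set of $2k$ distinct support points, none of which is a pole of $f$, the unstructured barycentric weights are uniquely determined. I would make this explicit from the proof of \Cref{lmm:BaryFormRecoverAnything}: clearing the common factor $\prod_{l}(s - \lambda_{l})(s - \sigma_{l})$ turns the barycentric denominator into a monic degree-$2k$ polynomial that must coincide with the monic denominator of $f$, so the weights are recovered as its (uniquely determined) residues at the support points; equivalently, the weight vector is the unique solution of the full-rank Loewner system in that proof. I would then equate the two denominators coefficientwise in the linearly independent partial-fraction basis $\{1, (s - \lambda_{j})^{-1}, (s - \sigma_{j})^{-1}\}$.

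Comparing the denominator weights then yields $w_{j} = \hat{w}_{j}'$ from the $\lambda_{j}$-terms and $w_{j+k} = -\hat{w}_{j}'$ from the $\sigma_{j}$-terms, for each $j = 1, \ldots, k$. Eliminating $\hat{w}_{j}'$ gives $w_{j+k} = -w_{j}$, that is $w_{j} = -w_{j+k}$, which is the claimed necessary condition. I expect the main obstacle to be the careful justification of the uniqueness invoked in the previous paragraph, since \Cref{lmm:BaryFormRecoverAnything} is stated as an existence result and the uniqueness must be extracted from its proof; the remaining coefficient comparison is routine. A minor point to dispatch is that $\hat{w}_{j}'$ is well defined because $\lambda_{j} \neq \sigma_{j}$, which holds since the $2k$ support points are assumed distinct.
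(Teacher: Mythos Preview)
Your argument is correct, but it takes a more structural route than the paper's. The paper simply combines each pair of partial fractions in \cref{eqn:UnstructBary_LamsAndSigs} over the common denominator $(s-\lambda_{j})(s-\sigma_{j})$, obtaining numerators $s(w_{j}+w_{j+k}) - (\sigma_{j}w_{j}+\lambda_{j}w_{j+k})$; for these to match the constant numerators $w'_{j}$ required by \cref{eqn:MechBaryForm}, the coefficient of $s$ must vanish, forcing $w_{j}+w_{j+k}=0$. Your approach instead invokes \Cref{lmm:2ndOrderIsUnstructured} to expand the putative second-order representation back into unstructured form, and then appeals to uniqueness of the barycentric weights for fixed support points to read off $w_{j}=\hat{w}'_{j}=-w_{j+k}$. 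The paper's route is shorter and entirely elementary, but leaves the term-by-term comparison implicit; your route is longer, yet it makes precise exactly why no cancellation between different $j$-terms can occur, by grounding the comparison in the linear independence of the partial-fraction basis (equivalently, the full rank of the Loewner system from \Cref{lmm:BaryFormRecoverAnything}). Both arguments rely on the same tacit reading of the theorem, namely that the second-order representation in question uses the given $\lambda_{j},\sigma_{j},h_{j}$.
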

\begin{proof}
  Combining the two partial fractions in each term of the sums
  of~\cref{eqn:UnstructBary_LamsAndSigs} gives
  \begin{equation}
    f(s) = \frac{\sum_{j = 1}^{k} h_{j} \frac{s (w_{j} + w_{j + k}) -
      (\sigma_{j} w_{j} + \lambda_{j} w_{j + k})}%
      {(s - \lambda_{j}) (s - \sigma_{j})}}%
      {1 + \sum_{j = 1}^{k} \frac{s(w_{j} + w_{j + k}) -
      (\sigma_{j} w_{j} + \lambda_{j} w_{j + k})}{(s - \lambda_{j})
      (s - \sigma_{j})}}.
  \end{equation}
  To eliminate the linear term in the numerators of the partial fraction sums
  it is necessary that $w_{j} = -w_{j + k}$ as claimed.
\end{proof}

\Cref{thm:secondOrderRestricted} shows that, in general, we \emph{do not}
expect that a degree-$2k$ strictly proper rational function can be represented
by the second-order barycentric form.
In the following section, we use these results to develop heuristics for the
accuracy of our second-order AAA methods.


\subsection{Heuristics on accuracy of second-order AAA methods}%
\label{sec:AccuracyHeuristics}

In this section, we present upper and lower bound heuristics for the error of
our second-order AAA algorithms developed in \Cref{sec:SecondOrderAAAs}.
Specifically, we provide approximate bounds on the error of an order-$k$
approximation obtained by our second-order AAA methods based on the error of
the unstructured (first-order) AAA approximations of degree $k$ and $2k$.
The heuristics we develop in this section are threefold: 
\begin{enumerate}
  \item An order-$2k$ \AAA{} model should have a lower approximation error than
    any order-$k$ second-order AAA model (\Cref{sec:LowerBoundHeuristc}).

  \item An order-$k$ second-order AAA model constructed via \AAAFull{} or
    \AAAFullNL{} should have a lower approximation error than an order-$k$
    \AAA{} model (\Cref{sec:UpperBoundHeuristc}).
  
  \item An order-$k$ \AAALin{} model should have an approximation error nearly
    equal to an order-$k$ \AAA{} model, if the quasi-support points 
    $\sigma_{j}$ are chosen far enough away from the data in $\M$
    (\Cref{sec:UpperBoundHeuristc}).
\end{enumerate}


\subsubsection{A lower bound error heuristic}%
\label{sec:LowerBoundHeuristc}

First, we establish a lower bound heuristic for the error of our second-order
\AAA{} methods at iteration $k$.
Recall from \Cref{sec:SwitchModelStructure} that not every order-$2k$
unstructured barycentric model can be represented as an order-$k$ second-order
barycentric model.
Thus, at iteration $2k$, \AAA{} is forming approximations from the larger
subspace of all degree-$2k$ strictly proper rational functions compared to the
second-order \AAA{} methods at iteration $k$, which construct approximations
from a strict subset of all degree-$2k$ strictly proper rational functions.
Therefore, we expect that an order-$2k$ strictly properrational approximation
produced by \AAA{} should have lower error than a degree-$2k$ strictly proper
rational approximation produced by \AAAFull, \AAALin or \AAAFullNL.

However, we note that this observation remains a heuristic and not a strict
bound due to the linearized least-squares
residual~\cref{eqn:linearizedError_unstruct}.
Since \AAA{} does not minimize the true (nonlinear) least-squares error, we
observe that the second-order \AAA{} methods at iteration $k$ can have smaller
errors than \AAA{} at iteration $2k$.
We observe this most often for \AAAFullNL, since \AAAFullNL{} optimizes the
true nonlinear least-squares residual.

    
\subsubsection{An upper bound error heuristic}%
\label{sec:UpperBoundHeuristc}

Now, we establish a heuristic for the upper bound of the error of our
second-order \AAA{} methods at iteration $k$.
Recall that any order-$k$ second-order barycentric model can be rewritten as
\begin{equation} \label{eqn:MechBaryAsUnstruct_heuristic}
  \Hso(s) = \frac{\sum_{j = 1}^{k} \frac{h_{j} w_{j}}%
    {(s - \lambda_{j}) (s - \sigma_{j})}}%
    {1 + \sum_{j = 1}^{k} \frac{w_{j}}{(s - \lambda_{j}) (s - \sigma_{j})}} 
  = \frac{\sum_{j = 1}^{k} \frac{w_{j}}{\lambda_{j} - \sigma_{j}}
    \left( \frac{h_{j}}{s - \lambda_{j}} - \frac{h_{j}}{s - \sigma_{j}}
    \right)}{1 + \sum_{j = 1}^{k} \frac{w_{j}}{\lambda_{j} -\sigma_{j}} 
    \left(\frac{1}{s - \lambda_{j}} - \frac{1}{s - \sigma_{j}}\right)};
\end{equation}
cf. \Cref{lmm:2ndOrderIsUnstructured}.
This equivalence leads to a modified method to recover the barycentric weights
of a \AAALin{} model.
Following similar residual calculations as in \Cref{sec:AAA_Unstructured}, we
define the Loewner-like matrix $\LowUStoSO \in \C^{(M - k) \times k}$ as
\begin{equation} \label{eqn:LoewnerMat_US2SO}
  \left[ \LowUStoSO \right]_{i, j} = \frac{g_{i} - h_{j}}%
    {\mu_{i} - \lambda_{j}} - \frac{g_{i} - h_{j}}{\mu_{i} - \sigma_{j}},
\end{equation}
with $(\mu_{i}, g_{i}, \eta_{i}) \in \M$ and
$(\lambda_{j}, h_{j}, \sigma_{j}) \in \PSO$,
for $i = 1, \ldots, M - k$ and $j = 1, \ldots, k$.
Then, the coefficients $w_{1}, \ldots, w_{k}$ that minimize the separable
residual vector~\cref{eqn:separableObjFun} for fixed
$\sigma_{1}, \ldots, \sigma_{k}$ may be recovered by either
computing~\cref{eqn:LinearLSSolution_SOLAAA} with the Loewner-like matrix
$\LowSO$ defined in~\cref{eqn:LoewnerMat_SO}, or by computing
\begin{equation} \label{eqn:LS_US2SO}
  \hat{\mw} = -\left( \LowUStoSO \right)^{\dagger} \mg
\end{equation}
and recovering the barycentric weights for the second-order model via
\begin{equation}
  w_{j} = (\lambda_{j} - \sigma_{j}) \hat{w}_{j}.
\end{equation}
This reveals the connection between \AAALin{} and \AAA.

\begin{theorem} \label{thm:SOConvergeToFO}
  Let $\LowUS$ be defined as in~\cref{eqn:LoewnerMat_USAAA} for the parameters
  $\lambda_{j}, h_{j}$, with $j = 1$, $\ldots, k$, and the data
  $\mu_{i}, g_{i}$, with $i = 1, \ldots, M - k$.
  For the same parameters and data let $\LowUStoSO$ be defined as
  in~\cref{eqn:LoewnerMat_US2SO} using the additional quasi-support points
  $\sigma_{1}, \ldots, \sigma_{k}$.
  Let
  \begin{equation}
    \Hr = \frac{\sum_{j = 1}^{k}\frac{h_{j} w_{j}}{s - \lambda_{j}}}%
      {1 + \sum_{j = 1}^{k} \frac{w_{j}}{s - \lambda_{j}}}
  \end{equation}
  be the unstructured barycentric model obtained by
  solving~\cref{eqn:WeightedLS_USAAA} with $\LowUS$ for the barycentric weights
  $w_{1}, \ldots, w_{k}$.
  Further, let
  \begin{equation}
    \HrSO = \frac{\sum_{j = 1}^{k} \frac{h_{j} \hat{w}_j
      (\lambda_{j} - \sigma_{j})}{(s - \lambda_{j}) (s - \sigma_{j})}}%
      {1 + \sum_{j = 1}^{k} \frac{\hat{w}_{j} (\lambda_{j} - \sigma_{j})}%
      {(s - \lambda_{j}) (s - \sigma_{j})}}
  \end{equation}
  be the second-order barycentric model obtained by solving~\cref{eqn:LS_US2SO}
  with $\LowUStoSO$ for the linear parameters
  $\hat{w}_{1}, \ldots, \hat{w}_{k}$.
  Then, if $\LowUS^{\dagger}$ has constant rank in some open neighborhood, we
  have that
  \begin{equation}
    \lim_{\min\limits_{j} (\lvert \real(\sigma_{j}) \rvert) \to \infty}
      \lVert \Hr - \HrSO \rVert_{\Linf} = 0,
  \end{equation}
  where $\rVert . \lVert_{\Linf}$ denotes the $\Linf$-norm.
\end{theorem}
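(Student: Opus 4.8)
The plan is to put both rational functions into a common analytic form, show that the fitted linear coefficients converge, and then promote this coefficient convergence to uniform convergence of the functions themselves. First I would rewrite the second-order model $\HrSO$ in the unstructured partial-fraction form using \Cref{lmm:2ndOrderIsUnstructured}: since its second-order weights are $\hat{w}_{j}(\lambda_{j} - \sigma_{j})$, the associated unstructured weights are exactly the $\hat{w}_{j}$, so that
\begin{equation*}
  \HrSO(s) = \frac{\sum_{j=1}^{k} \hat{w}_{j}\left(\frac{h_{j}}{s-\lambda_{j}} - \frac{h_{j}}{s-\sigma_{j}}\right)}{1 + \sum_{j=1}^{k} \hat{w}_{j}\left(\frac{1}{s-\lambda_{j}} - \frac{1}{s-\sigma_{j}}\right)}.
\end{equation*}
This places $\HrSO$ and $\Hr$ in the same functional form, differing only in their weights ($\hat{w}_{j}$ versus $w_{j}$) and in the presence of the extra $\sigma_{j}$-terms. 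The elementary observation that drives everything is that as $\min_{j}(\lvert\real(\sigma_{j})\rvert) \to \infty$, each quantity $\tfrac{g_{i} - h_{j}}{\mu_{i} - \sigma_{j}}$ tends to zero, and hence $\LowUStoSO \to \LowUS$ entrywise.

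Next I would establish convergence of the linear parameters. Both weight vectors are pseudoinverse solutions of the respective least-squares problems, namely $\hat{\mw} = -(\LowUStoSO)^{\dagger}\mg$ and $\mw = -\LowUS^{\dagger}\mg$. This is exactly where the constant-rank hypothesis is needed: the Moore--Penrose pseudoinverse $\mA \mapsto \mA^{\dagger}$ is continuous at $\LowUS$ precisely when the matrix rank is locally constant, so the convergence $\LowUStoSO \to \LowUS$ forces $(\LowUStoSO)^{\dagger} \to \LowUS^{\dagger}$ and therefore $\hat{w}_{j} \to w_{j}$ for every $j$. Without the assumed local rank constancy this conclusion can fail, so this hypothesis is not a technicality but the genuine content of the step.

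Finally I would transfer the parameter convergence into $\Linf$-convergence of the functions. Writing $\HrSO - \Hr$ as a single fraction with numerator $\tilde{N}D - N\tilde{D}$ and denominator $\tilde{D}D$, where $N, D$ and $\tilde{N}, \tilde{D}$ are the numerator/denominator pairs displayed above, I would argue that each extra term $\tfrac{\hat{w}_{j}h_{j}}{s-\sigma_{j}}$ and $\tfrac{\hat{w}_{j}}{s-\sigma_{j}}$ vanishes uniformly on the imaginary axis: since $\real(\sigma_{j}) \to -\infty$, the distance $\lvert s - \sigma_{j}\rvert$ is bounded below by $\lvert\real(\sigma_{j})\rvert$ on the evaluation domain while $\hat{w}_{j}$ stays bounded (it converges to $w_{j}$). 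Combined with $\hat{w}_{j} \to w_{j}$, this yields $\tilde{N} \to N$ and $\tilde{D} \to D$ uniformly. Because $\Hr$ is assumed to lie in $\Linf$, its denominator $D$ is bounded away from zero on the imaginary axis; hence for $\min_{j}(\lvert\real(\sigma_{j})\rvert)$ large enough, $\tilde{D}$ is likewise bounded below, and the quotient converges uniformly to zero, giving $\lVert \Hr - \HrSO \rVert_{\Linf} \to 0$.

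The main obstacle is the middle step, the control of the pseudoinverse, rather than the final estimate: the map to $\mA^{\dagger}$ is discontinuous exactly at matrices whose rank drops in every neighborhood, and it is only the constant-rank assumption that rules this out and guarantees $\hat{\mw} \to \mw$. Once that is in hand, the $\Linf$ bookkeeping is routine, since the only denominator that is held fixed, $D$, is pole-free on the evaluation domain and therefore uniformly bounded below.
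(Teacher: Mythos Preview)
Your proposal is correct and follows essentially the same route as the paper: both arguments observe that $\LowUStoSO \to \LowUS$ entrywise, invoke continuity of the Moore--Penrose pseudoinverse under the constant-rank hypothesis to obtain $\hat{w}_{j} \to w_{j}$, and then upgrade this parameter convergence to convergence of the rational functions on the imaginary axis. Your final step is in fact slightly more explicit than the paper's, which passes from pointwise to uniform convergence with less justification; one small imprecision is that $D$ is not literally ``bounded below'' on $\imunit\R$ (it blows up at the $\lambda_{j}$), but what you need---and what the $\Linf$ assumption supplies---is that $D$ stays bounded away from zero there.
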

\begin{proof}
  First, observe that if all $\lvert \real(\sigma_{j}) \rvert \to \infty$,
  then $\LowUStoSO \to \LowUS$.
  Since $(\LowUS)^{\dagger}$ has constant rank in some open neighborhood and
  the pseudoinverse is a continuous function whenever the matrix has constant
  rank~\cite{Ste69}, we also have $\LowUStoSO^{\dagger} \to \LowUS^{\dagger}$.
  Thus, as $\min(\lvert \real(\sigma_{j}) \rvert) \to \infty$, the second-order
  barycentric weights recovered via~\cref{eqn:LS_US2SO} converge to the
  unstructured barycentric weights recovered via~\cref{eqn:WeightedLS_USAAA},
  that is, $\hat{w}_{j} \to {w}_{j}$.
  Evaluating the second-order barycentric form at any $\imunit\omega$ for
  $\omega \in \R$ where each $\lvert \real(\sigma_j) \rvert$ is large, we have
  \begin{equation}
    \HrSO(\imunit \omega) = \frac{\sum_{j = 1}^{k} \frac{h_{j} \hat{w}_{j}
      (\lambda_{j} - \sigma_{j})}{(\imunit \omega - \lambda_{j})%
      (\imunit \omega - \sigma_{j})}}{1 + \sum_{j = 1}^{k}
      \frac{\hat{w}_{j} (\lambda_{j} - \sigma_{j})}%
      {(\imunit \omega - \lambda_{j}) (\imunit \omega - \sigma_{j})}}
    = \frac{\sum_{j = 1}^{k} \frac{h_{j} \hat{w}_{j}}%
      {(\imunit \omega - \lambda_{j})}(1 + \epsilon_{j})}%
      {1 + \sum_{j = 1}^{k} \frac{\hat{w}_{j}}{(\imunit \omega - \lambda_{j})}
      (1 + \epsilon_{j})},
  \end{equation}
  where $\epsilon_{j} \in \C$ can be made arbitrarily small by increasing
  $\lvert \real(\sigma_{j}) \rvert$.
  Therefore, for any $\imunit \omega$, we have that
  \begin{equation}
    \lim_{\min\limits_{j}(\lvert \real(\sigma_{j}) \rvert) \to \infty}
      (\Hr(\imunit \omega) - \HrSO(\imunit \omega)) \to 0
  \end{equation}
  holds, that is, $\HrSO$ converges to $\Hr$ uniformly on the imaginary axis
  and, therefore,
  \begin{equation}
    \lim_{\min\limits_{j}(\lvert \real(\sigma_{j}) \rvert) \to \infty}
      \lVert \Hr - \HrSO \rVert_{\Linf} \to 0,
  \end{equation}
  which proves the result.
\end{proof}

The implication of \Cref{thm:SOConvergeToFO} is that if the quasi-support points
$\sigma_{j}$ in the second-order barycentric formare taken to be far in the
left half plane, we will approximately recover the unstructured barycentric
model formed with the same $\lambda_{j}$ and $h_{j}$ parameters.
This provides an upper bound heuristic on the error in our second-order AAA
models.
An order-$k$ second-order AAA approximation to a data set $\M$ attained by
either \AAAFull{} or \AAAFullNL{} should be able to perform as well or better
than the unstructured order-$k$ \AAA{} approximation to the same data.
Our results in~\Cref{sec:numerics} show that both \AAAFull{} and \AAAFullNL{}
typically greatly outperform the unstructured \AAA{} algorithm for the same
model order~$k$.
Since in \AAALin, the quasi-support points $\sigma_{j}$ are only initialized
(and not moved) far in the left-half plane, \Cref{thm:SOConvergeToFO} also
suggests that \AAALin{} should have accuracy comparable to \AAA.
Indeed, we empirically observe this behavior in our numerical experiments in
\Cref{sec:numerics}.


\section{Constructing real-valued models}%
\label{sec:realification}

In applications that involve the time domain simulation of dynamical systems,
it is typically required that the data-driven model has a real state-space
representation, i.e., all matrices in~\cref{eqn:Lam_w_h_def}
or~\cref{eqn:MDK_def} have real entries.
We can enforce that models constructed by any of the algorithms described in
\Cref{sec:AAA_Unstructured,sec:SecondOrderAAAs} admit a real state-space
realization by requiring that the sets of parameters $\lambda_{j}$, $h_{j}$, and
$w_{j}$ (and $\sigma_{j}$ for the second-order methods) are all closed under
complex conjugation.
For the second-order methods, we also require the support points and quasi
support points to have the same conjugacy pattern, that is, if
$\lambda_{j} = \overline{\lambda}_{j + 1}$, then we also have
$\sigma_{j} = \overline{\sigma}_{j + 1}$.
Under these conditions, there exist state-space transformations that
allow to transform all matrices in~\cref{eqn:Lam_w_h_def,eqn:MDK_def}
into real form.
The remainder of this section provides the necessary modifications to the 
algorithms presented in \Cref{sec:AAA_Unstructured,sec:SecondOrderAAAs} to
ensure that real models are recovered.

For simplicity of presentation, in what follows we drop the iteration index~$k$
and assume that our data set $\M$ contains only sample points with positive
imaginary part, that is, if $(\mu_{i}, g_{i}, \eta_{i}) \in \M$, then
$\imag(\mu_{i}) > 0$.
Additionally, we define here the data and parameter sets needed in the
upcoming sections.
Recall that the set $\Pset$ contains the barycentric parameters of $\Hr$ except
the barycentric weights.
We now define the set $\PConj$ to contain the conjugates of each element in
$\Pset$ so that
\begin{equation}
  \PConj = \{(\overline{\lambda}_{1}, \overline{h}_{1}), \ldots ,
    (\overline{\lambda}_{k}, \overline{h}_{k})\}.
\end{equation}
Similarly, for the second-order methods we define the set
\begin{equation}
  \PSOConj = \{
    (\overline{\lambda}_{1}, \overline{h}_{1}, \overline{\sigma}_{1}), \ldots,
    (\overline{\lambda}_{k}, \overline{h}_{k}, \overline{\sigma}_{k}) \}.
\end{equation}
Additionally, we define the augmented parameter sets
\begin{equation}
  \PAll = \Pset \cup \PConj
  \quad\text{and}\quad
  \PSOAll = \PSO \cup \PSOConj.
\end{equation}
Finally, for the data set $\M = \{(\mu_{i}, g_{i}, \eta_{i})\}_{i = 1}^{M}$, we
also define the conjugate and augmented data sets $\MConj$ and $\MAll$ via
\begin{equation}
  \MConj = \{ (\overline{\mu}_{i}, \overline{g}_{i}, \eta_{i})\}_{i = 1}^{M}
  \quad\text{and}\quad
  \MAll = \M \cup \MConj.
\end{equation}


\subsection{Realification of the unstructured AAA}%
\label{sec:RealUSAAA}

We assume to be at iteration $k$ of \AAA.
Then, the set
\begin{equation}
  \Pset = \{(\lambda_{1}, h_{1}), \ldots, (\lambda_{k}, h_{k})\},
  \quad\text{with each}\quad \imag(\lambda_{j}) > 0,
\end{equation}
was constructed to approximate the data in $\M$.
To recover a system~\cref{eqn:UnstructuredSystem} with real state-space
matrices that interpolates at the chosen points
$\lambda_{1}, \ldots, \lambda_{k}$, we have to use the barycentric parameters
\begin{equation}
  \PAll = \{ (\lambda_{1}, h_{1}), \ldots, (\lambda_{k}, h_{k}),
    (\overline{\lambda}_{1}, \overline h_{1}), \ldots,
    (\overline{\lambda}_{k}, \overline h_{k})\},
\end{equation}
which approximate the data in $\MAll$.
In exact arithmetic, if the Loewner matrix~\cref{eqn:LoewnerMat_USAAA}, and the
function value and weight vectors~\cref{eqn:GkAndEtakDef} are formed with the
data in $\MAll$ and the parameters in $\PAll$, then the vector of weights
recovered via~\eqref{eqn:WeightedLS_USAAA} will be closed under conjugation.
Therefore, we can write
\begin{equation}
  \mw = \begin{bmatrix} w_{1} & \overline{w}_{1} & \ldots & w_{k} &
    \overline{w}_{k} \end{bmatrix}^{\trans}.
\end{equation}
However, due to the finite-precision arithmetic, the recovered vector of
barycentric weights will not be closed under conjugation, leading to dynamical
systems that cannot be represented via a real state-space realization.
To resolve this issue, instead of solving~\cref{eqn:WeightedLS_USAAA} with the
augmented data matrix $\MAll$ and augmented parameter set $\PAll$, we  exploit
the linearity of the optimization problem and solve directly for the real and
imaginary parts of the barycentric weights $w_{j}$.
In particular, define the vector of non-redundant real and imaginary parts of
$\mw$ as
\begin{equation} \label{eqn:RealWVec_def}
  \mwReal \coloneqq \begin{bmatrix} \real(w_{1}) & \imag(w_{1}) & \ldots &
    \real(w_{k}) & \imag(w_{k}) \end{bmatrix}^{\trans} \in \R^{2k}.
\end{equation}
For $(\mu_{i}, g_{i}, \eta_{i}) \in \M$, define the vector of real and
imaginary parts of the function values $\mgReal \in \R^{2(M - k)}$ and the
augmented vector of data weights $\ETAReal \in \R^{2(M-k)}$ as 
\begin{subequations} \label{eqn:RealHVec_def}
\begin{align}
  \mgReal & = \begin{bmatrix} \real(g_{1}) & \imag(g_{1}) & \ldots &
    \real(g_{M - k}) & \imag(g_{M - k}) \end{bmatrix}, \\
  \ETAReal & = \begin{bmatrix} \eta_{1} & \eta_{1} & \eta_{2} & \eta_{2} &
    \ldots & \eta_{M - k} & \eta_{M - k} \end{bmatrix}
\end{align}
\end{subequations}
Finally, for $(\mu_{i}, g_{i}, \eta_{i}) \in \M$ and
$(\lambda_{j}, h_{j}) \in \Pset$, define the coefficients
\begin{equation} \label{eqn:termdef_AAAUnstruct}
  \alpha_{i, j} = \frac{g_{i} - h_{j}}{\mu_{i} - \lambda_{j}}
  \quad\text{and}\quad
  \beta_{i, j} = \frac{g_{i} - \overline{h}_{j}}{\mu_{i} -
    \overline{\lambda}_{j}},
\end{equation}
for $i = 1, \ldots, M - k$ and $j = 1, \ldots, k$.
Then, the real matrix $\LowUSReal \in \R^{2M\times 2k}$ defined entrywise
by
\begin{subequations} \label{eq:L_AAAUnstruct_def}
\begin{align}
  \left[ \LowUSReal \right]_{2i - 1, 2j - 1} & =
    -\real(\alpha_{i,j} + \beta_{i,j}), &
  \left[ \LowUSReal \right]_{2i - 1, 2j} & =
    \imag(\alpha_{i, j} - \beta_{i, j}), \\
  \left[ \LowUSReal \right]_{2i, 2j - 1} & =
    -\imag(\alpha_{i, j} + \beta_{i, j}), &
  \left[ \LowUSReal \right]_{2i - 1, 2j - 1} & =
    -\real(\alpha_{i, j} - \beta_{i, j}),
\end{align}
\end{subequations}
for $i = 1, \ldots, M - k$ and $j = 1, \ldots, k$, allows us to solve for the
real and imaginary parts of the barycentric weights via
\begin{equation} \label{eqn:WeightedLS_USAAA_real}
  \mwReal = \left( \mdiag(\ETAReal) \LowUSReal \right)^{\dagger}
    \left( \mdiag(\ETAReal) \mgReal \right).
\end{equation}
To recover a corresponding state-space realization with real matrices, let
\begin{equation}
  \widetilde{\mA}_{j} = \begin{bmatrix} \phantom{-}\real(\lambda_{j}) &
    \imag(\lambda_{j}) \\ -\imag(\lambda_{j}) & \real(\lambda_{j})
    \end{bmatrix}, \quad\text{for}\quad j = 1, \ldots, k,
\end{equation}
and define $\widetilde{\mA} \in \R^{2k\times2k}$ to be the block diagonal
matrix of the form
\begin{equation}
  \widetilde{\mA} = \mdiag(\widetilde{\mA}_{1}, \ldots, \widetilde{\mA}_{k}).
\end{equation}
Further, we define the vectors
\begin{subequations}
\begin{align}
  \widetilde{\mc} & = \begin{bmatrix} \real(h_{1}) & \imag(h_{1}) & \ldots &
    \real(h_{k}) & \imag(h_{k}) \end{bmatrix}^{\trans}, \\
  \widetilde{\mb} & = \begin{bmatrix} \real(w_{1}) & -\imag(w_{1}) & \ldots &
    \real(w_{k}) & -\imag(w_{k}) \end{bmatrix}^{\trans} \quad\text{and}\\
  \widetilde{\mz} & = \begin{bmatrix} 2 & 0 & 2 & 0 & \ldots & 2 & 0
    \end{bmatrix}^{\trans}.
\end{align}
\end{subequations}
Then, a real state-space realization for the parameters in $\PAll$ and
$\mwReal$ is given by
\begin{equation}
  \mE = \eye{2k}, \quad
  \mA = \widetilde{\mA} - \widetilde{\mb} \widetilde{\mz}^{\trans}, \quad
  \mb = \sqrt{2}\,\widetilde{\mb}, \quad
  \mc = \sqrt{2}\,\widetilde{\mc}.
\end{equation}
A related approach has been proposed in~\cite{Hoc17} for the preservation of
Hermitian symmetry structure in the classical AAA algorithm~\cite{NakST18}.


\subsection{Realification of second-order AAAs}%
\label{sec:RealMechAAA}

In this section, we discuss the modifications necessary to the second-order AAA
methods to construct models with real state-space realizations.
As in the unstructured case, we assume to be at iteration $k$ so that
\begin{equation}
  \PSO = \{ (\lambda_{1}, h_{1}, \sigma_{1}), \ldots,
    (\lambda_{k}, h_{k}, \sigma_{k})\},
    \quad\text{with each}\quad \imag(\lambda_{j}) > 0,
\end{equation}
were chosen to approximate the data in $\M$.
In the following discussions, we will require matrices with the same structure
of $\LowSO$ and $\LowSONL$
in~\cref{eqn:LoewnerMat_SO,eqn:LoewnerMat_nonLinJac}, but where the
data used to construct them comes from the sets with conjugate data.
To make this dependence explicit, we use the notation
\begin{equation}
  \LowSO = \LowSO(\PSO, \M)
\end{equation}
to show that the matirx $\LowSO$ as defined in~\cref{eqn:LoewnerMat_SO} is
defined from the parameters in $\PSO$ and data in $\M$.
Then, $\LowSO(\PSOAll, \M)$ is defined as in~\cref{eqn:LoewnerMat_SO} but with
the parameters in $\PSOAll$ instead of $\PSO$.
Similar notation is used for $\LowSONL$.


\subsubsection{Realification of the separable second-order AAA}%
\label{sec:real_SOAAA}

First, we consider \AAAFull, the second-order AAA method with separable
nonlinear least-squares objective function.
To recover second-order systems that admit real state-space realizations, we
define the residual vector $\eSO \in \C^{2(M - k)}$ for the parameters in
$\PSO$ and data in $\M$ entrywise to be
\begin{subequations} \label{eqn:separableError_real}
\begin{align}
  \left[ \eSO \right]_{i} & = \eta_{i} \left( g_{i} - \sum\limits_{j = 1}^{k}
    \left( w_{j} \frac{h_{j} - g_{i}}%
    {(\mu_{i} - \lambda_{j}) (\mu_{i} -\sigma_{j})} +
    \overline{w}_{j} \frac{\overline{h}_{j} - g_{i}}%
    {(\mu_{i} - \overline{\lambda}_{j}) (\mu_{i} - \overline{\sigma}_{j})}
    \right) \right), \\
  \left[ \eSO \right]_{i + M - k} & = \eta_{i} \left( \overline{g}_{i} -
    \sum\limits_{j = 1}^{k} \left( w_{j} \frac{h_{j} - \overline{g}_{i}}%
    {(\overline{\mu}_{i} - \lambda_{j}) (\overline{\mu}_{i} - \sigma_{j})} +
    \overline{w}_{j} \frac{\overline{h}_{j} - \overline{g}_{i}}%
    {(\overline{\mu}_{i} - \overline{\lambda}_{j})
    (\overline{\mu}_{i} - \overline{\sigma}_{j})} \right) \right),
\end{align}
\end{subequations}
for $i = 1, \ldots, M - k$.
For each $j = 1,\ldots, k$, the residual~\cref{eqn:separableError_real} is
differentiable with respect to $\sigma_{j}$ when every $\overline{\sigma}_{j}$
is regarded as a constant, and is differentiable with respect to
$\overline{\sigma}_{j}$ when every $\sigma_{j}$ is regarded as constant.
Thus, the Wirtinger calculus may be used to
minimize the residual vector~\cref{eqn:separableError_real}.
Recall that $\LowSO(\PSO, \M)$ is the Loewner-like
matrix~\cref{eqn:Jacobian_SO} where the dependence on the parameters $\PSO$ and
$\M$ is explicit.
Then, the Jacobian of the residual vector~\cref{eqn:separableError_real} with
respect to $\msigma$ is given by
\begin{equation} \label{eqn:JacobianRealVarPro_notConj}
  \left[ \JacSONoConj \right]_{i, j} = \left[ \LowSO
    \left( \PSO, \MAll \right) \right]_{i, j}
    \frac{\eta_{i}}{\mu_{i} - \sigma_{j}},
\end{equation}
and the Jacobian of the residual vector with respect to $\msigmaConj$ is
\begin{equation} \label{eqn:JacobianRealVarPro_Conj}
    \left[ \JacSOConj \right]_{i, j} = \left[
      \LowSO(\PSOConj, \MAll) \right]_{i, j}
      \frac{\eta_{i}}{\mu_{i} - \overline{\sigma}_{j}},
\end{equation}
for $i = 1, \ldots, 2(M - k)$ and $j = 1, \ldots, k$.

As in \Cref{sec:SOAAA}, \VP{} only explicitly optimizes the nonlinear
parameters $\msigma$ and $\msigmaConj$ so that the barycentric weights
$\mw$ must be recovered after the optimization step.
As done in \Cref{sec:RealUSAAA}, we aim to solve for~\cref{eqn:RealWVec_def},
the real and imaginary parts of the barycentric weights.
Recall the definitions of $\mgReal$ and $\ETAReal$ in~\cref{eqn:RealHVec_def}.
For the data $(\mu_{i}, g_{i}, \eta_{i}) \in \M$ and the barycentric parameters
$(\lambda_{j}, h_{j}, \sigma_{j}) \in \PSO$, define
\begin{equation} \label{eqn:termdef_SOAAA}
  \alpha_{i, j} = \frac{g_{i} - h_{j}}{(\mu_{i} - \lambda_{j})
    (\mu_{i} - \sigma_{j})}
  \quad\text{and}\quad
  \beta_{i, j} = \frac{g_{i} - \overline{h}_{j}}%
    {(\mu_{i} - \overline{\lambda}_{j})(\mu_{i} - \overline{\sigma}_{j})},
\end{equation}
for $i = 1, \ldots, M - k$ and $j = 1, \ldots, k$.
Then, the real matrix $\LowSOReal \in \R^{2M \times 2k}$ defined by 
\begin{subequations} \label{eqn:L_AAALin_def}
\begin{align}
  \left[ \LowSOReal \right]_{2i - 1,2j - 1} & =
    -\real(\alpha_{i, j} + \beta_{i, j}), &
  \left[ \LowSOReal \right]_{2i - 1, 2j} & =
    \imag(\alpha_{i, j} - \beta_{i, j}), \\
  \left[ \LowSOReal \right]_{2i, 2j - 1} & =
    -\imag(\alpha_{i, j} + \beta_{i, j}), &
  \left[ \LowSOReal \right]_{2i - 1, 2j - 1} & =
    -\real(\alpha_{i,j} - \beta_{i,j}),
\end{align}
\end{subequations}
for $i = 1, \ldots, M - k$ and $j = 1, \ldots, k$, allows us to solve for the
real and imaginary parts of the barycentric weights via
\begin{equation} \label{eqn:SolveForW_AAALIN_real}
  \mwReal = \left( \mdiag\left( \ETAReal \right) \LowSOReal
    \right)^{\dagger} \left( \mdiag\left( \ETAReal \right) \mgReal\right).
\end{equation}

To recover a real second-order state-space realization, we define the
matrices
\begin{equation}
  \widetilde{\mD}_{j} = \begin{bmatrix} -\real(\lambda_{j} + \sigma_{j}) &
    -\imag(\lambda_{j} + \sigma_{j}) \\ \phantom{-}\imag(\lambda_{j} + \sigma_{j}) &
    -\real(\lambda_{j} + \sigma_{j}) \end{bmatrix},
  \quad\text{for}\quad j = 1, \ldots, k,
\end{equation}
and
\begin{equation}
  \widetilde{\mK}_{j} = \begin{bmatrix} \phantom{-}\real(\lambda_{j} \sigma_{j}) &
    \imag(\lambda_{j} \sigma_{j}) \\ -\imag(\lambda_{j} \sigma_{j}) &
    \real(\lambda_{j} \sigma_{j}) \end{bmatrix},
    \quad\text{for}\quad j = 1, \ldots, k.
\end{equation}
Next, we define $\widetilde{\mD} \in \R^{2k\times2k}$ and
$\widetilde{\mK} \in \R^{2k\times2k}$ to be the block diagonal matrices
of the form
\begin{equation}
  \widetilde{\mD} = \mdiag(\widetilde{\mD}_{1}, \ldots, \widetilde{\mD}_{k})
  \quad\text{and}\quad
  \widetilde{\mK} = \mdiag(\widetilde{\mK}_{1}, \ldots, \widetilde{\mK}_{k}).
\end{equation}
Finally, define the vectors
\begin{subequations}
\begin{align}
  \widetilde{\mc} & = \begin{bmatrix} \real(h_{1}) & \imag(h_{1}) & \ldots &
    \real(h_{k}) & \imag(h_{k}) \end{bmatrix}^{\trans},\\
  \widetilde{\mb} & = \begin{bmatrix} \real(w_{1}) & -\imag(w_{1}) & \ldots &
    \real(w_{k}) & -\imag(w_{k}) \end{bmatrix}^{\trans} \quad\text{and} \\
  \widetilde{\mz} & = \begin{bmatrix} 2 & 0 & 2 & 0 & \ldots & 2 & 0
    \end{bmatrix}^{\trans}.
\end{align}
\end{subequations}
Then, a real state-space realization for the parameters in $\PAll$ and
$\mwReal$ is given by
\begin{equation} \label{eqn:realRealization_SO}
  \mM = \eye{2k}, \quad
  \mD = \widetilde{\mD}, \quad
  \mK = \widetilde{\mK} + \widetilde{\mb}\widetilde{\mz}^{\trans}, \quad
  \mb = \sqrt{2}\,\widetilde{\mb}, \quad
  \mc = \sqrt{2}\,\widetilde{\mc}.
\end{equation}


\paragraph{Realification of the linearized second-order AAA}
As \AAALin{} does not optimize the quasi-support points $\sigma_{j}$ beyond
their initialization and only solves a linear least-squares problem for the
barycentric weights $w_{j}$, the only modification to \AAALin{} to recover
systems with real state-space realizations is to solve for the real and
imaginary parts of the barycentric weights
via~\cref{eqn:SolveForW_AAALIN_real}.
Then, the real state-space matrices are recovered
using~\cref{eqn:realRealization_SO}.


\subsubsection{Realification of the fully nonlinear second-order AAA}%
\label{sec:real_SONLAAA}

Similar to constructing real models with \AAAFull, we need to only supply
gradients of the fully nonlinear residual vector~\cref{eqn:nonlinError} when
constructed with the parameters in $\PSOAll$ and the augmented weight
barycentric vector $\begin{bmatrix} \mw & \overline{\mw} \end{bmatrix}$.
The $i$-th entry of this residual vector is given as
\begin{equation} \label{eqn:NLError_real}
  \left[ \eTrueSO \right]_{i} = \eta_{i} \left\lvert
    \frac{\sum_{j = 1}^{k} \left( \frac{h_{j} w_{j}}{(\mu_{i} - \lambda_{j})
    (\mu_{i} - \sigma_{j})} + \frac{\overline{h}_{j} \overline{w}_{j}}%
    {(\mu_{i} -\overline{\lambda}_{j}) (\mu_{i} - \overline{\sigma}_{j})}
    \right)}{1 + \sum_{j = 1}^{k} \left( \frac{w_{j}}{(\mu_{i} - \lambda_{j})
    (\mu_{i} - \sigma_{j})} + \frac{\overline{w}_{j}}%
    {(\mu_{i} - \overline{\lambda}_{j}) (\mu_{i} - \overline{\sigma}_{j})}
    \right)} - g_{i} \right\rvert.
\end{equation}
Recall that $\LowSONL(\PSO, \M)$ is the Loewner-like
matrix~\cref{eqn:LoewnerMat_nonLinJac} where the dependence on $\PSO$ and $\M$
is explicit.
Then, the Jacobian of the residual vector~\cref{eqn:NLError_real} with respect
to $\mw$ is given by
\begin{equation} \label{eqn:JacobianRealSONLwrtW}
  \JacRealSONLkW = \mdiag(\etaDenomWeighted) \LowSONL(\PSO,\M)
    \in \C^{2(M-k) \times k},
\end{equation}
and the Jacobian of the residual vector~\cref{eqn:NLError_real} with respect to
$\msigma$ is defined entrywise as
\begin{equation} \label{eqn:JacobianRealSONLwrtSig}
  \left[ \JacRealSONLkSig \right]_{i, j} = \frac{w_{j} \left[ \etaDenomWeighted
    \right]_{i}}{\mu_{i} - \sigma_{j}} \left[ \LowSONL(\PSO, \M) \right]_{i, j},
\end{equation}
for $i = 1, \ldots 2(M - k)$ and $j = 1, \ldots, k$.
Then, as in \Cref{sec:SONLAAA}, the Jacobian with respect to the complete
non-conjugated parameter vector is
\begin{equation} \label{eqn:JacobianRealSONLUnconj}
  \JacRealSONLkFull = \begin{bmatrix} \JacRealSONLkW \\
    \JacRealSONLkSig \end{bmatrix}.
\end{equation}
Similarly, the Jacobians of the fully nonlinear
residual~\cref{eqn:NLError_real} with respect to $\mwConj$ and $\msigmaConj$
are given by
\begin{equation} \label{eqn:JacobianRealSONLwrtWConj}
  \JacRealSONLkWConj = \mdiag(\etaDenomWeighted)
    \left[ \LowSONL(\PSOConj,\M) \right] \in \C^{2(M-k) \times k}
\end{equation}
and
\begin{equation} \label{eqn:JacobianRealSONLwrtSigConj}
  \left[ \JacRealSONLkSigConj \right]_{i, j} =
    \frac{\overline{w}_{j} \left[ \etaDenomWeighted \right]_{i}}%
    {\mu_{i} - \overline{\sigma}_{j}} \left[ \LowSONL(\PSOConj, \M)
    \right]_{i, j},
\end{equation}
for $i = 1, \ldots 2(M - k)$ and $j = 1, \ldots, k$, respectively.
Finally, the Jacobian with respect to the full conjugated parameter vector is
\begin{equation} \label{eqn:JacobianRealSONLConj}
  \JacRealSONLkFullConj = \begin{bmatrix} \JacRealSONLkWConj \\
    \JacRealSONLkSigConj \end{bmatrix}.
\end{equation}

The two Jacobians~\cref{eqn:JacobianRealSONLUnconj,eqn:JacobianRealSONLConj}
allow us to minimize the residual~\cref{eqn:NLError_real} with respect to the
parameters $\msigma$ and $\mw$ using the Wirtinger calculus.
After optimization, the real state-space matrices may be recovered
via~\cref{eqn:realRealization_SO}.


\section{Numerical experiments}%
\label{sec:numerics}

In this section, we demonstrate the effectiveness of our second-order AAA
algorithms developed in \Cref{sec:SecondOrderAAAs} in three numerical examples.
The experiments reported here were performed on a 2023 MacBook Pro equipped
with 16\,GB RAM and an Apple M2 Pro chip.
Computations were done in MATLAB 24.1.0.2578822 (R2024a) running on
macOS Sequoia 15.4.
For numerical optimization involving the Wirtinger calculus, we use the
TensorLab toolbox provided by KU Leuven~\cite{VerDSetal16}.
A description of the methods can be found in~\cite{SorVD12}.
The source code, data and results of the numerical experiments are open
source/open access and available at~\cite{supAckW25}.


\subsection{Experimental setup}%
\label{sec:setup}

We compare the performance of our second-order AAA algorithms against the
standard unstructured AAA algorithm (\AAA).  
Motivated by the discussion in \Cref{sec:AccuracyHeuristics}, we also compare
the proposed methods to \AAA{} with twice the reduced order, i.e., we run
\Cref{alg:UnstructAAA} to order $2\maxOrder$ instead of $\maxOrder$.
We will refer to the models computed in this manor by \AAAtwo.
Note that \AAAtwo{} is exactly the same as \AAA{} except that at iteration $k$,
\AAAtwo{} constructs a degree-$2k$ rational function instead of degree $k$.

For the comparison of the constructed data-driven models in terms of
approximation accuracy, we define the following error measures.
Given an order-$k$ barycentric model $\Hr$ of either the
form~\cref{eqn:UnstructuredBaryForm} or~\cref{eqn:MechBaryForm} approximating
the data of an unknown function $H$ given in $\M$, we define the weighted
error vector $\ekResults$ entrywise as
\begin{equation} \label{eqn:error_weighted_results}
  \left[ \ekResults \right]_{i} =
    \eta_{i} \lvert \Hr(\mu_{i}) - H(\mu_{i}) \rvert =
    \eta_{i} \lvert \Hr(\mu_{i}) - g_{i} \rvert,
\end{equation}
with $(\mu_{i}, g_{i}, \eta_{i}) \in \M$, for $i = 1, \ldots, M$.
Then, with the weighted data vector
\begin{equation}
  \gWeighted = \begin{bmatrix} \eta_{1} \lvert g_{1} \rvert &
    \eta_{2} \lvert g_{2} \rvert & \ldots &
    \eta_{M} \lvert g_{M} \rvert \end{bmatrix}^{\trans},
\end{equation}
we define the weighted relative $\Ltwo$ approximation error as
\begin{equation} \label{eqn:ltwoerror}
  \LtwoWeighted = \frac{\lVert \ekResults \rVert_{2}}%
    {\lVert \gWeighted \rVert_{2}},
\end{equation}
the weighted relative $\Linf$ approximation error as
\begin{equation} \label{eqn:linferror}
  \LinfWeighted = \frac{\lVert \ekResults \rVert_{\infty}}%
    {\lVert \gWeighted \rVert_{\infty}},
\end{equation}
and the weighted maximum pointwise relative approximation error as
\begin{equation} \label{eqn:ptwerror}
  \relerr = \max\limits_{i = 1, \ldots, M} \left(
    \frac{\left[ \ekResults \right]_{i}}{\left[ \gWeighted \right]_{i}} \right).
\end{equation}

To effectively evaluate the performance of the data-driven models over varying
orders, we will make use of the MORscore~\cite{Him21, Wer21}.
In short, the MORscore is a measure that compresses the information in a
relative error-per-order plot into a single, easily comparable number.
Given a relative error plot $(k, \epsilon(k)) \in \N \times [0, 1]$, where $k$
is the order of the model and $\epsilon(k)$ the relative error in a
user-defined measure, and a minimum attainable error $\epsilon_{\min}$, the
MORscore is defined as the area below the normalized relative error plot
$(\phi_{k}, \phi_{\epsilon(k)})$, where
\begin{equation}
  \phi_{k}\colon k \to \frac{k}{\maxOrder} \quad\text{and}\quad
  \phi_{\epsilon(k)}\colon \epsilon(k) \to \frac{\log_{10}(\epsilon(k))}%
    {\lfloor \log_{10}(\epsilon_{\text{min}}) \rfloor}.
\end{equation}
Thus, for a given error measure, the MORscore assigns each data-driven modeling
method a number in the interval $[0, 1]$, where a number near $0$ indicates
large relative errors for all computed orders while a MORscore near $1$
indicates that the method was able to fit the data well for increasing orders.

Besides these error measures, we also report the convergence of the methods
in terms of their optimization objectives.
We display the value of $\lVert \arbErrVec \rVert_{2}^{2}$ at each
iteration $k$ for each method, where $\arbErrVec$ is the residual
vector associated to the method in question.
Specifically, we have that $\arbErrVec = \ek$ for \AAA{},
$\arbErrVec = \ekSO$ for \AAAFull{} and \AAALin{}, and
$\arbErrVec = \ekTrue$ for \AAAFullNL{}.


\subsection{Viscoelastic sandwich beam}%
\label{sec:results_sandwich}

The first numerical example we consider is a beam consisting of a viscoelastic
ethylene-propylene-diene core sandwiched between two layers of cold-rolled
steel~\cite{VanMM13, AumW24}.
The beam is clamped on one side and free to move on the other.
The input is a force on the free end, and the output is the displacement
measured at the same point.
The spatial discretization of the beam results in a dynamical system with
transfer function
\begin{equation} \label{eqn:transferFunction_sandwich}
  H(s) = \mc^{\trans}\left(s^{2} \mM + \mK + \frac{G_{0} + G_{\infty}
    (s \tau)^{\alpha}}{1 + (s \tau)^{\alpha}} \mG \right)^{-1} \mb,
\end{equation}
where $\mM,\mK, \mG \in \R^{3360 \times 3360}$ and $\mb, \mc \in \R^{3360}$. 
The static shear modulus is $G_{0} = 350.4$\,kPa,
the asymptotic shear modulus is $G_{\infty} = 3.062$\,MPa,
the relaxation time is $\tau = 8.230$\,ns, and the fractional parameter is
$\alpha = 0.675$.
We note that~\cref{eqn:transferFunction_sandwich} is \emph{not} a rational
function, and in particular, not in the linear second-order
form~\cref{eqn:MechTransfer}.
However, as is typical of mechanical systems, the model does have explicit
dependence on the second time derivative, as seen by the $s^{2} \mM$ term in
the transfer function~\cref{eqn:transferFunction_sandwich}.
Since our models are constructed purely from data, we are free to produce
linear second-order approximations regardless of the (potentially unknown) true
underlying transfer function, potentially preserving some of the underlying 
structure.

The data for this example is generated by sampling $g_{i} = H(\mu_{i})$ at
$\mu_{i} = \omega_{i}\imunit$, where $\omega_{i}$ are $1\,000$
logarithmically spaced points in the interval $[10^{1}, 10^{4}]$\,rad/s.
The frequency response of the generated data can be seen in
\Cref{fig:resp_sandwich}.
The data weights are chosen to be relative weights so that we have
$\eta_{i} = \lvert g_{i} \rvert^{-1}$.
The original transfer function~\cref{eqn:transferFunction_sandwich} is given by
real matrices and exhibits the same behavior with respect to complex conjugation
as the real second-order models we discussed earlier.
Therefore, we restrict our constructed models to also be real; see
\Cref{sec:realification}.

First, we apply \AAAFull{} to the data with a convergence tolerance on the
relative weighted $\Ltwo$ error~\cref{eqn:ltwoerror} of $0.5\%$.
\AAAFull{} is able to attain this accuracy with an order-$14$
second-order model.
To compare the performance of \AAAFull{} to all other methods, we set
$\maxOrder = 14$ and construct an order-$\maxOrder$ approximation to the data
with \AAA{}, \AAALin{} and \AAAFullNL{}.
The model created by \AAAtwo{} for the comparison has the maximum order
$2 \maxOrder = 28$.
We examine the convergence behavior of each method in
\Cref{fig:errAndOptFun_sandwich}.
Since the models are constructed to be real, they can only attain even orders.

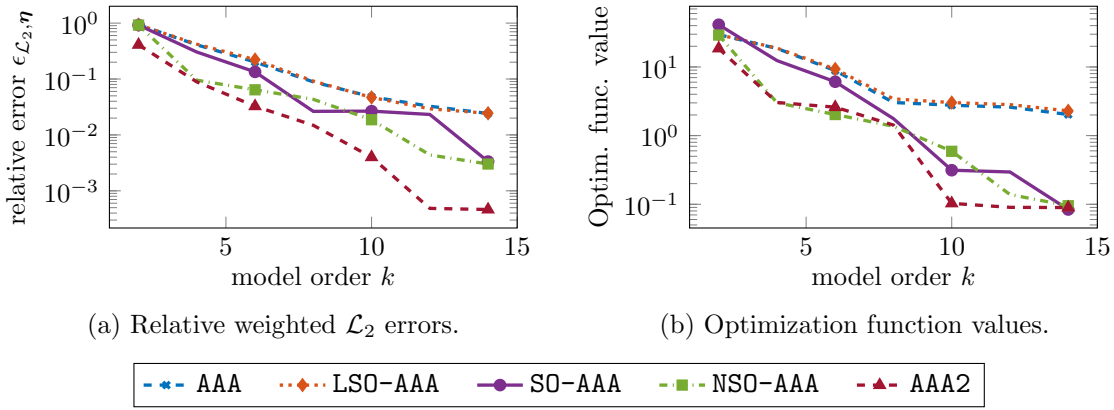
\begin{figure}[t]
  \centering
  \begin{subfigure}[b]{.49\linewidth}
    \centering
  \tikzexternalenable%
  \tikzsetnextfilename{errorPerOrder_sandwich}%
  \begin{tikzpicture}[font = \plotfontsize]
  \pgfplotstableread{graphics/data/sandwich_L2.csv}\tableERR

  \begin{semilogyaxis}[
    scale only axis,
    width              = .73\linewidth,
    height             = .4\linewidth,
    xmin               = 1,
    xmax               = 15,
    xminorticks        = false,
    yminorticks        = true,
    scaled x ticks     = false,
    xlabel             = {model order $k$},
    ylabel             = {relative error $\LtwoWeighted$},
    xlabel style       = {yshift = .3em},
    ylabel style       = {yshift = -.3em},
    x tick label style = {/pgf/number format/1000 sep={\,}},
    y tick label style = {/pgf/number format/1000 sep={\,}}
  ]
    \addplot[AAAConverge] table[x = k, y = AAA]{\tableERR};
    \addplot[LSOAAAConverge] table[x = k, y = SOLAAA]{\tableERR};
    \addplot[SOAAAConverge] table[x = k, y = SOAAA]{\tableERR};
    \addplot[NLSOAAAConverge] table[x = k, y = SONLAAA]{\tableERR};
    \addplot[AAA2kConverge] table[x = k, y = AAA2k]{\tableERR};
  \end{semilogyaxis}
\end{tikzpicture}%
  \tikzexternaldisable%

    \caption{Relative weighted $\Ltwo$ errors.}
    \label{fig:err_sandwich}
  \end{subfigure}%
  \hfill%
  \begin{subfigure}[b]{.49\linewidth}
    \centering
  \tikzexternalenable%
  \tikzsetnextfilename{optFun_sandwich}%
  \begin{tikzpicture}[font = \plotfontsize]
  \pgfplotstableread{graphics/data/sandwich_optFun.csv}\tableFNC

  \begin{semilogyaxis}[
    scale only axis,
    width              = .73\linewidth,
    height             = .4\linewidth,
    xmin               = 1,
    xmax               = 15,
    xminorticks        = false,
    yminorticks        = true,
    scaled x ticks     = false,
    xlabel             = {model order $k$},
    ylabel             = {Optim. func. value},
    xlabel style       = {yshift = .3em},
    ylabel style       = {yshift = -.3em},
    x tick label style = {/pgf/number format/1000 sep={\,}},
    y tick label style = {/pgf/number format/1000 sep={\,}}
  ]

    \addplot[AAAConverge] table[x = k, y = AAA]{\tableFNC};
    \addplot[LSOAAAConverge] table[x = k, y = SOLAAA]{\tableFNC};
    \addplot[SOAAAConverge] table[x = k, y = SOAAA]{\tableFNC};
    \addplot[NLSOAAAConverge] table[x = k, y = SONLAAA]{\tableFNC};
    \addplot[AAA2kConverge] table[x = k, y = AAA2k]{\tableFNC};
  \end{semilogyaxis}
\end{tikzpicture}%
  \tikzexternaldisable%

    \caption{Optimization function values.}
    \label{fig:optFun_sandwich}
  \end{subfigure}
  
  \vspace{.5\baselineskip}
  \tikzexternalenable%
  \tikzsetnextfilename{legend_conv}%
  \begin{tikzpicture}
  \begin{axis}[%
    hide axis,
    scale only axis,
    width  = 1cm,
    height = 1cm,
    xmin   = 0,
    xmax   = 1,
    ymin   = 0,
    ymax   = 1,
    legend columns    = -1,
    legend cell align = {left},
    legend style      = {
      at     = {(0,0)},
      anchor = center,
      /tikz/every even column/.append style = {column sep = 0.4cm}}
  ]
  
    \addlegendimage{AAAConverge} coordinates {(0, 0)};
    \addlegendentry{\AAA}

    \addlegendimage{LSOAAAConverge}
    \addlegendentry{\AAALin}

    \addlegendimage{SOAAAConverge}
    \addlegendentry{\AAAFull}

    \addlegendimage{NLSOAAAConverge}
    \addlegendentry{\AAAFullNL}

    \addlegendimage{AAA2kConverge}
    \addlegendentry{\AAAtwo}
  \end{axis}
\end{tikzpicture}%
  \tikzexternaldisable%

  \caption{Convergence behavior of the different methods for the sandwich beam
    example in terms of the relative weighted $\Ltwo$ approximation errors and
    the optimization function values.
    \AAAFull{} and \AAAFullNL{} show a very similar error behavior, while
    \AAALin{} and \AAA{} are nearly identical.}
  \label{fig:errAndOptFun_sandwich}
\end{figure}

\Cref{fig:err_sandwich} confirms the analysis done in
\Cref{sec:AccuracyHeuristics}.
We observe that both \AAA{} and \AAALin{} have nearly identical errors for each
model order $k$ due to our choice to place the $\sigk$ parameters far in the
left-half plane; we initialize $\real(\sigk_j) = -10^{5}$ for each second-order
method.
Both second-order methods that optimize the quasi-support points $\sigk_{j}$
have smaller approximation errors for each order $k$ than \AAALin{},
with \AAAFullNL{} generally performing better due to minimizing the nonlinear
error as opposed to the separable error that is minimized by \AAAFull{}.
Also, we see that the unstructured \AAAtwo{} approximation, which is a
rational function of the same degree as constructed by the second-order
methods, performs the best at each iteration $k$.
This is the expected behavior since the second-order model structure is not
enforced in the \AAAtwo{} approximations.

In \Cref{fig:optFun_sandwich}, we show the value of the individual optimization
functions for each method throughout the iteration as described in
\Cref{sec:setup}.
Interestingly, we observe that while the separable error $\ekSO$ decreases
significantly from $k = 8$ to $k = 10$, the actual error value $\LtwoWeighted$
stays approximately constant.
This is an artifact resulting from the approximation made to obtain the
separable form of $\ekSO$.
Since $\ekSO$ ignores the term $1 / \denomSOk$ in the error expression,
minimizing $\ekSO$ may not decrease $\LtwoWeighted$.
Predicting and quantifying the effects of the deviation of the separable
(or fully linear) error vectors from the true nonlinear error is an
interesting direction for future research.
For now, we emphasize that while convergence in the relative weighted $\Ltwo$
error, $\LtwoWeighted$, may stagnate for a few iterations or even be
non-monotonic, we observe that \AAA{}, \AAALin{} and \AAAFull{} all have
satisfactorily low $\LtwoWeighted$ errors after the prescribed number of
iterations.

For further comparisons of the performance of the second-order AAA methods, the
MORscores of the second-order methods in each of the error measures
$\LtwoWeighted$, $\LinfWeighted$ and $\relerr$ are provided in
\Cref{tab:MORscores_sandwich}.
The MORscores show that the trend showcased in \Cref{fig:err_sandwich} for the
$\LtwoWeighted$ error continues across each error measure.
In particular, both second-order methods that optimize the quasi-support points
(\AAAFull{} and \AAAFullNL{}) perform significantly better in each error metric
than the fully linearized \AAALin{}.
Also, the fully nonlinear \AAAFullNL{} outperforms \AAAFull{}.
The difference in MORscores between \AAAFullNL{} and \AAAFull{} is due to the
misalignment between the optimization function $\ekSO$ and the true error
metrics.
This can be seen in \Cref{fig:errAndOptFun_sandwich} as the \AAAFull{}
approximations do not decrease the $\LtwoWeighted$ error measure for the
orders $k = 6$ to $k = 10$.

\begin{table}[t]
    \centering
    \caption{MORscores of the second-order methods for the sandwich beam example
      with minimum attainable tolerance $\epsilon_{\min} = 10^{-8}$.
      Due to the chosen weights $\eta_{i}$, the $\Linf$ and pointwise error
      measures are identical $\LinfWeighted \equiv \relerr$.
      We see that \AAAFullNL{} is the best performing method followed by
      \AAAFull{} and then \AAALin{}.}
    \label{tab:MORscores_sandwich}

    \vspace{.5\baselineskip}
    \begin{tabular}{lrrr}
    \hline\noalign{\smallskip}
    \multicolumn{1}{c}{\textbf{Algorithm}} &
      \multicolumn{1}{c}{$\LtwoWeighted$} &
      \multicolumn{1}{c}{$\LinfWeighted$} &
      \multicolumn{1}{c}{$\relerr$} \\
    \noalign{\smallskip}\hline\noalign{\smallskip}
    \AAALin & 0.103 & 0.062 & 0.062 \\
    \AAAFull & 0.133 & 0.081 & 0.081 \\
    \AAAFullNL & 0.160 & 0.120 & 0.120 \\
    \noalign{\smallskip}\hline\noalign{\smallskip}
    \end{tabular}
\end{table}

Finally, we examine the frequency response of each converged model and their
associated pointwise weighted errors, $\ekResults$, in
\Cref{fig:respAndErrVsFreq_sandwich}.
The frequency response of each model is indistinguishable from the data, as
desired.
From the pointwise weighted errors displayed in \Cref{fig:ptwiseErr_sandwich},
we observe that the error of the \AAAFullNL{} model is fairly constant across
the whole frequency range.
This consistent error is likely a result of \AAAFullNL{} minimizing the true
nonlinear approximation error.
The errors of the other methods vary significantly more with the frequency
likely due to the approximate least-squares formulations that are used in
addition to the interpolation.

\begin{figure}[t]
  \centering
  \begin{subfigure}[b]{.49\linewidth}
    \centering
  \tikzexternalenable%
  \tikzsetnextfilename{resp_sandwich}%
  \begin{tikzpicture}[font = \plotfontsize]
  \pgfplotstableread{graphics/data/sandwich_response.csv}\tableRESP

  \begin{loglogaxis}[
    scale only axis,
    width              = .7\linewidth,
    height             = .4\linewidth,
    xmin               = 1e+1,
    xmax               = 1e+4,
    ymin               = 1e-3,
    ymax               = 1e-1,
    xminorticks        = true,
    yminorticks        = true,
    scaled x ticks     = false,
    clip mode          = individual,
    xlabel             = {frequency $\omega$ (rad/s)},
    ylabel             = {magnitude $\lvert H(\omega \imunit) \rvert$},
    xlabel style       = {yshift = .3em},
    ylabel style       = {yshift = -.3em},
    x tick label style = {/pgf/number format/1000 sep={\,}},
    y tick label style = {/pgf/number format/1000 sep={\,}}
  ]
    \addplot[trueData] table[x = mu, y = g]{\tableRESP};
    \addplot[AAAResponse] table[x = mu, y = AAA]{\tableRESP};
    \addplot[LSOAAAResponse] table[x = mu, y = SOLAAA]{\tableRESP};
    \addplot[SOAAAResponse] table[x = mu, y = SOAAA]{\tableRESP};
    \addplot[NLSOAAAResponse] table[x = mu, y = SONLAAA]{\tableRESP};
    \addplot[AAA2kResponse] table[x = mu, y = AAA2k]{\tableRESP};
  \end{loglogaxis}
\end{tikzpicture}%
  \tikzexternaldisable%

    \caption{Frequency response.}
    \label{fig:resp_sandwich}
  \end{subfigure}%
  \hfill%
  \begin{subfigure}[b]{.49\linewidth}
    \centering
  \tikzexternalenable%
  \tikzsetnextfilename{errPerFreq_sandwich}%
  \begin{tikzpicture}[font = \plotfontsize]
  \pgfplotstableread{graphics/data/sandwich_error.csv}\tableERR

  \begin{loglogaxis}[
    scale only axis,
    width              = .7\linewidth,
    height             = .4\linewidth,
    xmin               = 1e+1,
    xmax               = 1e+4,
    ymin               = 3e-7,
    ymax               = 2e-1,
    xminorticks        = true,
    yminorticks        = true,
    scaled x ticks     = false,
    clip mode          = individual,
    xlabel             = {frequency $\omega$ (rad/s)},
    ylabel             = {pointwise error $\ekResults$},
    xlabel style       = {yshift = .3em},
    ylabel style       = {yshift = -.3em},
    x tick label style = {/pgf/number format/1000 sep={\,}},
    y tick label style = {/pgf/number format/1000 sep={\,}}
  ]
    \addplot[AAAResponse] table[x = mu, y = AAA]{\tableERR};
    \addplot[LSOAAAResponse] table[x = mu, y = SOLAAA]{\tableERR};
    \addplot[SOAAAResponse] table[x = mu, y = SOAAA]{\tableERR};
    \addplot[NLSOAAAResponse] table[x = mu, y = SONLAAA]{\tableERR};
    \addplot[AAA2kResponse] table[x = mu, y = AAA2k]{\tableERR};
  \end{loglogaxis}
\end{tikzpicture}%
  \tikzexternaldisable%

    \caption{Pointwise weighted error.}
    \label{fig:ptwiseErr_sandwich}
  \end{subfigure}

  \vspace{.5\baselineskip}
  \tikzexternalenable%
  \tikzsetnextfilename{legend_freq}%
  \begin{tikzpicture}
  \begin{axis}[%
    hide axis,
    scale only axis,
    width  = 1cm,
    height = 1cm,
    xmin   = 0,
    xmax   = 1,
    ymin   = 0,
    ymax   = 1,
    legend columns    = -1,
    legend cell align = {left},
    legend style      = {
      at     = {(0,0)},
      anchor = center,
      /tikz/every even column/.append style = {column sep = 0.4cm}}
  ]

    \addplot[trueData] coordinates {(0, 0)};
    \addlegendentry{Original data}

    \addlegendimage{AAAResponse}
    \addlegendentry{\AAA}

    \addlegendimage{LSOAAAResponse}
    \addlegendentry{\AAALin}

    \addlegendimage{SOAAAResponse}
    \addlegendentry{\AAAFull}

    \addlegendimage{NLSOAAAResponse}
    \addlegendentry{\AAAFullNL}

    \addlegendimage{AAA2kResponse}
    \addlegendentry{\AAAtwo}
  \end{axis}
\end{tikzpicture}%
  \tikzexternaldisable%

  \caption{Frequency response and pointwise weighted errors for the
    sandwich beam example with $\maxOrder = 14$ models.
    All methods provide reasonably accurate approximations and can reproduce
    the given data.}
  \label{fig:respAndErrVsFreq_sandwich}
\end{figure}
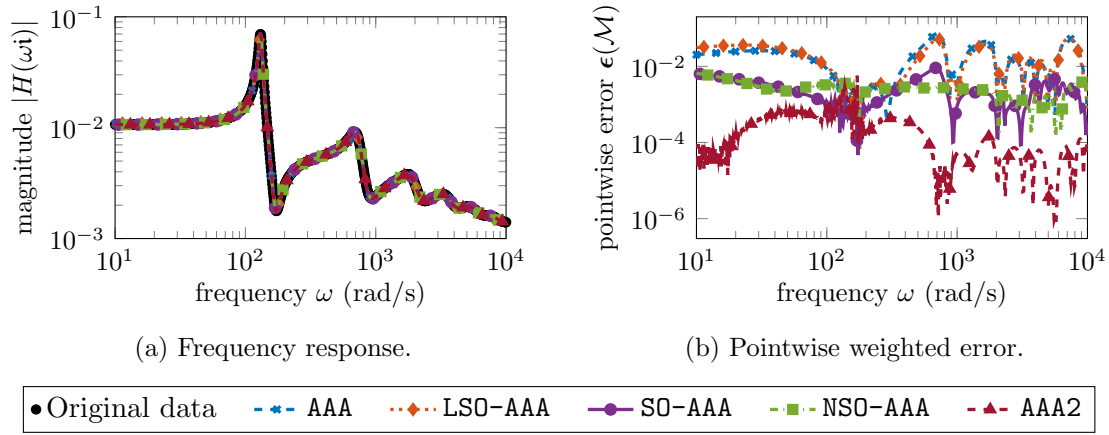


\subsection{Butterfly gyroscope}%
\label{sec:results_gyro}

The next example describes the vibrational displacement of a butterfly
gyroscope~\cite{Bil05, morwiki_gyro}.
The butterfly gyroscope is an inertial navigation device with four inertial
sensors, each of which detects displacements in the three spatial directions,
leading to a total of $12$ outputs.
To adapt the model to our single-input/single-output setting,
we take the average of the displacements into the y-direction.
The resulting model has the second-order form~\cref{eqn:MechSystem}
with transfer function~\cref{eqn:MechTransfer}.
The model is real so that for this example we constrain our data-driven models
to have real state-space representations; see \Cref{sec:realification}.

As data for this example, we begin with frequency response samples
$g_{i} = H(\mu_{i})$ at $\mu_{i} = \imunit\omega_{i}$, where $\omega_{i}$
are $500$ logarithmically distributed points in $[10^{3}, 10^{6}]$\,rad/s.
Near the end of this interval, the magnitude of the model's frequency response
decays rapidly.
For numerical robustness, we restrict the data to frequency response samples
with magnitude larger than $10^{-6}$, resulting in overall $408$ data points.
The weights $\eta_{i}$ are taken to be the reciprocal of the transfer function
magnitude, that is, we have $\eta_{i} = \lvert g_{i} \rvert^{-1}$ modeling
the relative weighting of the given data.

We specify a convergence tolerance on the $\LtwoWeighted$ error of the
\AAAFull{} model of $10^{-6}$, which results in an order-$22$ second-order
model.
To compare \AAAFull's convergence to the other AAA methods, we set
$\maxOrder = 22$ and compute order-$\maxOrder$ approximations via
\AAA{}, \AAALin{}, \AAAFullNL{} and \AAAtwo{}.
The convergence of each method is illustrated in \Cref{fig:errAndOptFun_gyro}.
Note that as for the previous example, only even orders can be shown shown
since the constructed models are enforced to be real.

\begin{figure}[t]
  \centering
  \begin{subfigure}[b]{.49\linewidth}
    \centering
  \tikzexternalenable%
  \tikzsetnextfilename{errorPerOrder_gyro}%
  \begin{tikzpicture}[font = \plotfontsize]
  \pgfplotstableread{graphics/data/gyroY_L2.csv}\tableERR

  \begin{semilogyaxis}[
    scale only axis,
    width              = .73\linewidth,
    height             = .4\linewidth,
    xmin               = 1,
    xmax               = 23,
    xminorticks        = false,
    yminorticks        = true,
    scaled x ticks     = false,
    xlabel             = {model order $k$},
    ylabel             = {relative error $\LtwoWeighted$},
    xlabel style       = {yshift = .3em},
    ylabel style       = {yshift = -.3em},
    x tick label style = {/pgf/number format/1000 sep={\,}},
    y tick label style = {/pgf/number format/1000 sep={\,}}
  ]
    \addplot[AAAConverge] table[x = k, y = AAA]{\tableERR};
    \addplot[LSOAAAConverge] table[x = k, y = SOLAAA]{\tableERR};
    \addplot[SOAAAConverge] table[x = k, y = SOAAA]{\tableERR};
    \addplot[NLSOAAAConverge] table[x = k, y = SONLAAA]{\tableERR};
    \addplot[AAA2kConverge] table[x = k, y = AAA2k]{\tableERR};
  \end{semilogyaxis}
\end{tikzpicture}%
  \tikzexternaldisable%

    \caption{Relative weighted $\Ltwo$ errors.}
    \label{fig:err_gyro}
  \end{subfigure}%
  \hfill%
  \begin{subfigure}[b]{.49\linewidth}
    \centering
  \tikzexternalenable%
  \tikzsetnextfilename{optFun_gyro}%
  \begin{tikzpicture}
  \pgfplotstableread{graphics/data/gyroY_optFun.csv}\tableFNC

  \begin{semilogyaxis}[
    scale only axis,
    width              = .73\linewidth,
    height             = .4\linewidth,
    xmin               = 1,
    xmax               = 23,
    xminorticks        = false,
    yminorticks        = true,
    scaled x ticks     = false,
    xlabel             = {model order $k$},
    ylabel             = {Optim. func. value},
    xlabel style       = {yshift = .3em},
    ylabel style       = {yshift = -.3em},
    x tick label style = {/pgf/number format/1000 sep={\,}},
    y tick label style = {/pgf/number format/1000 sep={\,}}
  ]

    \addplot[AAAConverge] table[x = k, y = AAA]{\tableFNC};
    \addplot[LSOAAAConverge] table[x = k, y = SOLAAA]{\tableFNC};
    \addplot[SOAAAConverge] table[x = k, y = SOAAA]{\tableFNC};
    \addplot[NLSOAAAConverge] table[x = k, y = SONLAAA]{\tableFNC};
    \addplot[AAA2kConverge] table[x = k, y = AAA2k]{\tableFNC};
  \end{semilogyaxis}
\end{tikzpicture}%
  \tikzexternaldisable%

    \caption{Optimization function values.}
    \label{fig:optFun_gyro}
  \end{subfigure}

  \vspace{.5\baselineskip}
  \tikzexternalenable%
  \tikzsetnextfilename{legend_conv}%
  \begin{tikzpicture}
  \begin{axis}[%
    hide axis,
    scale only axis,
    width  = 1cm,
    height = 1cm,
    xmin   = 0,
    xmax   = 1,
    ymin   = 0,
    ymax   = 1,
    legend columns    = -1,
    legend cell align = {left},
    legend style      = {
      at     = {(0,0)},
      anchor = center,
      /tikz/every even column/.append style = {column sep = 0.4cm}}
  ]
  
    \addlegendimage{AAAConverge} coordinates {(0, 0)};
    \addlegendentry{\AAA}

    \addlegendimage{LSOAAAConverge}
    \addlegendentry{\AAALin}

    \addlegendimage{SOAAAConverge}
    \addlegendentry{\AAAFull}

    \addlegendimage{NLSOAAAConverge}
    \addlegendentry{\AAAFullNL}

    \addlegendimage{AAA2kConverge}
    \addlegendentry{\AAAtwo}
  \end{axis}
\end{tikzpicture}%
  \tikzexternaldisable%

  \caption{Convergence behavior of the different methods for the gyroscope
    example in terms of the relative weighted $\Ltwo$ approximation errors and
    the optimization function values.
    \AAAFull{} and \AAAFullNL{} show a very similar error behavior, while
    \AAALin{} and \AAA{} are nearly identical.}
  \label{fig:errAndOptFun_gyro}
\end{figure}
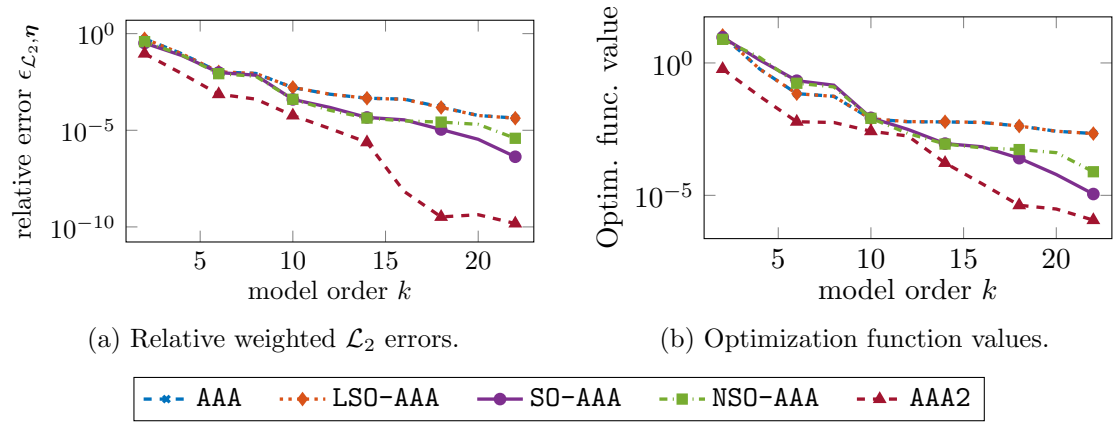

In \Cref{fig:err_gyro}, we see that both linear methods, \AAA{} and \AAALin{},
are indistinguishable in terms of performance, while the second-order AAA
methods that optimize the quasi-support points produce visibly better
approximations.
As for the previous example, \AAAtwo{} is the best performing algorithm.
In contrast to the results in \Cref{sec:results_sandwich}, we see that
\AAAFull{} performs equally well or even better compared to \AAAFullNL{} 
for each model order $k$.
This improved performance is also reflected for each error measure in the
MORscores shown in \Cref{tab:MORscores_gyroscope}.
In \Cref{fig:optFun_gyro}, we see that a reduction the optimization function
value correlates with a reduction in the $\LtwoWeighted$ error in this example.
This observation is also in contrast to the results in
\Cref{sec:results_sandwich}, where a reduction in the optimization function
value was not correlated with a reduction in the $\LtwoWeighted$ error.
These two observations indicate that in this example, the separable and
fully linear residual vectors $\ekSO$ are a high-fidelity proxy for the true,
fully nonlinear residual $\ekTrueSO$.
A different possibility is that the nonlinear optimization process in
\AAAFullNL{} struggles to find suitable approximation so that the optimization
in \AAAFull{} involving a smaller set of parameters performs better.
Due to the severe nonlinearities of the fully nonlinear error, we expect more
local minima, which may have errors much larger than the global minimum. 
Additionally, due to the wide frequency range considered for this problem and
the large deviations in transfer function magnitudes, the numerical computation
of the associated gradients may be inaccurate, resulting in insufficient
directions for the optimization.

\begin{table}[t]
    \centering
    \caption{MORscores of the second-order methods for the gyroscope example
      with minimum attainable tolerance $\epsilon_{\min} = 10^{-8}$.
      Due to the chosen weights $\eta_{i}$ the $\Linf$ and pointwise error
      measures are identical $\LinfWeighted \equiv \relerr$.
      In this example, \AAAFull{} is the best performing method followed by
      \AAAFullNL{} and then \AAALin{}.}
    \label{tab:MORscores_gyroscope}

    \vspace{.5\baselineskip}
    \begin{tabular}{lrrr}
    \hline\noalign{\smallskip}
    \multicolumn{1}{c}{\textbf{Algorithm}} &
      \multicolumn{1}{c}{$\LtwoWeighted$} &
      \multicolumn{1}{c}{$\LinfWeighted$} &
      \multicolumn{1}{c}{$\relerr$} \\
    \noalign{\smallskip}\hline\noalign{\smallskip}
    \AAALin & 0.319 & 0.235 & 0.235 \\
    \AAAFull & 0.400 & 0.304 & 0.304 \\
    \AAAFullNL & 0.384 & 0.285 & 0.285 \\
    \noalign{\smallskip}\hline\noalign{\smallskip}
    \end{tabular}
\end{table}

\begin{figure}[t]
  \centering
  \begin{subfigure}[b]{.49\linewidth}
    \centering
  \tikzexternalenable%
  \tikzsetnextfilename{resp_gyro}%
  \begin{tikzpicture}[font = \plotfontsize]
  \pgfplotstableread{graphics/data/gyroY_response.csv}\tableRESP

  \begin{loglogaxis}[
    scale only axis,
    width              = .7\linewidth,
    height             = .4\linewidth,
    xmin               = 1e+3,
    xmax               = 2.8e+5,
    ymin               = 9e-7,
    ymax               = 5e-3,
    xminorticks        = true,
    yminorticks        = true,
    scaled x ticks     = false,
    clip mode          = individual,
    xlabel             = {frequency $\omega$ (rad/s)},
    ylabel             = {magnitude $\lvert H(\omega \imunit) \rvert$},
    xlabel style       = {yshift = .3em},
    ylabel style       = {yshift = -.3em},
    x tick label style = {/pgf/number format/1000 sep={\,}},
    y tick label style = {/pgf/number format/1000 sep={\,}}
  ]
    \addplot[trueData] table[x = mu, y = g]{\tableRESP};
    \addplot[AAAResponse] table[x = mu, y = AAA]{\tableRESP};
    \addplot[LSOAAAResponse] table[x = mu, y = SOLAAA]{\tableRESP};
    \addplot[SOAAAResponse] table[x = mu, y = SOAAA]{\tableRESP};
    \addplot[NLSOAAAResponse] table[x = mu, y = SONLAAA]{\tableRESP};
    \addplot[AAA2kResponse] table[x = mu, y = AAA2k]{\tableRESP};
  \end{loglogaxis}
\end{tikzpicture}%
  \tikzexternaldisable%

    \caption{Frequency response.}
    \label{fig:resp_gyro}
  \end{subfigure}%
  \hfill%
  \begin{subfigure}[b]{.49\linewidth}
    \centering
  \tikzexternalenable%
  \tikzsetnextfilename{errPerFreq_gyro}%
  \begin{tikzpicture}[font = \plotfontsize]
  \pgfplotstableread{graphics/data/gyroY_error.csv}\tableERR

  \begin{loglogaxis}[
    scale only axis,
    width              = .7\linewidth,
    height             = .4\linewidth,
    xmin               = 1e+3,
    xmax               = 2.8e+5,
    ymin               = 1e-12,
    ymax               = 1e-3,
    xminorticks        = true,
    yminorticks        = true,
    scaled x ticks     = false,
    clip mode          = individual,
    xlabel             = {frequency $\omega$ (rad/s)},
    ylabel             = {pointwise error $\ekResults$},
    xlabel style       = {yshift = .3em},
    ylabel style       = {yshift = -.3em},
    x tick label style = {/pgf/number format/1000 sep={\,}},
    y tick label style = {/pgf/number format/1000 sep={\,}}
  ]
    \addplot[AAAResponse] table[x = mu, y = AAA]{\tableERR};
    \addplot[LSOAAAResponse] table[x = mu, y = SOLAAA]{\tableERR};
    \addplot[SOAAAResponse] table[x = mu, y = SOAAA]{\tableERR};
    \addplot[NLSOAAAResponse] table[x = mu, y = SONLAAA]{\tableERR};
    \addplot[AAA2kResponse] table[x = mu, y = AAA2k]{\tableERR};
  \end{loglogaxis}
\end{tikzpicture}%
  \tikzexternaldisable%

    \caption{Pointwise weighted error.}
    \label{fig:ptwiseErr_gyro}
  \end{subfigure}
  
  \vspace{.5\baselineskip}
  \tikzexternalenable%
  \tikzsetnextfilename{legend_freq}%
  \begin{tikzpicture}
  \begin{axis}[%
    hide axis,
    scale only axis,
    width  = 1cm,
    height = 1cm,
    xmin   = 0,
    xmax   = 1,
    ymin   = 0,
    ymax   = 1,
    legend columns    = -1,
    legend cell align = {left},
    legend style      = {
      at     = {(0,0)},
      anchor = center,
      /tikz/every even column/.append style = {column sep = 0.4cm}}
  ]

    \addplot[trueData] coordinates {(0, 0)};
    \addlegendentry{Original data}

    \addlegendimage{AAAResponse}
    \addlegendentry{\AAA}

    \addlegendimage{LSOAAAResponse}
    \addlegendentry{\AAALin}

    \addlegendimage{SOAAAResponse}
    \addlegendentry{\AAAFull}

    \addlegendimage{NLSOAAAResponse}
    \addlegendentry{\AAAFullNL}

    \addlegendimage{AAA2kResponse}
    \addlegendentry{\AAAtwo}
  \end{axis}
\end{tikzpicture}%
  \tikzexternaldisable%

  \caption{Frequency response and pointwise weighted errors for the gyroscope
    example with $\maxOrder = 22$ models.
    The proposed second-order AAA methods can accurately reproduce the
    given data while enforcing the desired system structure.}
  \label{fig:RespAndErrVsFreq_gyro}
\end{figure}
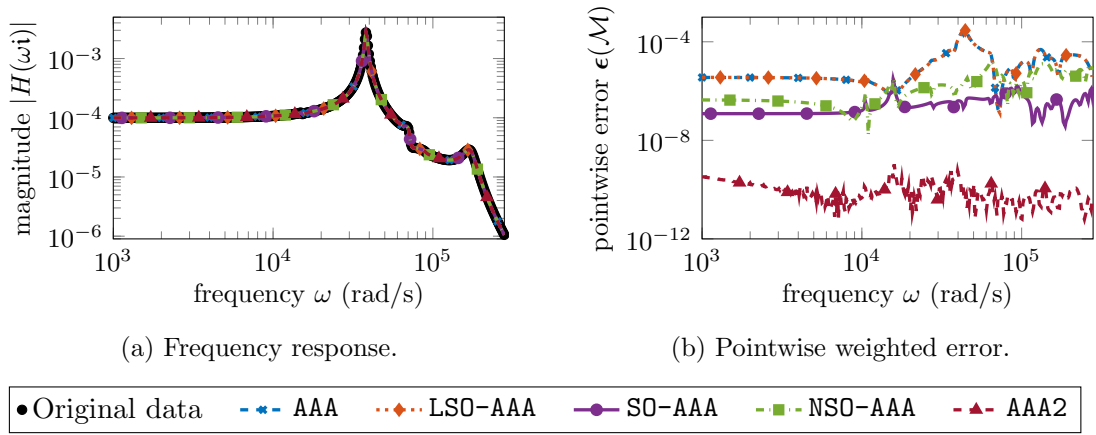

Finally, the quality of the models of order $\maxOrder = 22$ constructed by the
different methods is shown in form of the frequency response and error in
\Cref{fig:RespAndErrVsFreq_gyro}.
In the error plot  \Cref{fig:ptwiseErr_gyro}, we observe that in this example,
the two methods \AAAFull{} and \AAAtwo{} result in reasonably consistent errors,
while \AAAFullNL{} has a slight increase in error with frequency.
As in the previous example, the models constructed by the linear methods \AAA{}
and \AAALin{} have errors with a higher frequency dependence.
\AAAFullNL{}'s struggle in the high frequency range supports the hypothesis
that the method may be plagued by numerical errors due to the large
frequency values $\lvert \mu_{i} \rvert$ and the small transfer function
values $\lvert g_{i} \rvert$.
\AAAFull{}'s comparatively excellent performance in this range is an
encouraging indicator for the method's practical performance.


\subsection{Acoustic cavity with poroelastic layer}%
\label{sec:poracExample}

As the final numerical example, we consider the model of an acoustic cavity
with dimensions $0.75\times 0.6 \times 0.4$\,m, where the wall opposite of the
sound source is covered by a $0.05$\,m layer of poroelastic coating acting as a
sound absorber.
The model's damping is frequency dependent, leading to a frequency dependent
transfer function of the general form
\begin{equation} \label{eqn:cavityTF}
  H(s) = \mc^{\trans} \left( s^{2} \mM + \mK +
    \sum\limits_{i = 1}^{\ell} \phi_{i}(s) \mG_{i} \right)^{-1} \mb.
\end{equation}
The model was originally introduced in~\cite{RumGD14}, and it was implemented
in~\cite{AumW23} using a finite element discretization of order
$n = 386\,076$.
Similar to the example in \Cref{sec:results_sandwich}, we can see
how~\cref{eqn:cavityTF} resembles parts of the second-order transfer function
structure~\cref{eqn:MechTransfer} so that we expect to achieve good
approximations using our proposed methods.

For the data, we sample the transfer function~\cref{eqn:cavityTF} using
\begin{equation}
    \mu_{i} = \imunit\omega_{i}
    \quad\text{for}\quad
    \omega_{i} = 2\pi(i + 100),
    \quad\text{with}\quad
    i = 0, 1, \ldots, 900,
\end{equation}
resulting in $901$ frequency samples in the interval $[100, 1000]$\,Hz.
Due to the acoustic nature of this model, the underlying matrices are complex. 
Thus, in contrast to both previous examples, we do not restrict our our data
driven models to be real.
Additionally, we do not weigh the given data for this example so that we have
$\eta_{i} = 1$.
We seek a \AAAFull{} to fit the data with a $\LtwoWeighted$ error of less
than $0.5\%$, which results in an order-$25$ model.
For the comparison, we set $\maxOrder = 25$ to compute approximations with all
other AAA methods.
We examine the convergence of the methods in \Cref{fig:errAndOptFun_porac}.

\begin{figure}[t]
  \centering
  \begin{subfigure}[b]{.49\linewidth}
    \centering
  \tikzexternalenable%
  \tikzsetnextfilename{errorPerOrder_poro}%
  \begin{tikzpicture}[font = \plotfontsize]
  \pgfplotstableread{graphics/data/porac_L2.csv}\tableERR

  \begin{semilogyaxis}[
    scale only axis,
    width              = .73\linewidth,
    height             = .4\linewidth,
    xmin               = 1,
    xmax               = 26,
    xminorticks        = false,
    yminorticks        = true,
    scaled x ticks     = false,
    xlabel             = {model order $k$},
    ylabel             = {relative error $\LtwoWeighted$},
    xlabel style       = {yshift = .3em},
    ylabel style       = {yshift = -.3em},
    x tick label style = {/pgf/number format/1000 sep={\,}},
    y tick label style = {/pgf/number format/1000 sep={\,}}
  ]
    \addplot[AAAConverge] table[x = k, y = AAA]{\tableERR};
    \addplot[LSOAAAConverge] table[x = k, y = SOLAAA]{\tableERR};
    \addplot[SOAAAConverge] table[x = k, y = SOAAA]{\tableERR};
    \addplot[NLSOAAAConverge] table[x = k, y = SONLAAA]{\tableERR};
    \addplot[AAA2kConverge] table[x = k, y = AAA2k]{\tableERR};
  \end{semilogyaxis}
\end{tikzpicture}%
  \tikzexternaldisable%

    \caption{$\LtwoWeighted$ error}
    \label{fig:err_porac}
  \end{subfigure}%
  \hfill%
  \begin{subfigure}[b]{.49\linewidth}
    \centering
  \tikzexternalenable%
  \tikzsetnextfilename{optFun_porac}%
  \begin{tikzpicture}
  \pgfplotstableread{graphics/data/porac_optFun.csv}\tableFNC

  \begin{semilogyaxis}[
    scale only axis,
    width              = .73\linewidth,
    height             = .4\linewidth,
    xmin               = 1,
    xmax               = 26,
    xminorticks        = false,
    yminorticks        = true,
    scaled x ticks     = false,
    xlabel             = {model order $k$},
    ylabel             = {Optim. func. value},
    xlabel style       = {yshift = .3em},
    ylabel style       = {yshift = -.3em},
    x tick label style = {/pgf/number format/1000 sep={\,}},
    y tick label style = {/pgf/number format/1000 sep={\,}}
  ]

    \addplot[AAAConverge] table[x = k, y = AAA]{\tableFNC};
    \addplot[LSOAAAConverge] table[x = k, y = SOLAAA]{\tableFNC};
    \addplot[SOAAAConverge] table[x = k, y = SOAAA]{\tableFNC};
    \addplot[NLSOAAAConverge] table[x = k, y = SONLAAA]{\tableFNC};
    \addplot[AAA2kConverge] table[x = k, y = AAA2k]{\tableFNC};
  \end{semilogyaxis}
\end{tikzpicture}%
  \tikzexternaldisable%

    \caption{Optimization function}
    \label{fig:optFun_porac}
  \end{subfigure}

  \vspace{.5\baselineskip}
  \tikzexternalenable%
  \tikzsetnextfilename{legend_conv}%
  \begin{tikzpicture}
  \begin{axis}[%
    hide axis,
    scale only axis,
    width  = 1cm,
    height = 1cm,
    xmin   = 0,
    xmax   = 1,
    ymin   = 0,
    ymax   = 1,
    legend columns    = -1,
    legend cell align = {left},
    legend style      = {
      at     = {(0,0)},
      anchor = center,
      /tikz/every even column/.append style = {column sep = 0.4cm}}
  ]
  
    \addlegendimage{AAAConverge} coordinates {(0, 0)};
    \addlegendentry{\AAA}

    \addlegendimage{LSOAAAConverge}
    \addlegendentry{\AAALin}

    \addlegendimage{SOAAAConverge}
    \addlegendentry{\AAAFull}

    \addlegendimage{NLSOAAAConverge}
    \addlegendentry{\AAAFullNL}

    \addlegendimage{AAA2kConverge}
    \addlegendentry{\AAAtwo}
  \end{axis}
\end{tikzpicture}%
  \tikzexternaldisable%

  \caption{Convergence behavior of the different methods for the acoustic
    cavity example in terms of the relative weighted $\Ltwo$ approximation
    errors and the optimization function values.
    All three methods, \AAAFull{}, \AAAFullNL{} and \AAAtwo{}, perform very
    similar, while \AAALin{} and \AAA{} are nearly identical.}
  \label{fig:errAndOptFun_porac}
\end{figure}
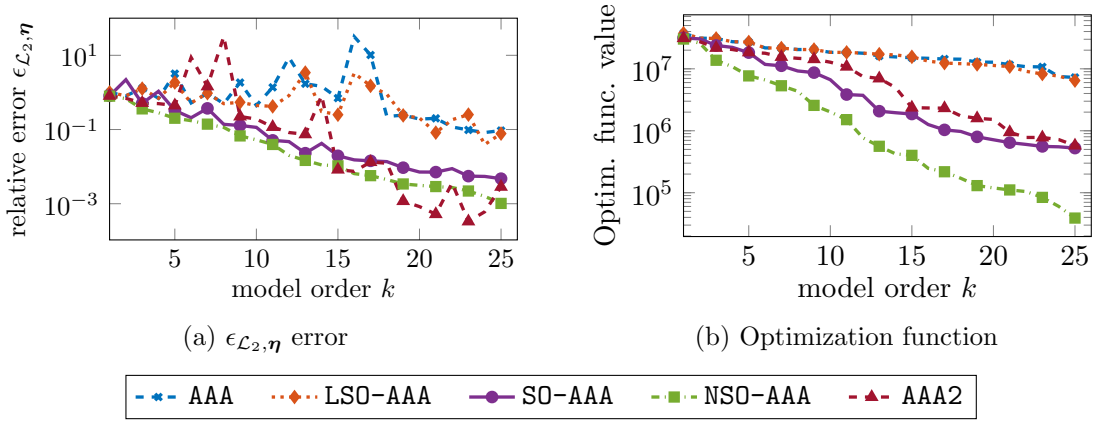

\Cref{fig:err_porac} illustrates that in this example, our second-order methods
are capable of outperforming \AAA{} even with twice the order of approximation
(\AAAtwo{}).
In particular, \AAAFullNL{} performs significantly better than \AAAtwo{} up to
$k = 15$, with \AAAFull{} in a similar region.
We also observe a more severe instance of the lack of corelation between the
optimization function values displayed in \Cref{fig:optFun_porac} and the
true approximation error in \Cref{fig:err_porac}.
In this example, we see large error increases for each of the fully linear
methods (\AAA, \AAALin and \AAAtwo) at several values of $k$, even when the
associated optimization function value decreases.
The convergence of \AAAFull{} is also non-monotonic in this example, though
much less severely than for the fully linear methods.
\AAAFullNL{} is the only method able to monotonically decrease the
$\LtwoWeighted$ error at each value of $k$ due to directly optimizing the true
nonlinear error.
These observations are also reflected in the MORscores presented in
\Cref{tab:MORscores_porac}.

\begin{table}[t]
    \centering
    \caption{MORscores of the second-order methods for the acoustic cavity
      example with minimum attainable tolerance $\epsilon_{\min} = 10^{-8}$.
      In this example, \AAAFullNL{} is the best performing method followed by
      \AAAFull{} and then \AAALin{}.}
    \label{tab:MORscores_porac}

    \vspace{.5\baselineskip}
    \begin{tabular}{lrrr}
    \hline\noalign{\smallskip}
    \multicolumn{1}{c}{\textbf{Algorithm}} &
      \multicolumn{1}{c}{$\LtwoWeighted$} &
      \multicolumn{1}{c}{$\LinfWeighted$} &
      \multicolumn{1}{c}{$\relerr$} \\
    \noalign{\smallskip}\hline\noalign{\smallskip}
    \AAALin & 0.044 & 0.043 & 0.000 \\
    \AAAFull & 0.158 & 0.170 & 0.038 \\
    \AAAFullNL & 0.198 & 0.215 & 0.050 \\
    \noalign{\smallskip}\hline\noalign{\smallskip}
    \end{tabular}
\end{table}

The frequency responses and pointwise weighted errors of the models of size
$\maxOrder = 25$ are presented in \Cref{fig:RespAndErrVsFreq_porac}.
In this example, we notice visible deviations of the frequency responses of the
linear methods from the given data, while the second-order methods that
utilize nonlinear optimization are indistinguishable from the data.
As in \Cref{sec:results_sandwich}, we observe that the error of the
\AAAFullNL{} model appears to have only a small dependence on frequency, while
each method that optimizes an approximation of the true error has large
variations in their errors.
These results indicate that \AAAFullNL{}'s failure to provide highly accurate
approximations in the gyroscope example (\Cref{sec:results_gyro}) is likely
due to the large variations in the magnitude of the transfer function values
because both the poroacoustic and sandwich beam examples have significantly less
variations in the magnitude of their frequency responses.
    
\begin{figure}[t]
  \centering
  \begin{subfigure}[b]{.49\linewidth}
    \centering
  \tikzexternalenable%
  \tikzsetnextfilename{resp_porac}%
  \begin{tikzpicture}[font = \plotfontsize]
  \pgfplotstableread{graphics/data/porac_response.csv}\tableRESP

  \begin{loglogaxis}[
    scale only axis,
    width              = .7\linewidth,
    height             = .4\linewidth,
    xmin               = 628.31,
    xmax               = 6283.19,
    ymin               = 5e+4,
    ymax               = 1e+7,
    xminorticks        = true,
    yminorticks        = true,
    scaled x ticks     = false,
    clip mode          = individual,
    xlabel             = {frequency $\omega$ (rad/s)},
    ylabel             = {magnitude $\lvert H(\omega \imunit) \rvert$},
    xlabel style       = {yshift = .3em},
    ylabel style       = {yshift = -.3em},
    x tick label style = {/pgf/number format/1000 sep={\,}},
    y tick label style = {/pgf/number format/1000 sep={\,}}
  ]
    \addplot[trueData] table[x = mu, y = g]{\tableRESP};
    \addplot[AAAResponse] table[x = mu, y = AAA]{\tableRESP};
    \addplot[LSOAAAResponse] table[x = mu, y = SOLAAA]{\tableRESP};
    \addplot[SOAAAResponse] table[x = mu, y = SOAAA]{\tableRESP};
    \addplot[NLSOAAAResponse] table[x = mu, y = SONLAAA]{\tableRESP};
    \addplot[AAA2kResponse] table[x = mu, y = AAA2k]{\tableRESP};
  \end{loglogaxis}
\end{tikzpicture}%
  \tikzexternaldisable%

    \caption{Frequency response}
    \label{fig:resp_porac}
  \end{subfigure}%
  \hfill%
  \begin{subfigure}[b]{.49\linewidth}
    \centering
  \tikzexternalenable%
  \tikzsetnextfilename{errPerFreq_porac}%
  \begin{tikzpicture}[font = \plotfontsize]
  \pgfplotstableread{graphics/data/porac_error.csv}\tableERR

  \begin{loglogaxis}[
    scale only axis,
    width              = .7\linewidth,
    height             = .4\linewidth,
    xmin               = 628.31,
    xmax               = 6283.19,
    ymin               = 2e-3,
    ymax               = 2e+6,
    xminorticks        = true,
    yminorticks        = true,
    scaled x ticks     = false,
    clip mode          = individual,
    xlabel             = {frequency $\omega$ (rad/s)},
    ylabel             = {pointwise error $\ekResults$},
    xlabel style       = {yshift = .3em},
    ylabel style       = {yshift = -.3em},
    x tick label style = {/pgf/number format/1000 sep={\,}},
    y tick label style = {/pgf/number format/1000 sep={\,}}
  ]
    \addplot[AAAResponse] table[x = mu, y = AAA]{\tableERR};
    \addplot[LSOAAAResponse] table[x = mu, y = SOLAAA]{\tableERR};
    \addplot[SOAAAResponse] table[x = mu, y = SOAAA]{\tableERR};
    \addplot[NLSOAAAResponse] table[x = mu, y = SONLAAA]{\tableERR};
    \addplot[AAA2kResponse] table[x = mu, y = AAA2k]{\tableERR};
  \end{loglogaxis}
\end{tikzpicture}%
  \tikzexternaldisable%

    \caption{Pointwise weighted error}
    \label{fig:ptwiseErr_porac}
  \end{subfigure}

  \vspace{.5\baselineskip}
  \tikzexternalenable%
  \tikzsetnextfilename{legend_freq}%
  \begin{tikzpicture}
  \begin{axis}[%
    hide axis,
    scale only axis,
    width  = 1cm,
    height = 1cm,
    xmin   = 0,
    xmax   = 1,
    ymin   = 0,
    ymax   = 1,
    legend columns    = -1,
    legend cell align = {left},
    legend style      = {
      at     = {(0,0)},
      anchor = center,
      /tikz/every even column/.append style = {column sep = 0.4cm}}
  ]

    \addplot[trueData] coordinates {(0, 0)};
    \addlegendentry{Original data}

    \addlegendimage{AAAResponse}
    \addlegendentry{\AAA}

    \addlegendimage{LSOAAAResponse}
    \addlegendentry{\AAALin}

    \addlegendimage{SOAAAResponse}
    \addlegendentry{\AAAFull}

    \addlegendimage{NLSOAAAResponse}
    \addlegendentry{\AAAFullNL}

    \addlegendimage{AAA2kResponse}
    \addlegendentry{\AAAtwo}
  \end{axis}
\end{tikzpicture}%
  \tikzexternaldisable%

  \caption{Frequency response and pointwise weighted errors for the acoustic
    cavity example with $\maxOrder = 25$ models.
    The proposed second-order AAA methods can accurately reproduce the
    given data while enforcing the desired system structure.}
    \label{fig:RespAndErrVsFreq_porac}
\end{figure}
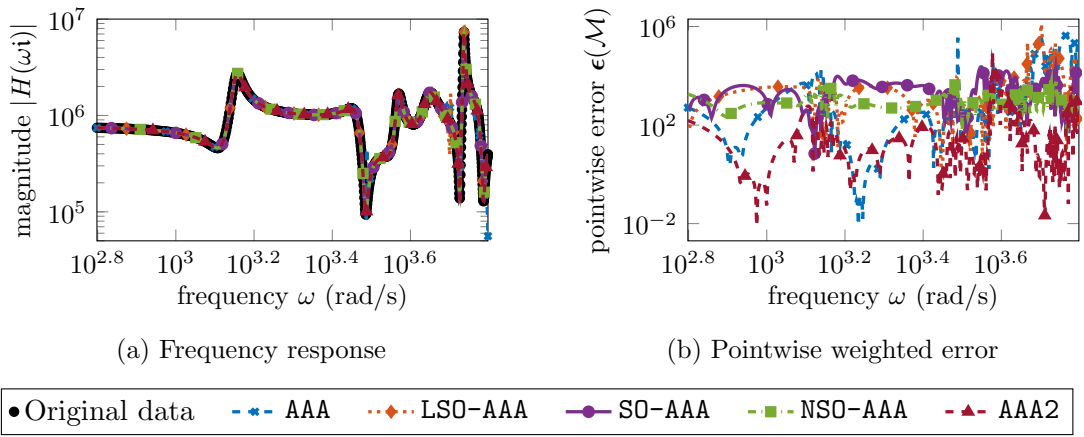


\section{Conclusions}%
\label{sec:conclusions}

In this work, we have developed three new variants of the AAA algorithm for
the data-driven modeling of second-order dynamical systems.
The first variant provides overall good practical performance by balancing
the computational costs and the resulting modeling accuracy.
The two other variants allow to either improve the computation speed of the
method  by sacrificing some modeling accuracy or to improve the modeling
accuracy by investing more computational resources.
Our theoretical analysis suggests that the performance of all three proposed
methods can be bounded by the performance of the classical unstructured AAA
algorithm.
This behavior has been illustrated in the numerical experiments.

A general drawback in AAA-like algorithms is the simplification of the cost
function in the optimization process to provide a computationally efficient
method.
Indeed, our numerical experiments have shown that depending on the numerical
example, there can be large discrepancies between the true, nonlinear
approximation error and the simplified costs that are optimized.
Further investigations into the relation between the simplified cost functions
and the true error behavior will be needed to provide insight into when AAA-like
algorithms can perform well and when not.


\section*{Acknowledgments}%
\addcontentsline{toc}{section}{Acknowledgments}

The work of Gugercin is based upon work supported by the
National Science Foundation under Grant No. AMPS-2318880.


\addcontentsline{toc}{section}{References}
\bibliographystyle{plainurl}
\bibliography{bibtex/myref}

\end{document}